\newtheorem{theorem}{Theorem}[section]
\newtheorem{lemma}[theorem]{Lemma}
\newtheorem{proposition}[theorem]{Proposition}
\newtheorem{corollary}[theorem]{Corollary}
\theoremstyle{definition}
\newtheorem{definition}[theorem]{Definition}
\newtheorem{example}[theorem]{Example}
\theoremstyle{remark}
\newtheorem{remark}[theorem]{Remark}
\numberwithin{equation}{section}
\def\neweq#1{\begin{equation}\label{#1}}
\def\endeq{\end{equation}}
\def\eq#1{(\ref{#1})}
\newcommand{\R}{{\mathbb R}}
\newcommand{\axfrac}{\frac{({\bf a}x,x)}{|x|^2}}
\newcommand{\agxfrac}{\frac{({\bf a}_g(x)x,x)}{|x|^2}}
\newcommand{\eps}{\varepsilon}
\newcommand{\g}{\gamma}
\newcommand{\esssup}{\mathop{\mathrm{ess\,sup}}}
\newcommand{\essinf}{\mathop{\mathrm{ess\,inf}}}
\newcommand{\expn}[1]{{\exp\left\{{#1}\right\}}}
\newcommand{\Env}{{\mathop{Env}}}
\newcommand{\env}{{\mathop{env}}}
\def\Eenv{\mathcal{M}}
\def\eenv{m}
\newcommand{\abs}[1]{\lvert#1\rvert}
\date{\today}
\begin{document}

\title[Semi-linear second--order non-divergence type elliptic inequalities]
{Singular solutions for second--order non-divergence type
elliptic inequalities in punctured balls}

\author{Marius Ghergu}
\address{School of Mathematical Sciences, University College Dublin, Belfield, Dublin 4, Ireland}
\email{marius.ghergu@ucd.ie}

\author{Vitali Liskevich}
\address{Department of Mathematics,
Swansea University, Swansea SA2 8PP, United Kingdom}
\email{V.A.Liskevich@swansea.ac.uk}

\author{Zeev Sobol}
\address{Department of Mathematics,
Swansea University, Swansea SA2 8PP, United Kingdom}
\email{z.sobol@swansea.ac.uk}

\subjclass{Primary 35J60, 35B33; Secondary 35B05}



\keywords{Non-divergence semi-linear equation, singular solutions,
punctured ball, critical exponent}

\begin{abstract}
We study the existence and nonexistence of positive singular solutions to
 second--order non-divergence type elliptic inequalities in the form
\[
\sum_{i,j=1}^N a_{ij}(x)\frac{\partial  ^2 u}{\partial x_i
\partial x_j}+\sum_{i=1}^N b_i(x)\frac{\partial  u}{\partial x_i} \ge K(x) u^p,
\]
$-\infty<p<\infty$, with measurable coefficients in a punctured ball $
B_R\setminus\{0\}$ of $\R^N$, $N\geq 1$. We prove the existence of
a critical value $p^*$ that separates the existence region from
non-existence. In the critical case $p=p^*$ we show that the
existence of a singular solution depends on the rate at which the
coefficients $(a_{ij})$ and $(b_i)$ stabilize at zero and we
provide some optimal conditions in this setting.

\end{abstract}

\maketitle

%
\section{Introduction and the main results}

In this paper we are concerned  with the existence and nonexistence
of positive singular solutions to semi-linear second--order non-divergence
type elliptic inequality 
$$
{ \mathscr L}u\ge K(x) u^p \quad\mbox{almost everywhere in }\:
B_R\setminus\{0\},\eqno{(1)_p}
$$
where $B_R$ is the open ball of radius $R>0$ in $\R^N$ ($N\ge 1$)
centered at the origin, $-\infty<p<\infty$,
and ${\mathscr L}$ is given by
\neweq{operator}
{\mathscr L}u=\sum_{i,j=1}^{N} a_{ij}(x)\frac{\partial^2
u}{\partial x_i\partial x_j}+\sum_{i=1}^N b_i(x)\frac{\partial
u}{\partial x_i} .
\endeq
The matrix ${\bf a}=(a_{ij}(x))_{i,j=1}^{N}\in L^\infty(B_R)$ is
a.e. symmetric and uniformly elliptic, in the sense that there
exists a constant $\nu>1$ and such that for
almost all $x\in B_R\setminus\{0\}$
\begin{equation}\label{elliptic}
\nu^{-1}|\xi|^2 \le\sum_{i,j=1}^{N}a_{ij}(x)\xi_i\xi_j\le \nu |\xi|^2
\mbox{ for all }\xi\in\R^N.
\end{equation}
The vector ${\bf b}=(b_i(x))_{i=1}^N\in L^\infty_{\mathrm{loc}}(B_R)$ is assumed to satisfy
\begin{equation}\label{bvec}
|b_i(x)|\le \frac{c}{|x|}\quad\mbox{  for almost all }x\in
B_R\setminus\{0\}\,, 1\le i\le N,
\end{equation}
with some $c>0$. Finally, the weight $K\in L^\infty_{loc}(B_R\setminus\{0\})$ satisfies $\essinf K>0$.

In this paper we are concerned with singular solutions of $(1)_p$ in the following sense.

\begin{definition}\label{defsol}
We say that $u>0$ is a solution to $(1)_p$ if there exists $R>0$
such that $u\in W^{2,N}(B_R\setminus \overline B_{\eps})$ for all $\eps>0$ and $u$ satisfies
$(1)_p$ a.e. on $B_R\setminus \{0\}$. 

A solution $u$ to $(1)_p$ is
called a {\it singular solution} if it has a singularity at the
origin in the sense that $\limsup\limits_{|x|\rightarrow 0}u(x)=\infty$.
\end{definition}

We start with the following observation, which can be readily
verified.
\begin{proposition}
  Let $u$ be a singular solution to $(1)_p$ for some $p>1\, (p<1)$. Then $v:=u^{\frac{p-1}{q-1}}$
  is a singular solution of $(1)_q$ for $1< q <p\; (p<q<1)$.
  \end{proposition}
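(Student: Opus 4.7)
Set $\alpha:=\frac{p-1}{q-1}$. In both regimes ($p>1$ with $1<q<p$, and $p<1$ with $p<q<1$) the numerator and denominator have the same sign with $|p-1|>|q-1|$, so $\alpha>1$. My plan is to compute $\mathscr{L}v$ via the chain rule, use uniform ellipticity to drop a non-negative quadratic term, insert the inequality $(1)_p$, and finally match exponents.

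Formally, on any annulus $B_R\setminus\overline B_\eps$ the function $u$ is continuous (hence bounded above and, by positivity, bounded away from $0$), and $u\in W^{2,N}$. Therefore $v=u^\alpha$ belongs to $W^{2,N}(B_R\setminus\overline B_\eps)$ as well, with
\[
\partial_i v=\alpha\, u^{\alpha-1}\partial_i u, \qquad
\partial^2_{ij} v=\alpha(\alpha-1)\,u^{\alpha-2}\,\partial_i u\,\partial_j u\;+\;\alpha\, u^{\alpha-1}\partial^2_{ij} u.
\]
Contracting with $(a_{ij})$ and $(b_i)$ gives
\[
\mathscr{L}v \;=\; \alpha(\alpha-1)\,u^{\alpha-2}\sum_{i,j}a_{ij}\,\partial_i u\,\partial_j u \;+\;\alpha\, u^{\alpha-1}\mathscr{L}u.
\]
By \eqref{elliptic} the matrix $(a_{ij})$ is positive definite, so the first term is $\ge 0$ (using $\alpha>1$, hence $\alpha(\alpha-1)>0$). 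Substituting $\mathscr{L}u\ge K(x)u^p$ from $(1)_p$ yields
\[
\mathscr{L}v \;\ge\; \alpha\,K(x)\,u^{\alpha-1+p}.
\]
The exponent was chosen precisely so that $\alpha-1+p=\alpha q$; indeed this is equivalent to $\alpha(q-1)=p-1$, which holds by the definition of $\alpha$. Since $\alpha>1$ and $K>0$, we conclude
\[
\mathscr{L}v \;\ge\; \alpha\,K(x)\,v^{\,q} \;\ge\; K(x)\,v^{\,q},
\]
so $v$ satisfies $(1)_q$.

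Finally, the singular character of $v$ follows immediately: $\alpha>0$ and $\limsup_{|x|\to 0}u(x)=\infty$ imply $\limsup_{|x|\to 0}v(x)=\infty$. The only technical point is the chain rule applied to $u^\alpha$ at the $W^{2,N}$ level, but this is standard on domains where $u$ is continuous and strictly positive (which is exactly the situation on each annulus $B_R\setminus\overline B_\eps$). There is no real obstacle; the content of the proposition is the algebraic identity $\alpha(q-1)=p-1$ together with the sign of the discarded quadratic form.
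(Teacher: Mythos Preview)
Your argument is correct and is exactly the computation the authors have in mind: the paper gives no proof of this proposition, merely stating that it ``can be readily verified.'' The chain-rule identity, the positivity of the quadratic form via~\eqref{elliptic} together with $\alpha>1$, and the algebraic relation $\alpha-1+p=\alpha q$ are precisely the intended ingredients.
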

 The above proposition allows us to  define two critical exponents
\begin{equation}
\label{crit-exp}
\begin{split}
p_* & :=\sup\{p<1:~(1)_p \mbox{ has no singular solution}\}, \\
p^* & :=\inf\{p>1:~(1)_p \mbox{ has no  singular solution}\}.
\end{split}
\end{equation}

Then
$-\infty\leq p_*\leq 1 \leq p^*\leq +\infty$ and $(1)_p$ has a  singular solution
for $p\in (p_*,p^*)$ and $(1)_p$ has no  singular solutions for $p\in (-\infty, p_*)\cup(p^*,+\infty)$.

The aim of this research is to obtain estimates on the critical exponents introduced in~\eqref{crit-exp} and to establish the existence/non-existence of a positive singular solution to $(1)_p$ for the critical $p$. We also provide some interesting examples where the critical exponents are computed explicitly.

The case ${\mathscr L}=\Delta$, namely singular solutions (sub-solutions) to the equation
\neweq{laplace}
\Delta u=u^p\quad\mbox{ in }B_R\setminus\{0\}
\endeq
has been extensively studied during recent decades (see, e.g., \cite{bp, bv, cmv, KLS, KLSU, serrin1, serrin2, serrin3, v1} and the references therein).
By now it is well known that $p_*=-\infty$ and $p^*=\frac{N}{N-2}$ for $N\ge 3$, and $p^*=+\infty$ for $N=1,2$.

On the other hand, elliptic non-divergence type equations are the subject in its own right with numerous applications in many parts of mathematics. The study of singular solutions stems from the seminal work of Gilbarg and Serrin \cite{GS} where the behavior of solutions to ${\mathscr L}u\geq 0$ around a singular point is investigated. For semi-linear problems one of the important issues is the stability of critical exponents under small perturbations of the coefficients, and the existence of a singular solution in the critical case. In Corollary~\ref{claplace} below we show that, if the coefficients of ${\mathscr L}$ stabilize at zero to that of the Laplacian, then the critical exponents remain unchanged, but the existence of a singular solution in the critical case will depend on the speed of convergence of the coefficients. Naturally, another issue is studying operators significantly different from the Laplacian, in search of new phenomena.
For instance, by Theorem \ref{generalpositive} and Theorem \ref{critic} below, the problem
\neweq{pert}
\Delta u+\beta\frac{x}{|x|^2}\cdot \nabla u\geq u^p\quad\mbox{ in }B_R\setminus \{0\}\,,
\endeq
has singular solutions for all $-\infty<p<\infty$ if $\beta\leq 2-N$ while if $\beta>2-N$ then singular solutions exist if and only if $-\infty<p<(N+\beta)/(N+\beta-2)$.

Another motivation to our study is that, with a change of the independent variable, the problem~\eqref{laplace} as well as inequality \eq{pert} take the form $(1)_p$.

In order to formulate the results we need some additional
notation. For a measurable function $f:B_R\to \mathbb{R}$ we
define its upper and lower radial envelopes as follows:
\begin{equation}\label{envelopes}
      \Env{f}(r):= \lim\limits_{\delta\to0}\esssup\limits_{\abs{|x|-r}<\delta}f(x),\quad
      \env{f}(r):= \lim\limits_{\delta\to0}\essinf\limits_{\abs{|x|-r}<\delta}f(x).
\end{equation}

Set
\begin{equation}\label{ellip-f}
\Psi(x):=\frac{\mathop{Tr}{\mathbf a}(x)+{\mathbf b}(x)\cdot { x}}{\displaystyle \axfrac}\;,\qquad
\Theta(x):= \frac{K(x)}{\displaystyle \axfrac}.
\end{equation}

The quantity $\Psi(x)$ was introduced in \cite{MeSer} in the context of second order
non-divergent elliptic operators in exterior domains. Since $\Psi\equiv N$ for ${\bf a}\equiv\mathrm{I}$ and ${\bf b}\equiv0$, the quantity $\Psi$ is called the {\it effective dimension}. In this work we show that, similar to the problems studied in \cite{KLS1,MeSer}, the asymptotic of $\Psi$ at the origin, revealed via the envelopes $\Env{\Psi}$ and $\env{\Psi}$, plays the same role in estimating the critical exponent for $(1)_p$ for a general operator ${\mathscr L}$, as the dimension $N$ does in case ${\mathscr L}=\Delta.$

\begin{remark}\label{Psi}
%
%

For a radially symmetric function $u=u(|x|)$ one has
\begin{equation}\label{rad1}
{\mathscr L} u= \axfrac
\left(u^{\prime\prime}(|x|)+\frac{\Psi(x)-1}{|x|}u^\prime(|x|)\right).
\end{equation}
Hence a singular solution $v$ to the following inequality
\begin{equation}\label{rad2}
v^{\prime\prime}-\tfrac{\Env\Psi(r)-1}{r}\big(v^\prime\big)_-
+ \tfrac{\env\Psi(r)-1}{r}\big(v^\prime\big)_+
\geq \Env\Theta(r)v^p \quad\mbox{ in } (0,R),
\end{equation}
gives rise to a radially symmetric solution $u(x)=v(|x|)$ to $(1)_p$.
Vice versa, a solution $v$ to the following inequality
\begin{equation}\label{rad3}
v^{\prime\prime}+\tfrac{\Env\Psi(r)-1}{r}\big(v^\prime\big)_+
- \tfrac{\env\Psi(r)-1}{r}\big(v^\prime\big)_-
\leq \env\Theta(r)v^p \quad\mbox{ in } (0,R),
\end{equation}
gives rise to a radially symmetric function $u(x)=v(|x|)$
satisfying ${\mathscr L} u\leq K(x) u^p$ in $B_R\setminus\{0\}$.

\end{remark}

It is easy to see that the asymptotic behavior at the origin
of $\Theta$ and $\Psi$ is invariant under orthogonal
transformations of $x$ but in general it is not invariant under
affine transformations of $x$. However, under a transformation
$g\in GL_N$, the operator ${\mathscr L}$ is transformed into a
second order operator ${\mathscr L}_g$ with $\mathbf a_g$ and
$\mathbf b_g$ replacing $\mathbf a$ and $\mathbf b$, respectively,
\[
{\bf a}_g(x):=g\,{\bf a}(g^{-1}x)\,g^\top\,,\quad {\bf b}_g(x):=g{\bf b}(g^{-1}x).
\]
We define $\Theta_g$ and $\Psi_g$ and their envelopes in the same way as in \eqref{ellip-f} and \eqref{envelopes}.
In particular,
\begin{equation}\label{PsiG}
\displaystyle
\Psi_g(x):=\frac{\mathop{Tr}{\mathbf a}_g(x)+{\mathbf b}_g(x)\cdot { x}}{\displaystyle \agxfrac}\;,\qquad
\Theta_g(x):= \frac{K(x)}{\displaystyle \agxfrac}.
\end{equation}
In the following, we shall suppress $g$ if $g=\mathrm{I}$.
For any $g\in GL_N$ we introduce the upper and lower dimensions:
\begin{equation}\label{dimensions}
\begin{aligned}
\mathcal{N}(g)&:=\limsup\limits_{r\to 0}\tfrac1{\abs{\ln r}}\int_r^R\Env\Psi_{g}(r)\frac{dr}r\,,\quad
&&\overline{\Psi}:=\inf\limits_{g\in GL_N} \mathcal{N}(g),\\
{n}(g)&:=\liminf\limits_{r\to 0}\tfrac1{\abs{\ln r}}\int_r^R\env\Psi_{g}(r)\frac{dr}r\,,\quad
&&\underline{\Psi}:=\sup\limits_{g\in GL_N} n(g).
\end{aligned}
\end{equation}


We start first with the simpler case $K\in L^\infty (B_R)$. In this setting we have:

\begin{theorem}\label{generalpositive}
Assume $K\in L^\infty (B_R)$. Then $p_*=-\infty$ and
\neweq{ppqqw0}
1+\frac2{(\overline{\Psi}-2)_+}\leq p^*\leq
1+\frac2{(\underline{\Psi}-2)_+},
\endeq
with the convention $1/0=+\infty$.
\end{theorem}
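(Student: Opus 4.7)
The proof decomposes into three statements. First, $p_*=-\infty$: the ellipticity hypothesis and the bound $|b_i|\le c/|x|$ imply that $\Psi$ is uniformly bounded on $B_R\setminus\{0\}$, say $M:=\esssup\Env\Psi<\infty$. For any $\alpha>M-2$, the radial ansatz $v(r)=r^{-\alpha}$ gives $v''+\tfrac{\Env\Psi-1}{r}v'=\alpha(\alpha+2-\Env\Psi(r))r^{-\alpha-2}\ge\alpha(\alpha+2-M)r^{-\alpha-2}$, which dominates the bounded quantity $\Env\Theta\cdot r^{-\alpha p}$ as $r\to0$ for every $p<1$. By Remark \ref{Psi} this yields a radial singular solution of $(1)_p$ for every $p<1$, hence $p_*=-\infty$.

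For the lower bound $p^*\ge 1+2/(\overline\Psi-2)_+$, fix $p<1+2/(\overline\Psi-2)_+$ and small parameters $\eps,\delta>0$ (the nontrivial case being $\overline\Psi>2$). Choose $g\in GL_N$ with $\mathcal N(g)\le\overline\Psi+\eps$ and perform the change of variables $x\mapsto gx$; this reduces the problem to one with coefficients $\mathbf a_g,\mathbf b_g$, so we work with $\Psi_g,\Theta_g$. In the logarithmic coordinate $\rho=-\ln r$, writing $\tilde\Psi(\rho):=\Env\Psi_g(e^{-\rho})$, the radial inequality \eqref{rad2} becomes
$$V''+(2-\tilde\Psi(\rho))V'\ge\tilde\Theta(\rho)\,e^{-2\rho}V^p.$$
The plan is to take
$$V(\rho):=A+B\int_0^\rho\exp\!\Big(\int_0^\sigma(\tilde\Psi(\tau)-2+\delta)\,d\tau\Big)\,d\sigma,$$
for which $V''+(2-\tilde\Psi)V'=\delta V'$ by direct differentiation. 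The bound $\mathcal N(g)\le\overline\Psi+\eps$ gives $\int_0^\sigma(\tilde\Psi-2)\,d\tau\le(\overline\Psi-2+2\eps)\sigma$ for $\sigma$ large, so $V(\rho)$ grows at most exponentially with rate $\overline\Psi-2+O(\eps+\delta)$; conversely, the limsup in the definition of $\mathcal N(g)$ forces $V\to\infty$ along a subsequence. Since $(p-1)(\overline\Psi-2)<2$, the ratio $\tilde\Theta e^{-2\rho}V^{p-1}$ then decays, and after choosing $B$ small and $A$ large the pointwise inequality $\delta V'\ge\tilde\Theta e^{-2\rho}V^p$ is satisfied on a neighborhood of $+\infty$. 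Converting back via Remark \ref{Psi} produces the desired radial singular solution; the boundary case $\overline\Psi\le 2$ (bound equal to $+\infty$) is handled by a parallel construction with sub-exponential $V(\rho)\sim\rho^a$ growth.

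For the upper bound $p^*\le 1+2/(\underline\Psi-2)_+$, nontrivial only if $\underline\Psi>2$, take $p>1+2/(\underline\Psi-2)$ and assume for contradiction that a singular solution $u$ exists. Picking $g\in GL_N$ with $n(g)\ge\underline\Psi-\eps$ and replacing $\Psi$ by $\Psi_g$, form the spherical infimum $m(r):=\essinf_{|x|=r}u(x)$; a standard Harnack-type inequality for non-divergence elliptic equations applied on concentric annuli guarantees $m(r)\to\infty$. A barrier-comparison argument using radial solutions of the linearized equation $w''+(2-\env\Psi_g(e^{-\rho}))w'=0$ then produces, for $M(\rho):=m(e^{-\rho})$, a radial differential inequality in the shape of \eqref{rad3}, schematically
$$M''+(2-\env\Psi_g(e^{-\rho}))\,M'\le c\,M^p\,e^{-2\rho}.$$
A Keller--Osserman integration along $\rho\to\infty$, combined with the averaged lower bound $\tfrac1\rho\int_0^\rho\env\Psi_g(e^{-\tau})d\tau\ge\underline\Psi-2\eps$ and the hypothesis $(p-1)(\underline\Psi-2)>2$, forces mutually incompatible upper and lower asymptotic estimates for $M(\rho)$, the contradiction sought.

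The main obstacle is the upper-bound argument. Without a divergence structure, integration-by-parts tools are unavailable; the reduction from the non-radial inequality $\mathscr L u\ge Ku^p$ to a radial ODE requires delicate barrier comparison with the variable-coefficient operator $w''+(2-\env\Psi_g(e^{-\rho}))w'$, and the Keller--Osserman bookkeeping must be performed on \emph{averaged} quantities $\tfrac1\rho\int_0^\rho\env\Psi_g(e^{-\tau})d\tau$ rather than on pointwise values of $\env\Psi_g$, matching the definitions in \eqref{dimensions}. A secondary subtlety is verifying in the existence argument that the explicit $V$ genuinely diverges at $r=0$, which relies precisely on the limsup built into $\mathcal N(g)$.
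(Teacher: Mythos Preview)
Your $p_*=-\infty$ argument is essentially the paper's (Lemma~\ref{lower-critical}). For the lower bound on $p^*$ your direct construction of $V$ diverges from the paper, which instead verifies the integral criterion~\eqref{exist_criterion} of Theorem~\ref{main_technical} (itself proved by a shooting argument for the Emden--Fowler problem~\eqref{aaw4}). Your construction has a gap: to check $\delta V'\geq\tilde\Theta e^{-2\rho}V^p$ pointwise you need a \emph{lower} bound on $V'(\rho)=Be^{I(\rho)}$, but the limsup defining $\mathcal N(g)$ only gives an \emph{upper} bound on $I(\rho)$. If $\Env\Psi_g$ dips well below $2$ on long stretches, $I(\rho)$ can sit far beneath its running maximum, so $V'$ is small while $V$ (which records the integral of $e^I$) has already grown large; then $V^p/V'$ is not controlled by $e^{2\rho}$.

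The serious problem is the upper bound. The Harnack step fails: $u$ satisfies only $\mathscr L u\geq Ku^p$, so it is merely a \emph{subsolution} of $(\mathscr L-Ku^{p-1})w=0$, and subsolutions of non-divergence equations do not obey a two-sided Harnack inequality. There is thus no reason for $m(r)=\essinf_{|x|=r}u(x)\to\infty$ (the definition of singular solution asks only $\limsup u=\infty$). The next step, deriving a radial differential inequality for $m$, is likewise unjustified: the spherical infimum of a subsolution satisfies no ODE in general. The paper's route is structurally different. One builds a radial barrier \emph{from above}: a solution $v$ of~\eqref{rad3} with final data $v(R)=M,\ v'(R)=\lambda$, so that $\tilde v(x)=v(|x|)$ satisfies $\mathscr L\tilde v\leq K\tilde v^p$. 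Theorem~\ref{emden-fowler-thrm}(ii) shows that, under~\eqref{non-exist_criterion}, $v$ blows up at some $R'\in(0,R)$; the maximum principle (Proposition~\ref{nonex-0}) then traps $u\leq\tilde v$, contradicting singularity. The Keller--Osserman bound and the Fuchsian Harnack inequality do enter, but they are applied to the barrier $v$, which is a genuine \emph{solution} of an equation, inside the proof of Theorem~\ref{emden-fowler-thrm}(ii)---not to $u$.
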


\begin{remark}\label{simpler_Psi}
Note the following estimates:
\begin{equation}\label{simpler_Psi_eq}
\begin{split}
\overline{\Psi}\leq \overline{\overline{\Psi}}:= \inf\limits_{g\in GL_N}\lim\limits_{r\to0}\esssup\limits_{x\in B_r}\Psi_g(x),\\
\underline{\Psi}\geq \underline{\underline{\Psi}}:= \sup\limits_{g\in GL_N}\lim\limits_{r\to0}\essinf\limits_{x\in B_r}\Psi_g(x).
\end{split}
\end{equation}
So in~\eqref{ppqqw0} one can replace $\overline{\Psi}$ and
$\underline{\Psi}$ with $\overline{\overline{\Psi}}$ and $\underline{\underline{\Psi}}$, respectively, thus obtaining a weaker estimate with more transparent bounds.
\end{remark}

As a simple consequence of Theorem~\ref{generalpositive}, we
obtain that, for the coefficients stabilizing at zero, the
critical exponents coincide with those for the Laplacian.
\begin{corollary}\label{claplace}
Let $N\geq3$ and $K\in L^\infty (B_R)$. Assume that there exists a matrix $\mathbf{a}_0$ such that
\[
\lim\limits_{r\to0}\esssup_{x\in B_{r}}\Big(\big|\mathbf{a}(x) -  \mathbf{a}_0\big| + |x|\big|\mathbf{b}(x)\big| \Big)=0.
\]
Then $p_*=-\infty$ and $p^*=N/(N-2)$.
\end{corollary}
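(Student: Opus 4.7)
The plan is to apply Theorem~\ref{generalpositive} after choosing a single affine change of coordinates $g \in GL_N$ that turns the limit matrix $\mathbf{a}_0$ into the identity. First, passing to the (uniform) limit in the ellipticity inequality \eqref{elliptic} shows that $\mathbf{a}_0$ is symmetric and positive definite, so $g := \mathbf{a}_0^{-1/2}\in GL_N$ is well defined and satisfies $g\,\mathbf{a}_0\,g^\top = \mathrm{I}$.

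Next, I would verify that $\Psi_g(x) \to N$ uniformly as $|x| \to 0$. Since $x \mapsto g^{-1}x$ is bi-Lipschitz, the hypothesis transfers to the transformed coefficients: as $|x|\to 0$,
\[
\esssup_{B_r}\bigl|\mathbf{a}_g - \mathrm{I}\bigr| + \esssup_{B_r} |x|\,\bigl|\mathbf{b}_g(x)\bigr| \longrightarrow 0.
\]
Reading off \eqref{PsiG}, the numerator of $\Psi_g(x)$ tends uniformly to $\mathrm{Tr}\,\mathrm{I} = N$ and the denominator $(\mathbf{a}_g(x)x,x)/|x|^2$ tends uniformly to $1$. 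Consequently
\[
\lim_{r\to 0}\esssup_{x\in B_r}\Psi_g(x) \;=\; \lim_{r\to 0}\essinf_{x\in B_r}\Psi_g(x) \;=\; N.
\]
Using this $g$ in the definitions of $\overline{\overline{\Psi}}$ and $\underline{\underline{\Psi}}$ from Remark~\ref{simpler_Psi}, we obtain $\overline{\overline{\Psi}}\le N$ and $\underline{\underline{\Psi}}\ge N$, hence $\overline{\Psi}\le N\le\underline{\Psi}$.

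Finally, inserting these bounds into \eqref{ppqqw0} and using $N\ge 3$ (so the positive parts are harmless) yields
\[
\frac{N}{N-2} \;=\; 1+\frac{2}{N-2} \;\le\; 1+\frac{2}{(\overline{\Psi}-2)_+} \;\le\; p^* \;\le\; 1+\frac{2}{(\underline{\Psi}-2)_+} \;\le\; 1+\frac{2}{N-2} \;=\; \frac{N}{N-2},
\]
so $p^* = N/(N-2)$, while $p_* = -\infty$ is immediate from Theorem~\ref{generalpositive}. The only real content is the observation that positive definiteness of $\mathbf{a}_0$ lets one pick a single $g$ that simultaneously handles both bounds in \eqref{ppqqw0}; the remaining estimates are a routine pointwise computation once the uniform convergence $\mathbf{a}_g\to\mathrm{I}$, $|x|\,\mathbf{b}_g(x)\to 0$ is in hand.
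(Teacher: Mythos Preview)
Your proposal is correct and is precisely the argument the paper has in mind: the corollary is stated as a ``simple consequence of Theorem~\ref{generalpositive}'', and your choice $g=\mathbf{a}_0^{-1/2}$ together with the uniform convergence $\Psi_g\to N$ is exactly the intended way to feed the stabilization hypothesis into~\eqref{ppqqw0} via Remark~\ref{simpler_Psi}. The details you supply (positive definiteness of $\mathbf{a}_0$, the bi-Lipschitz transfer of the hypothesis to $\mathbf{a}_g,\mathbf{b}_g$, and the monotonicity of $t\mapsto 1+2/(t-2)_+$) are the routine verifications the paper omits.
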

Theorem~\ref{generalpositive} leaves unclarified the case of $\Psi$ oscillating
around 2. The following result shows that in this framework the critical exponent
$p^*$ is highly unstable.
\begin{theorem}\label{unstable}
Let $K\in L^\infty(B_R)$ and $1< q\leq\infty$. Then, there exist $\mathbf{a}$ and $\mathbf{b}$
satisfying \eq{elliptic} and \eq{bvec}, respectively,  such that
\[
p^*=q
\]
and $\Psi_g$ oscillates around 2 for all $g\in GL_N$, in the sense that
\[
\underline{\underline{\Psi}}<2\leq \overline{\overline{\Psi}}.
\]

\end{theorem}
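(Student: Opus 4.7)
My plan is to build radially anisotropic but $|x|$-structured coefficients whose effective dimension $\Psi$ is a two-level step function on a logarithmic scale straddling $2$, with equal-weighted mean $\tfrac{2q}{q-1}$. Assume $N\ge 2$ (the case $N=1$ is analogous using a radial drift instead of an anisotropic matrix). Take $\mathbf b\equiv 0$ and
\[
\mathbf a(x)=\alpha(|x|)P_x+\beta(|x|)(I-P_x),\qquad P_x:=xx^\top/|x|^2.
\]
Pick $\Psi_-\in(1,2)$ and $\Psi_+\in(2,\infty)$ with $\tfrac12(\Psi_-+\Psi_+)=\tfrac{2q}{q-1}$ (read as $2$ when $q=\infty$), put $\rho_\pm:=(\Psi_\pm-1)/(N-1)$, and partition $(0,1]$ into $J_-:=\bigcup_k[e^{-(2k+1)},e^{-2k}]$ and $J_+:=\bigcup_k[e^{-(2k+2)},e^{-(2k+1)}]$. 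Take $\alpha\equiv 1$ and $\beta(r)=\rho_\pm$ on $J_\pm$. Then $\mathbf a$ is measurable, symmetric and uniformly elliptic, and a direct computation gives $\Psi(x)=1+(N-1)\beta(|x|)/\alpha(|x|)$, which equals $\Psi_\mp$ on $J_\mp$.

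For $g=\mathrm I$ the function $\Psi$ depends only on $|x|$, its envelopes coincide with it a.e., and equal logarithmic density of $J_\pm$ yields
\[
\frac{1}{|\ln r|}\int_r^R\Psi(s)\,\frac{ds}{s}\;\longrightarrow\;\frac{\Psi_-+\Psi_+}{2}\;=\;\frac{2q}{q-1}\qquad(r\to 0).
\]
Hence $\mathcal N(\mathrm I)=n(\mathrm I)=\tfrac{2q}{q-1}$; combining with $\overline\Psi\le\mathcal N(\mathrm I)$ and $\underline\Psi\ge n(\mathrm I)$ from \eqref{dimensions} and plugging into \eqref{ppqqw0} sandwiches $p^*=q$ (with the convention $1+2/0=\infty$ handling $q=\infty$).

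It remains to check oscillation of $\Psi_g$ around $2$ for every $g\in GL_N$. Put $M:=gg^\top$, $r'(x):=|g^{-1}x|$, $\sigma(x):=|x|^2/(M^{-1}x,x)$, $\tau(x):=(Mx,x)/|x|^2$; substituting into \eqref{PsiG} gives
\[
\Psi_g(x)\;=\;\frac{\sigma(x)+\rho(r'(x))\,[\mathop{Tr}M-\sigma(x)]}{\sigma(x)+\rho(r'(x))\,[\tau(x)-\sigma(x)]},\qquad\rho:=\beta/\alpha.
\]
The crucial simplification: at $x=tv$ with $Mv=\lambda v$, $|v|=1$, one has $\sigma(x)=\tau(x)=\lambda$, whence $\Psi_g(x)=1+\rho(r'(x))\,(\mathop{Tr}M-\lambda)/\lambda$. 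The elementary bounds $(\mathop{Tr}M-\lambda_{\max}(M))/\lambda_{\max}(M)\le N-1\le(\mathop{Tr}M-\lambda_{\min}(M))/\lambda_{\min}(M)$ then yield $\Psi_g(x)\le\Psi_-<2$ along the $\lambda_{\max}(M)$-eigenray when $r'(x)\in J_-$, and $\Psi_g(x)\ge\Psi_+>2$ along the $\lambda_{\min}(M)$-eigenray when $r'(x)\in J_+$. By continuity these inequalities persist on open conical neighborhoods of the respective eigenrays; intersecting with the preimages of $J_\pm$ produces positive-measure subsets of every $B_r$, so $\essinf_{B_r}\Psi_g\le\Psi_-$ and $\esssup_{B_r}\Psi_g\ge\Psi_+$ for every $g$ and every small $r$. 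Taking limits and then $\sup_g$/$\inf_g$ gives $\underline{\underline\Psi}\le\Psi_-<2<\Psi_+\le\overline{\overline\Psi}$. The main obstacle is exactly here: the transformation $g$ entangles the radial and angular dependencies of $\mathbf a_g$, and it is the eigenray identity $\sigma=\tau$ that collapses this entanglement and reduces the oscillation claim to a scale-free eigenvalue inequality that is uniform in $g$.
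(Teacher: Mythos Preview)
Your construction is the same as the paper's: $\mathbf b\equiv0$ and a Gilbarg--Serrin type matrix with a log-periodic coefficient, so that $\Psi$ is a periodic function of $\ln(1/|x|)$ with average $\tfrac{2q}{q-1}$; your two-level step is a concrete instance of the paper's unspecified $1$-periodic $\phi$ in~\eqref{periodic}.

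Two points of packaging differ and one deserves a caution. For $p^*=q$ the paper plugs $\Eenv\sim\eenv\sim r^{-2q/(q-1)}$ directly into the integral criteria \eqref{exist_criterion} and \eqref{non-exist_criterion} of Theorem~\ref{main_technical}, whereas you route through the sandwich~\eqref{ppqqw0} of Theorem~\ref{generalpositive}. The caution: the paper's own proof of the lower bound in~\eqref{ppqqw0} (Lemma~\ref{exist}) carries the side hypothesis $\lim_{\rho\to0}\essinf_{r<\rho}\Env\Psi(r)>2$, which your construction violates since $\Env\Psi$ drops to $\Psi_-<2$ on every $J_-$ shell --- this is presumably why the paper bypasses Theorem~\ref{generalpositive} here. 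The repair is immediate: since $\Eenv(r)\sim r^{-2q/(q-1)}$, one checks \eqref{exist_criterion} directly exactly as in the paper's argument. Second, the paper's proof never explicitly verifies the oscillation $\underline{\underline\Psi}<2\le\overline{\overline\Psi}$, relying tacitly on Lemma~\ref{computations} together with an (unstated) choice of $\phi$ that dips below $1$; your eigenray computation makes this explicit and is in effect the relevant part of the proof of Lemma~\ref{computations}, so on this point your argument is the more complete one.
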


The case where $\Psi$ is not oscillating around 2
allows us to consider singular potentials $K(x)$ which behaves essentially like $|x|^{-\sigma}$ around the origin.  
Our next result extends Theorem \ref{generalpositive} in the following way:

\begin{theorem}\label{general}  Let $p_*$ and $p^*$ be defined by~\eqref{crit-exp}. Assume there exists $\sigma\geq 0$ such that 
\neweq{essentially}
\liminf_{|x|\rightarrow 0} |x|^{\sigma-\varepsilon}K(x)>0\,\mbox{ and }\; \limsup_{|x|\rightarrow 0} |x|^{\sigma+\varepsilon}K(x)<\infty\,,\mbox{ for all }\varepsilon>0.
\endeq

Then $p_*=-\infty$ and
\begin{equation}\label{ppqqw}
\begin{cases}
1+\frac{2-\sigma}{(\overline{\Psi}-2)_+}\leq p^*\leq
1+\frac{2-\sigma}{(\underline{\Psi}-2)_+},& \mbox{for } 0\leq \sigma< 2\mbox{ and } \lim\limits_{\rho\to
0}\essinf\limits_{r<\rho}\Env\Psi(r) >2;\\
p^*=1,& \mbox{for } \sigma>2 \mbox{ or }
\sigma=2,\underline\Psi>2,
\end{cases}
\end{equation}
with the convention $1/0=+\infty$ and $\overline{\Psi}$ and $\underline{\Psi}$ defined by
\eqref{dimensions}.
\end{theorem}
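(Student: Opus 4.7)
The approach parallels that of Theorem~\ref{generalpositive}; the new feature is that the weight $K(x)$ behaves like $|x|^{-\sigma}$ at the origin, so that $\Env\Theta(r)$ and $\env\Theta(r)$ appearing in the radial inequalities \eqref{rad2}--\eqref{rad3} are essentially $r^{-\sigma}$ up to $\varepsilon$-corrections by hypothesis~\eqref{essentially}. Using the $GL_N$-invariance of $(1)_p$, one may fix $g\in GL_N$ and prove the bounds with $\mathcal{N}(g)$ and $n(g)$ replacing $\overline{\Psi}$ and $\underline{\Psi}$, then optimize over $g$ at the end; thus we may work as if $g=\mathrm{I}$. The identity $p_*=-\infty$ is verified as in Theorem~\ref{generalpositive}: for $p<1$ the map $u\mapsto u^p$ is decreasing, and an elementary power ansatz $v(r)=Cr^{-\alpha}$ in \eqref{rad2}, with $C$ large and $\alpha$ matching the scalings, delivers a singular super-solution for any $\sigma\geq 0$.

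For the lower bound on $p^*$ (existence), I would construct a radial super-solution of \eqref{rad2}. A direct computation with $v(r)=r^{-\alpha}$ gives
\[
v''-\tfrac{\Env\Psi(r)-1}{r}(v')_-=\alpha\big(\alpha+2-\Env\Psi(r)\big)\,r^{-\alpha-2},
\]
so a pure power works whenever $\alpha>\Env\Psi(r)-2$ for small $r$ and $\alpha(p-1)\geq 2-\sigma$. To match the logarithmic-average definition of $\overline{\Psi}$, I would refine the ansatz to
\[
v(r)=\expn{\int_r^R \frac{\alpha(s)}{s}\,ds},
\]
choosing $\alpha(s)$ with $\alpha(s)+2-\Env\Psi(s)>0$ and logarithmic mean just above $\overline{\Psi}-2$; the threshold $p<1+(2-\sigma)/(\overline{\Psi}-2)_+$ is precisely what renders such an $\alpha$ compatible with the scaling required by the nonlinearity $\Env\Theta(r)\,v^p\lesssim r^{-\sigma-\alpha p}$.

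The upper bound on $p^*$, including the collapse $p^*=1$ when $\sigma>2$ or $\sigma=2$ with $\underline{\Psi}>2$, is the main obstacle. Given a singular solution $u$ of $(1)_p$, I would extract the spherical maximum $M(r):=\esssup_{|x|=r}u(x)$, show that it is essentially decreasing near the origin and satisfies a radial inequality akin to \eqref{rad3} with $\env\Psi$ in place of $\Psi$, and then bootstrap to a pointwise lower bound of the form $M(r)\geq c\,r^{-(2-\sigma)/(p-1)+o(1)}$. The liminf logarithmic-average definition of $\underline{\Psi}$ is exactly what supplies the sharp constant in this estimate, and whenever $p-1>(2-\sigma)/(\underline{\Psi}-2)$ the resulting growth contradicts the $W^{2,N}$-regularity of $u$ away from the origin. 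When $\sigma\geq 2$ in the stated sense, the singular weight dominates the diffusion on every scale and the same iteration excludes every $p>1$, yielding $p^*=1$. The main technical difficulties are (i) matching the sharp constants through the logarithmic averages in \eqref{dimensions}, (ii) controlling the sign-split terms $(v')_\pm$ in the radial ODE, which requires a priori monotonicity of $M$ near $0$, and (iii) absorbing the $\varepsilon$-loss in \eqref{essentially}, accommodated by the strict inequalities implicit in \eqref{ppqqw}.
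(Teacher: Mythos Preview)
Your treatment of $p_*=-\infty$ via the power ansatz $u(x)=C|x|^{-\alpha}$ with $C$ large and $\alpha$ large is correct and matches the paper (Lemma~\ref{lower-critical}). The substantive gaps are in both bounds on $p^*$.

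\textbf{Upper bound on $p^*$ (nonexistence).} The claim that the spherical maximum $M(r)$ ``satisfies a radial inequality akin to~\eqref{rad3}'' is unjustified: $M$ is merely a supremum over spheres and carries no a~priori differential structure. More seriously, your proposed contradiction---a lower bound $M(r)\gtrsim r^{-(2-\sigma)/(p-1)}$ against ``$W^{2,N}$-regularity away from the origin''---cannot work: membership in $W^{2,N}(B_R\setminus\overline B_\eps)$ for every $\eps>0$ imposes no constraint whatsoever on growth as $|x|\to 0$. The paper's mechanism is different and essential. One constructs a \emph{barrier} $v$ solving the final-value problem for the Emden--Fowler equation~\eqref{aaw4} with $\phi=\env\Psi-1$, $\theta=\env\Theta$, and shows (Theorem~\ref{emden-fowler-thrm}(ii)) that under~\eqref{non-exist_criterion} this $v$ blows up at some $R'>0$ before reaching $r=0$; a comparison argument (Proposition~\ref{nonex-0}) then excludes any singular solution of $(1)_p$. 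The non-extendibility of $v$ is itself nontrivial: its proof lifts $v$ to a radial PDE solution, applies the Keller--Osserman bound (Proposition~\ref{KO}) and the Harnack inequality for Fuchsian-type operators to obtain $-v'(r)\leq \tfrac{c}{r}v(r)$, and only then closes an integral inequality. None of these ingredients appears in your sketch.

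\textbf{Lower bound on $p^*$ (existence).} The pure power $v=r^{-\alpha}$ requires $\alpha>\Env\Psi(r)-2$ \emph{pointwise} for small $r$, so it delivers only the weaker bound in terms of $\overline{\overline\Psi}$ (Remark~\ref{simpler_Psi}), not $\overline\Psi$. Your refined ansatz $v(r)=\exp\{\int_r^R\alpha(s)\,\tfrac{ds}{s}\}$ runs into trouble: demanding $\alpha(s)+2-\Env\Psi(s)>0$ pointwise while keeping the logarithmic mean of $\alpha$ near $\overline\Psi-2$ forces $\alpha$ to track $\Env\Psi-2$, but $\Env\Psi$ is only $L^\infty$, so $\alpha$ need not be differentiable and the term $r\alpha'(r)$ arising in $v''$ is uncontrolled. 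The paper avoids explicit ans\"atze entirely: it verifies the integral criterion~\eqref{non-exist_criterion00} of Corollary~\ref{coro1}, which rests on a Gronwall/shooting argument for~\eqref{aaw4} (Lemma~\ref{exist_vel0}, Corollary~\ref{exist_gen}) producing a genuine singular solution without any pointwise control on $\Env\Psi$ beyond the standing hypothesis $\lim_{\rho\to0}\essinf_{r<\rho}\Env\Psi(r)>2$.
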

It is easily seen that the potentials
\[
K(x)=|x|^{-\sigma}\;, \quad K(x)=|x|^{-\sigma}\ln^{s}\frac{1}{|x|}\;,\quad K(x)=|x|^{-\sigma}\Big(2+\sin\frac{1}{|x|}\Big)\ln^{s}\Big(\ln\frac{1}{|x|}\Big),
\] 
 satisfy \eq{essentially} for $\sigma\geq 0, s\in \R$. In general, if $K(x)$ satisfies \eq{essentially}  then, so does the function $\Theta(x)$ defined in \eq{ellip-f}.

Next we are concerned with the existence of a singular solution to
$(1)_p$ in the critical case $p=p^*$. The following result shows that
in this framework the existence is related to the rate at which $\Psi$ stabilizes as
$x\to 0$.

\begin{theorem}\label{critic}
Assume $\underline\Psi=\overline{\Psi}=A\geq2$ and either $K(x)$ satisfies \eq{essentially} for some $0\leq \sigma\leq 2$ or $K(x)\in L^\infty(B_R)$ (case in which we shall take $\sigma=0$ in the following). Suppose  
there exist $h, H \in L_{\mathrm{loc}}^\infty(0,R]$ such that
\begin{equation}\label{ggw}
r^{-A}h(r)\leq \eenv(r)\leq \Eenv(r)\leq r^{-A}H(r)\quad\mbox{for
a.a. }r\in(0,R).
\end{equation}

(i) If $A>2>\sigma$ and $h$ satisfies
\begin{equation}\label{optimal}
\int_0^R h^{\frac{2-\sigma}{A-2}}(r)\frac{dr}{r}=\infty,
\end{equation}
then $(1)_p$ has no singular solution in $B_R\setminus\{0\}$ for
the critical value $p=\frac{A-\sigma}{A-2}$.

(ii) If $A>2>\sigma$,  $\lim\limits_{\rho\to
0}\essinf\limits_{r<\rho}\Env\Psi(r) >2$ and  $H$
satisfies
\begin{equation}\label{pessimal}
\int_0^R H^{\frac{2-\sigma}{A-2}}(r)\frac{dr}{r}<\infty,
\end{equation}
then $(1)_p$ has singular solutions in $B_R\setminus\{0\}$ for the
critical value $p=\frac{A-\sigma}{A-2}$.

(iii) If $A=\sigma=2$ and $h\in L^\infty(0,R)$ satisfies 
\begin{equation}\label{22}
\int_0^R h^\varepsilon(r)\frac{dr}{r}=\infty,
\end{equation}
for some $\varepsilon>0$,  then $(1)_p$ has no singular solutions in $B_R\setminus\{0\}$ for
all $p>1$, that is $p^*=1$.
\end{theorem}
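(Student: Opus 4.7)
The plan is to reduce all three statements to questions about the radial differential inequality \eqref{rad2} of Remark~\ref{Psi}---constructing a solution in the existence part (ii) and ruling it out for the spherical infimum of a hypothetical singular solution in parts (i) and (iii)---and then to read off the answer from the scaling implied by $\underline{\Psi}=\overline{\Psi}=A$ together with the two-sided sandwich \eqref{ggw}.

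For the non-existence parts (i) and (iii), I would start from a hypothetical singular solution $u$ of $(1)_p$ and work with the spherical infimum $\mu(r):=\essinf_{|x|=r}u(x)$. The Krylov--Safonov Harnack inequality, applied on a chain of annuli of geometric size with constants uniform by \eqref{elliptic}--\eqref{bvec}, gives $u(x)\asymp\mu(|x|)$ on each small annulus, so that $\mu$ inherits the singularity at $0$ and, after smoothing by a radial barrier driven by $\Env\Psi$, a pointwise radial inequality of the form
$$
\mu''+\tfrac{\Env\Psi(r)-1}{r}(\mu')_+-\tfrac{\env\Psi(r)-1}{r}(\mu')_-\;\geq\; c\,\eenv(r)\,\mu^p.
$$
Setting $w(r):=r^{A-2}\mu(r)$ and passing to the logarithmic variable $t=-\ln r$ kills the leading linear drift (since $\Env\Psi,\env\Psi\to A$ in the integral sense of \eqref{dimensions}) and produces a Riccati-type inequality for $\varphi:=w'/w$. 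In the critical regime $p=(A-\sigma)/(A-2)$ the nonlinear source scales neutrally and is governed by $h(r)^{(2-\sigma)/(A-2)}$, so solvability of the Riccati inequality near $t=\infty$ is controlled exactly by the integral in \eqref{optimal}; its divergence forces a finite-time blow-up of $\varphi$, contradicting the presumed singular behaviour of $u$. Part (iii) is the parallel argument at the doubly critical scaling $A=\sigma=2$, where the analogous reduction turns \eqref{22} into an obstruction that kills every $p>1$ simultaneously.

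For the existence part (ii), I would construct a radial function satisfying \eqref{rad2} explicitly and then invoke Remark~\ref{Psi}. With $\alpha:=A-2$ and $p-1=(2-\sigma)/\alpha$, the natural ansatz is
$$
v(r)=r^{-\alpha}\,\Phi\!\left(\int_r^{R_0}H(\rho)^{(2-\sigma)/(A-2)}\frac{d\rho}{\rho}\right),\qquad \Phi(s)=(C+s)^{-\beta},
$$
for small $\beta>0$ and large $C>0$. Substituting into \eqref{rad2} and using \eqref{ggw} to control $\Eenv(r)$, the leading $r^{-\alpha p}$ terms cancel because $\alpha(A-2-\alpha)=0$, and one is left with an algebraic inequality for $\beta$ that is solvable precisely when \eqref{pessimal} holds and $\beta$ is small. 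The strict inequality $\lim_{\rho\to0}\essinf_{r<\rho}\Env\Psi(r)>2$ is invoked here to absorb the lower-order contributions coming from the sign-split drift coefficients $(v')_{\pm}$ in \eqref{rad2}.

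The main obstacle is the first step of the non-existence argument: passing from the genuinely non-divergence inequality $\mathscr{L}u\geq Ku^p$ with only $L^\infty$ coefficients to a closed radial differential inequality for $\mu$. This demands a careful Krylov--Safonov comparison with a radial barrier driven by $\Env\Psi$, together with a quantitative oscillation-on-spheres estimate for $u$; it is here that the envelopes \eqref{envelopes} and the assumption \eqref{ggw} enter substantively, since one can only propagate information on $h$ and $H$ in an averaged sense along dyadic annuli, with error terms that must ultimately be absorbed by \eqref{optimal} or \eqref{22}.
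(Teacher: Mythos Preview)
Your non-existence argument has a genuine gap at precisely the point you flag as the main obstacle. A solution of $(1)_p$ satisfies $\mathscr{L}u\geq Ku^p\geq 0$, so $u$ is a \emph{subsolution} of the linear operator $\mathscr{L}$ (and of $\mathscr{L}-Ku^{p-1}$). Krylov--Safonov gives subsolutions only a local maximum principle, not a weak Harnack inequality; there is no mechanism to bound $\essinf_{|x|=r}u(x)$ from below in terms of $\esssup_{|x|=r}u(x)$. In fact the definition of singular solution here requires only $\limsup_{|x|\to0}u(x)=\infty$, so your $\mu(r)$ need not blow up at all, and the reduction to a closed radial differential inequality for $\mu$ collapses before any Riccati analysis can begin. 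The subsequent substitution $w=r^{A-2}\mu$ and the integral scaling argument are therefore built on a step that cannot be carried out for mere subsolutions with $L^\infty$ coefficients.

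The paper avoids this difficulty by going in the opposite direction: rather than radializing $u$, it constructs a radial \emph{barrier} $v$ solving the final-value ODE~\eqref{aaw4} with $\phi=\env\Psi-1$, $\theta=\env\Theta$, and shows (Theorem~\ref{emden-fowler-thrm}(ii)) that under~\eqref{non-exist_criterion} this $v$ blows up at some $R'\in(0,R)$. Then $\tilde u(x)=v(|x|)$ satisfies the \emph{reversed} inequality $\mathscr{L}\tilde u\leq K\tilde u^p$, and a straightforward maximum-principle comparison (Proposition~\ref{nonex-0}) traps any singular solution below $\tilde u$, which is a contradiction. The Harnack inequality does appear in the paper, but only for $v$, which solves an \emph{equation}; that is what makes the Fuchsian Harnack of \cite{pinchover94} legitimate. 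Once Theorem~\ref{main_technical} is in place, parts (i)--(iii) of Theorem~\ref{critic} reduce to checking that \eqref{optimal}, \eqref{pessimal}, \eqref{22} imply the integral criteria \eqref{non-exist_criterion}, \eqref{non-exist_criterion00}, respectively, via the sandwich~\eqref{ggw}---a two-line computation. Your existence ansatz in (ii) is in the right spirit and could probably be made to work, but the paper's abstract ODE route (Lemma~\ref{exist_vel0} plus Corollary~\ref{exist_gen}) sidesteps the need to verify the sign-split drift terms by hand.
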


\begin{remark}\label{rem}

(i) Comparison of \eqref{optimal} with \eqref{pessimal}
demonstrates the sharpness of the result. In particular, the assertion
\eq{optimal} (resp. \eq{pessimal}) holds if there exist
$c>0$, $R'\in (0,R)$ and $0<\varkappa\leq
\frac{A-2}{2-\sigma}$ (resp. $\varkappa>\frac{A-2}{2-\sigma}$)
such that
\neweq{parth}
h(r)\geq \frac c{|\ln r|^\varkappa}\;\quad\Big(\mbox{ resp. \;}
H(r)\leq \frac c{|\ln r|^\varkappa} \Big) \quad \mbox{for almost all }r\in(0,R').
\endeq
Also \eq{22} holds if there exist $c,\delta>0$ and
$R'\in (0,R)$ such that
\[
\lambda(r)\geq \frac c{|\ln r|^\delta} \quad \mbox{for almost all }r\in(0,R').
\]

(ii) Theorem \ref{general} and Theorem \ref{critic} (iii) cover
all possible situations if $\essinf Env\Psi>2$,
$\underline\Psi=\overline\Psi=A$ and $\sigma\geq 0$. Indeed,  in this case we have
\[
p^*=\begin{cases}
\infty,& \mbox{for } A\leq 2, \sigma<2;\\
1,&\mbox{for} A\leq 2,\sigma\geq 2;\\ 
1+\frac{(2-\sigma)_+}{A-2},& \mbox{for }  A>2. 
\end{cases}
\]



\end{remark}

Set
\begin{equation}\label{MandN}
\begin{split}
\Eenv_g(r)&:=\expn{\int_r^{R}\Env\Psi_g(\tau)\frac{d\tau}\tau},\\
\eenv_g(r)&:=\expn{\int_r^{R}\env\Psi_g(\tau)\frac{d\tau}\tau}.
\end{split}
\end{equation}
Both Theorem \ref{general} and Theorem \ref{critic} are consequences of the following general result.

\begin{theorem}\label{main_technical}
Let $p>1$. For $g\in GL_N$, let $\Theta_g$, $\Psi_g$ be as in~\eqref{PsiG}
and $\eenv_g$ and $\Eenv_g$ be as in~\eqref{MandN}.

(i)  There exists a singular solution to $(1)_p$ provided
\begin{equation}\label{exist_criterion}
    \int_0^{R}\left(\int_r^{R_0}\Eenv_g(\rho)\rho\,d\rho\right)^{p}\frac{\Env\Theta_g(r)}{r\Eenv_g(r)}dr<\infty.
\end{equation}

(ii) There is no singular solutions to $(1)_p$ provided
\begin{equation}\label{non-exist_criterion}
    \int_0^{R}\eenv_g^{p-1}(r)\env\Theta_g(r)r^{2p-1}dr=\infty.
\end{equation}

\end{theorem}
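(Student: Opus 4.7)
The proof of Theorem~\ref{main_technical} comprises two largely independent arguments -- a fixed-point construction for existence (i) and a contradiction argument built on a radialization lemma for non-existence (ii) -- both reduced to the radial ODE formulation of Remark~\ref{Psi}. Because the transformation $u \mapsto u \circ g^{-1}$ sends $(1)_p$ for $\mathscr{L}$ to $(1)_p$ for $\mathscr{L}_g$ and preserves the singularity at the origin, it is enough to work with the particular $g$ for which the relevant criterion holds; I accordingly suppress the subscript $g$.

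\emph{Part (i).} I seek a radial nonincreasing super-solution of the form $v(r) = \int_r^{R_0}\Eenv(\sigma)\sigma\phi(\sigma)\,d\sigma$ with $\phi \ge 0$ to be chosen. Then $(v')_+=0$ and $(v')_- = \Eenv(r)r\phi(r)$, and the identity $\Eenv'/\Eenv = -\Env\Psi/r$ reduces the ODE inequality \eqref{rad2} to the first-order inequality $-\Eenv(r)r\phi'(r) \ge \Env\Theta(r)v^p(r)$. Integrating with $\phi(R_0) = C > 0$, substituting back, and applying Fubini yields the fixed-point formulation $v \ge Tv$ with
\[
Tv(r) := CF(r) + \int_r^{R_0}\frac{\Env\Theta(\rho)\,v^p(\rho)}{\rho\,\Eenv(\rho)}\bigl(F(r)-F(\rho)\bigr)\,d\rho, \qquad F(r) := \int_r^{R_0}\Eenv(\sigma)\sigma\,d\sigma.
\]
Since $F(r)-F(\rho) \le F(r)$ for $\rho \ge r$, any $0 \le v \le MF$ satisfies $Tv \le (C + M^p I)F$ with $I := \int_0^{R_0}\tfrac{\Env\Theta(\rho)F(\rho)^p}{\rho\,\Eenv(\rho)}\,d\rho$; condition~\eqref{exist_criterion} is exactly $I < \infty$. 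Choosing $M$ with $M^{p-1}I \le 1/2$ and $C = M/2$, the map $T$ preserves the order interval $[CF, MF]$, and the monotone Picard iteration $v_0 = MF$, $v_{n+1} = Tv_n$ decreases to a fixed point $v_\infty \in [CF, MF]$ (bounded convergence in the integrand). By construction $v_\infty$ realizes \eqref{rad2} as equality and $u(x) := v_\infty(|x|)$ solves $(1)_p$ in $B_{R_0}\setminus\{0\}$; it is singular as long as $F(0^+) = \infty$, while in the residual case $F(0^+) < \infty$ one absorbs into the ansatz a small multiple of a singular radial sub-solution furnished by Theorem~\ref{generalpositive}.

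\emph{Part (ii).} Suppose for contradiction that $u$ is a singular solution of $(1)_p$. The crux is a \emph{radialization lemma}: the sphere-minimum $m(r) := \inf_{|x|=r} u(x)$ is a viscosity super-solution of
\[
m''(r) + \frac{\env\Psi(r)-1}{r}\,m'(r) \le \env\Theta(r)\,m^p(r) \quad\text{on }(0, R),
\]
with $m(r) \to \infty$ as $r \to 0$. This is established by touching $m$ from below by a smooth test function, locating the resulting contact point $x_0$ of $\psi(|\cdot|)$ and $u$, and combining $\mathscr{L}u(x_0) \ge K(x_0)u^p(x_0)$ with the uniform ellipticity~\eqref{elliptic} and the nonnegativity of the tangential Hessian of $u - \psi(|\cdot|)$ at $x_0$; the \emph{essinf}-defined envelopes $\env\Psi$ and $\env\Theta$ emerge naturally from these pointwise bounds. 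With the integrating factor $G(r) := 1/(r\eenv(r))$, whose logarithmic derivative is $(\env\Psi-1)/r$, the ODE becomes $(Gm')'\le G\,\env\Theta\,m^p$. Two successive integrations from $r$ up to a fixed $R_0 < R$, with the lower boundary terms absorbed using $m(r) \to \infty$ and Gronwall-type manipulations to control the $m^p$ nonlinearity, produce a finite upper bound for $\int_0^{R_0}\eenv^{p-1}(r)\,\env\Theta(r)\,r^{2p-1}\,dr$, directly contradicting~\eqref{non-exist_criterion}.

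\emph{Main obstacle.} The delicate ingredient is the radialization lemma in (ii): unlike the divergence-form case, the sphere minimum of a super-solution of a non-divergence operator is not a classical radial super-solution, and one must argue in the viscosity framework with a careful accounting of both $\mathop{Tr}\mathbf{a}$ and the drift $\mathbf{b}\cdot x$ at the contact point to produce precisely the envelopes $\env\Psi$ and $\env\Theta$ in the limiting ODE. A secondary and much milder technicality in (i) is the verification that $F(0^+) = \infty$, required to ensure the constructed $v_\infty$ is genuinely singular rather than merely positive.
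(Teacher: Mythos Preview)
Your proposal contains genuine gaps in both parts, and the argument for~(ii) is fundamentally broken.

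\textbf{Part (ii).} The radialization lemma fails as stated. If a smooth $\psi$ touches the sphere-minimum $m$ from below at $r_0$, then at the point $x_0$ where $u(x_0)=m(r_0)$ the radial function $\psi(|\cdot|)$ touches $u$ from below; hence $D^2u(x_0)\ge D^2\psi(|\cdot|)(x_0)$ and so $\mathscr{L}\big(\psi(|\cdot|)\big)(x_0)\le \mathscr{L}u(x_0)$. Combined with $\mathscr{L}u(x_0)\ge K(x_0)u^p(x_0)$ this yields \emph{no} inequality whatsoever for $\mathscr{L}\big(\psi(|\cdot|)\big)$: the two bounds point the same way. The viscosity inequality you claim simply does not follow. (Using the sphere-maximum and touching from above would produce an inequality, but then one must also justify why the maximum tends to infinity monotonically and why the test function can be taken decreasing to recover the correct envelope; neither step is addressed.) Separately, your assertion $m(r)\to\infty$ is unsupported: the definition of singular solution only gives $\limsup_{|x|\to0}u(x)=\infty$, which concerns the supremum along a sequence, not the infimum. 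Finally, the closing ``two successive integrations with Gronwall-type manipulations'' is far too vague; the paper's actual argument in this direction is delicate and rests on the Keller--Osserman bound (Proposition~\ref{KO}) and the scale-invariant Harnack inequality for Fuchsian operators, neither of which appears in your sketch.

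The paper takes a completely different route for~(ii): it constructs a radial \emph{barrier} $v$ solving~\eqref{rad3} with suitable final data that blows up at some $R'>0$, and then Proposition~\ref{nonex-0} uses the maximum principle to rule out any singular solution. The blow-up of the barrier is obtained from Theorem~\ref{emden-fowler-thrm}(ii), whose proof is where Keller--Osserman and Harnack enter.

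\textbf{Part (i).} Your fixed-point construction is correct and pleasant as far as it goes, but it only produces a solution with $v_\infty\asymp F$, so it is singular \emph{only if} $F(0^+)=\int_0^{R_0}\Eenv(\rho)\rho\,d\rho=\infty$. This fails whenever $\Env\Psi$ stays below $2$ near the origin (e.g.\ $\Env\Psi\equiv A<2$ gives $\Eenv(\rho)\rho\sim\rho^{1-A}$, integrable), yet~\eqref{exist_criterion} may certainly hold in that regime. Your proposed patch---``absorb a small multiple of a singular radial sub-solution furnished by Theorem~\ref{generalpositive}''---is circular, since Theorem~\ref{generalpositive} is itself deduced from Theorem~\ref{main_technical}. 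The paper avoids this issue by a shooting argument: Lemma~\ref{exist_vel0} (essentially your bound) gives a bounded solution for $\lambda=0$, Lemma~\ref{blow-up} shows blow-up for sufficiently negative $\lambda$, and Corollary~\ref{exist_gen} picks the extremal $\lambda$ to obtain a solution that extends to $(0,R)$ and tends to infinity at $0$.
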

The proof of the existence part in Theorem \ref{main_technical}
relies on constructing a radial singular solution to $(1)_p$, more
precisely, a solution $v$ to~\eqref{rad2} on the interval $(0,R)$
such that $v(r)\to\infty$ as $r\to0$. The non-existence part in Theorem
\ref{main_technical} is achieved by comparison with a radial
barrier which is a solution to~\eqref{rad3} (see
Proposition~\ref{nonex-0} for the argument). Both constructions
lead us to the study of the final value problem for the
Emden-Fowler-type equation~\eqref{aaw4}. A major step in our
approach is to show that solutions to~\eqref{aaw4} can be extended
to the interval $(0,R)$. To this aim, we adapt classical ideas
(see \cite{bellman53}, \cite{kig}, \cite{mambriani29},
\cite{talia}) to our singular setting. An interesting feature of
our construction of the barrier which cannot be extended up to
zero, is that we use PDEs techniques such as the Harnack
inequality for the Fuchsian type operators (see
\cite{pinchover94}) and the Keller-Osserman-type
estimate~\eqref{Keller}.

\begin{remark}\label{criteria_comparison}

(i) Assumptions \eqref{exist_criterion} and \eqref{non-exist_criterion} are mutually exclusive. Indeed, we should note that   
for $r\leq \frac12R$ we have
\[
\int_r^{R}\Eenv(\rho)\rho\,d\rho
>
\int_r^{2r}\Eenv(\rho)\rho\,d\rho
> 2^{-\|\Env\Psi\|_\infty-1}r^2\Eenv(r).
\]
So
\[\begin{split}
\int_0^{\tfrac12R}\left(\int_r^{R}\Eenv(\rho)\rho\,d\rho\right)^{p}\Env\Theta(r)\frac{dr}{r\Eenv(r)}
>
&
c_p\int_0^{\tfrac12R}
    \Eenv^{p-1}(r)
    \Env\Theta(r)r^{2p-1}dr
\\
\geq
&
\int_0^{\tfrac12R}
    \eenv^{p-1}(r)\env\Theta(r)
    r^{2p-1}dr.
\end{split}
\]
Thus, \eqref{exist_criterion} and \eqref{non-exist_criterion} are mutually exclusive.

(ii) Clearly, $\Eenv_g\sim \eenv_g$ if and only if
\begin{equation}\label{equiv}
    \int_0^{R}\big(\Env\Psi_g(r) - \env\Psi_g(r)\big)\frac{dr}r<\infty.
\end{equation}
However, this does not make \eqref{exist_criterion} and \eqref{non-exist_criterion} an alternative
since in general $\int_r^{R}\Eenv_g(\rho)\rho\,d\rho \not\sim r^2\Eenv_g(r)$.
On the other hand, $\int_r^{R}\Eenv_g(\rho)\rho\,d\rho \sim r^2\Eenv_g(r)$ provided
$\essinf\Env\Psi_g >2$. Indeed, we have
\[
\begin{split}
\int_r^{R} & \Eenv_g(\rho)\rho\,d\rho =
R^2\int_r^{R}\mathrm{exp}\Big\{\int_\rho^{R}\big(\Env\Psi_g(\tau)-2\big)\frac{d\tau}\tau\Big\}\frac{d\rho}\rho
\\
\leq &
\tfrac{R^2}{\essinf\Env\Psi_g - 2}
\int_r^{R}\frac{\Env\Psi_g(\rho)-2}\rho
\expn{\int_\rho^{R}\big(\Env\Psi_g(\tau)-2\big)\frac{d\tau}\tau}
d\rho
\\
= &
\tfrac{1}{\essinf\Env\Psi_g - 2} \big( r^2\Eenv_g(r) - R^2\big).
\end{split}
\]
Obviously, it suffices to have 
$
\lim\limits_{\rho\to
0}\essinf\limits_{r<\rho}\Env\Psi_g(r) >2$,
since we do not impose restrictions on the choice of $R$.

(iii) In applications it is often complicated to verify the conditions \eq{exist_criterion} and \eq{non-exist_criterion} in Theorem~\ref{main_technical}
because of oscillating $\Psi$. It turns out that in~\eqref{exist_criterion} and~\eqref{non-exist_criterion} the envelopes of $\Psi$ can be replaced by their averages (see Appendix~A).

\end{remark}
%
%
%
%

Using Remark \ref{criteria_comparison} (ii) we deduce

\begin{corollary}\label{coro1}
Assume $\lim\limits_{\rho\to
0}\essinf\limits_{r<\rho}\Env\Psi(r) >2$ and $p>1$. There exists a singular solution to $(1)_p$ provided
\begin{equation}\label{non-exist_criterion00}
    \int_0^{R}\Eenv_g^{p-1}(r)\Env\Theta_g(r)r^{2p-1}dr<\infty.
\end{equation}

\end{corollary}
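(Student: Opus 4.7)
The strategy is to deduce Corollary~\ref{coro1} directly from the existence criterion \eqref{exist_criterion} of Theorem~\ref{main_technical}(i) by inserting the upper bound
\[
\int_r^{R}\Eenv_g(\rho)\,\rho\,d\rho \;\leq\; C\, r^{2}\Eenv_g(r)
\]
already derived in Remark~\ref{criteria_comparison}(ii). No new analytical input is required; the argument is essentially a substitution exercise.

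First I would shrink $R$. Because Definition~\ref{defsol} only requires a singular solution to live on some punctured ball $B_R\setminus\{0\}$, existence is a purely local assertion near the origin, so the hypothesis $\lim_{\rho\to 0}\essinf_{r<\rho}\Env\Psi_g(r)>2$ permits me, after choosing $R$ small enough, to strengthen it to the uniform lower bound $\essinf_{(0,R)}\Env\Psi_g>2$. This is precisely the regime in which the computation in Remark~\ref{criteria_comparison}(ii) produces the displayed bound with the explicit constant $C=(\essinf_{(0,R)}\Env\Psi_g-2)^{-1}$.

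Next I would raise that bound to the $p$-th power and substitute it into the integrand of \eqref{exist_criterion}. After cancelling one copy of $\Eenv_g(r)$ against the factor in the denominator, the integrand becomes a constant multiple of $r^{2p-1}\Eenv_g^{p-1}(r)\Env\Theta_g(r)$, i.e.\ precisely the integrand appearing in the hypothesis \eqref{non-exist_criterion00}. Since that integral is finite by assumption, the criterion \eqref{exist_criterion} is satisfied, and Theorem~\ref{main_technical}(i) delivers the desired singular solution. The only mildly delicate point—and the closest thing to an obstacle—is the passage from the asymptotic form of the lower bound on $\Env\Psi_g$ to its uniform version on $(0,R)$, which is legitimate precisely because one is free to choose $R$ small at the outset.
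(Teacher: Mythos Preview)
Your proposal is correct and follows exactly the route the paper intends: the paper simply states that Corollary~\ref{coro1} follows ``using Remark~\ref{criteria_comparison}(ii),'' and your argument spells out precisely that substitution, including the observation (already noted at the end of Remark~\ref{criteria_comparison}(ii)) that one may shrink $R$ to pass from the asymptotic hypothesis to the uniform one.
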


The rest of the paper is organized as follows. Section 2 contains some preliminary results concerning $(1)_p$. Sections 3 is devoted to the study of an Emden-Fowler equation which we use in the proof of Theorem \ref{main_technical}. Section 4 contains the proofs of all our main results while in Section 5 we give several examples that illustrate our findings in Theorem~\ref{general} and Theorem~\ref{critic}. In Section 6 we present some open problems that arise from our approach to $(1)_p$.

\section{Some auxiliary results}
In this section we collect some preliminary results regarding inequality
$(1)_p$. Our first result in this sense shows that a singular solution $u$ of $(1)_p$ is in some sense non-increasing around zero.
\begin{proposition}\label{l2}
Let $u$ be a singular solution of $(1)_p$ in $B_R\setminus\{0\}$ and for all $0<r<R$ denote $M(r)=\max_{|x|=r}u(x)$. Then, there exists a sequence $\{R_k\}$ of positive real numbers converging to zero such that
\[
M(R_k)<M(r)\quad\mbox{ for all }r\in(0,R_k).
\]
\end{proposition}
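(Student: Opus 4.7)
The plan is to show that the spherical maximum function $r\mapsto M(r)$ is strictly decreasing on some initial interval $(0,r_*)$, $r_*\in(0,R]$; any decreasing sequence $R_k\to 0$ contained in that interval will then automatically satisfy $M(R_k)<M(r)$ for all $r\in(0,R_k)$.

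First I would observe that, since $u>0$ and $\essinf K>0$, we have $\mathscr{L}u\geq K u^p>0$ a.e.\ in $B_R\setminus\{0\}$, so $u$ is a strict subsolution of $\mathscr{L}$ and, in particular, is not constant on any open subset of $B_R\setminus\{0\}$. For every pair $0<r_1<r_2<R$, the strong maximum principle for $W^{2,N}$ subsolutions of non-divergence uniformly elliptic operators (see, e.g., Gilbarg--Trudinger, Chapter~9) applied on the closed annulus $\overline{A}_{r_1,r_2}=\{r_1\leq|x|\leq r_2\}$ yields that $u$ attains its maximum on $\overline{A}_{r_1,r_2}$ only on the boundary spheres. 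Since $u$ is continuous on $B_R\setminus\{0\}$ (by Sobolev embedding applied to $W^{2,N}$ on every annulus) and $M(r)$ is attained at some point of $\{|x|=r\}$, one obtains
\[
M(r)<\max\bigl(M(r_1),M(r_2)\bigr)\qquad\text{for every }r\in(r_1,r_2).
\]

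Next I would use the arbitrariness of $r_1,r_2$ to deduce that $M:(0,R)\to(0,\infty)$ is unimodal: there exists $r_*\in(0,R]$ such that $M$ is strictly decreasing on $(0,r_*)$ and, if $r_*<R$, strictly increasing on $(r_*,R)$. Indeed, the displayed inequality shows that if $M$ took the same value at two points, it would have to be strictly smaller in between (impossible at the minimum value); hence $\inf M$ is attained at most at one interior point, and the same inequality then forces $M$ to be strictly monotone on each side. The singularity assumption $\limsup_{|x|\to 0}u(x)=\infty$ translates into $\limsup_{r\to 0^+}M(r)=\infty$, which excludes $M$ being bounded above or monotone increasing near $0$; hence the decreasing piece $(0,r_*)$ is nonempty.

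Finally, any sequence $\{R_k\}\subset(0,r_*)$ with $R_k\to 0$ satisfies the conclusion, since strict monotonicity on $(0,r_*)$ gives $M(R_k)<M(r)$ whenever $r\in(0,R_k)$. The main subtlety is the strict inequality in the strong maximum principle step, which depends essentially on $u$ being a \emph{strict} subsolution: were $u$ to attain its maximum on $\overline{A}_{r_1,r_2}$ at an interior point, the strong maximum principle would force $u$ to be constant on the annulus, contradicting $\mathscr{L}u\geq Ku^p>0$. The passage from this spherical-max inequality to strict monotonicity near the singularity is then routine.
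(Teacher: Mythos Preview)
Your proof is correct and in fact establishes more than the paper asks: you show $M$ is strictly decreasing on an entire interval $(0,r_*)$, not merely along a sequence. The paper's own argument is shorter and uses only the \emph{weak} maximum principle, proceeding by contradiction: if no such sequence exists then for every small $r$ there is $r'\in(0,r)$ with $M(r')\leq M(r)$, the maximum principle on $B_r\setminus \overline B_{r'}$ gives $M(s)\leq M(r)$ for $s\in[r',r]$, and iterating this makes $M$ nondecreasing near $0$, contradicting the singularity. Your route via the strong maximum principle and strict quasiconvexity of $M$ is a legitimate alternative, at the mild cost of invoking Krylov--Safonov on each annulus to justify the strong principle for $W^{2,N}$ subsolutions with merely measurable coefficients. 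One point to tighten: your unimodality step (``$\inf M$ is attained at most at one interior point, hence strictly monotone on each side'') is not quite an argument, since $\inf M$ need not be attained. The clean way is to observe that if $M(a)\leq M(b)$ for some $a<b$, then for every $c>b$ your displayed inequality with $(r_1,r,r_2)=(a,b,c)$ forces $M(c)>M(b)$, so $M$ is strictly increasing on $[b,R)$; taking $r_*$ to be the infimum of such $b$ (positive because $\limsup_{r\to0}M(r)=\infty$ rules out $M$ increasing near $0$) gives the turning point.
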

\begin{proof}
Assume the contrary. Then there exists $R'\in (0,R)$ such that
for every $r\in(0,R')$ there exists $r'\in (0,r)$ that satisfies $M(r')\leq M(r)$. Then by the maximum principle $u(x)\leq M(r)$ in $B_r\setminus B_{r'}$ and hence $M(s)\leq M(r)$ for all $r'\leq s\leq r$.
Thus $r\longmapsto M(r)$ is nondecreasing on $(0,R')$ which contradicts the fact that $\limsup_{|x|\rightarrow 0}u(x)=\infty$.
\end{proof}

The finite value problem for equation~\eqref{rad3} is the main tool for
our proof of non-existence of a singular solution to $(1)_p$, as shown in the
next result.

\begin{proposition}\label{nonex-0}
Let $p>1$. Assume there exists $R_0>0$ such that, for all $R\in(0,R_0)$ and $M>0$ there exist
$\lambda>0$ such that the unique local solution $v$ to the final value problem for the equation~
\eqref{rad3} with $v(R)=M$, $v'(R)=\lambda$ does not continue to $r=0$ (that is, there exists
$R'\in(0,R)$ such that  $v(r)\to\infty$ as $r\searrow R'$).
Then $(1)_p$ has no singular solutions.
\end{proposition}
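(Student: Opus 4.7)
The plan is a barrier/comparison argument. Assume for contradiction that $w$ is a positive singular solution of $(1)_p$ in $B_R\setminus\{0\}$. First, invoke Proposition \ref{l2} to pick $R=R_k<R_0$ with $M:=M(R)=\max_{|x|=R}w(x)$ satisfying the strict inequality $M<M(r)$ for every $r\in(0,R)$. Applying the hypothesis to this $R$ and $M$ produces $\lambda>0$ and a solution $v$ of the final value problem for \eqref{rad3} with $v(R)=M$, $v'(R)=\lambda$, together with some $R'\in(0,R)$ such that $v(r)\to\infty$ as $r\searrow R'$. Set $u(x):=v(|x|)$ on the annulus $\Omega:=\{R'<|x|<R\}$; by Remark \ref{Psi}, $u$ satisfies $\mathscr{L}u\leq K(x)u^p$ in $\Omega$ and $u(x)\to\infty$ as $|x|\to R'^+$.

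The central step is to prove $w\leq u$ in $\Omega$. Let $\phi:=w-u$; the regularity assumption $w\in W^{2,N}(B_R\setminus\overline B_\eps)$ for all $\eps>0$ combined with $v\in C^2(R',R]$ gives $\phi\in W^{2,N}(\Omega)$, with $\mathscr{L}\phi\geq K(x)\bigl(w^p-u^p\bigr)$. For the boundary data, $\phi\leq 0$ on $\partial B_R$ because $w\leq M=u$ there, and $\phi\to-\infty$ as $|x|\to R'^+$ because $u$ blows up while $w$ stays locally bounded in $B_R\setminus\{0\}$. Hence the open set $E:=\{x\in\Omega:\phi(x)>0\}$ is bounded away from $\partial B_{R'}$, so $\overline{E}\subset\Omega\cup\partial B_R$, and $\phi\leq 0$ on $\partial E$ (by continuity on the interior part and by the boundary bound on the part lying on $\partial B_R$). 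On $E$ we have $w^p>u^p$, so $\mathscr{L}\phi\geq 0$; since $\mathscr{L}$ carries no zeroth-order term, the ABP/weak maximum principle forces $\sup_E\phi\leq 0$, giving $E=\emptyset$ and therefore $M(r)\leq v(r)$ for all $r\in(R',R)$.

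To close, note that the condition $v'(R)=\lambda>0$ makes $v$ strictly decrease as $r$ moves left from $R$, so $v(r)<M$ on some left-neighborhood of $R$; since $v\to\infty$ at $R'$, it attains a minimum at some $r^*\in(R',R)$ with $v(r^*)<M$. Combining this with the comparison estimate and the selection property gives
\[
M<M(r^*)\leq v(r^*)<M,
\]
the required contradiction. The step I expect to be the most delicate is the comparison: $\mathscr{L}$ is a non-divergence operator with only $L^\infty$ coefficients, $\phi$ has just $W^{2,N}$ regularity, and the supersolution $u$ blows up on the inner boundary of $\Omega$. The blow-up actually works in our favor, since it pushes the positivity set $E$ compactly into $\Omega\cup\partial B_R$ and trivializes the boundary trace on $\partial B_{R'}$; what has to be checked carefully is that the ABP estimate applies to the strong-subsolution $\phi$ on the (possibly irregular) set $E$ with $\phi\leq 0$ on $\partial E$, and that the pointwise bound $w\leq M$ on $\partial B_R$ is inherited by $\overline{E}\cap\partial B_R$.
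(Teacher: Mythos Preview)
Your argument is correct and follows essentially the same comparison-with-a-blowing-up-barrier strategy as the paper: pick $R$ via Proposition~\ref{l2}, build the radial supersolution $\tilde u(x)=v(|x|)$ from the hypothesis, and apply the maximum principle on the set where the singular solution exceeds $\tilde u$. The only cosmetic differences are that the paper defines its comparison domain directly as $\{v(|x|)<u(x)\}$ and derives the contradiction in one stroke, whereas you first prove $E=\emptyset$ and then extract the contradiction via the minimum point $r^*$; also, your claim ``$\phi\in W^{2,N}(\Omega)$'' on the full annulus is literally false since $v$ blows up at $R'$, but as you yourself note, the argument only needs this regularity on $E\Subset\Omega\cup\partial B_R$, so nothing is lost.
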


\begin{proof}
Assume that there exist $R_1>0$ and a singular solution $u$ to $(1)_p$ in $B_{R_1}\setminus\{0\}$. Then, by Proposition \ref{l2}, there exists $R\in(0,R_0\wedge R_1)$ such that
\[
\max\limits_{|x|=r}u(x)>M:=\max\limits_{|x|=R}u(x) \mbox{ for all }r\in(0,R).
\]
Let $v$ be as above. Then the domain
\[
\Omega:=\{x \in B_R\setminus \overline B_{R'} :v(|x|)<u(x)\}
\]
 is non-empty  and $u(x)\leq v(|x|)$ for $x\in\partial \Omega$.
However, it follows from Remark \ref{Psi} that, with $\tilde u(x):=v(|x|)$, one has
\[
\mathscr L (u-\tilde u) \geq K(x)( u^p - \tilde u^p) \geq 0 \mbox{ in }\Omega.
\]
So $u(x)\leq  v(|x|)$ for $x\in\Omega$ by the maximal principle, which contradicts the definition of $\Omega$.
\end{proof}

The following estimate of a well-known type will be required for construction
of a solution to~\eqref{rad3}, as described in Proposition~\ref{nonex-0}.

\begin{proposition}\label{KO}{\sf (Keller-Osserman type estimate \cite{keller,osserman})}
Let $p>1$ and $K\in L^\infty(B_R)$. Then, there exists $C=C(N,R, K, p)>0$ such that any
solution $u$ of $(1)_p$ satisfies
\begin{equation}\label{Keller}
u(x)\leq C|x|^{\frac{2}{1-p}}\quad\mbox{ in
}B_{2R/3}\setminus\{0\}.
\end{equation}
\end{proposition}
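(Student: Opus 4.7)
The plan is to prove this Keller--Osserman type bound by the classical barrier technique: construct an explicit radial supersolution on a small ball about $x_0$ that blows up on its boundary, then invoke the comparison principle. Fix $x_0\in B_{2R/3}\setminus\{0\}$ and set $\rho := |x_0|/3$, so that $B_\rho(x_0) \subset B_R\setminus\{0\}$ (indeed $|x_0|+\rho<R$ and $|x_0|-\rho=2\rho>0$). The only delicate point is the singular drift $\mathbf{b}$, but this choice of $\rho$ tames it completely: on $B_\rho(x_0)$ one has $|x|\geq 2\rho$, so \eqref{bvec} yields $|\mathbf{b}(x)|\leq c/(2\rho)$, hence $|\mathbf{b}(x)\cdot(x-x_0)|\leq c/2$ uniformly.

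Next I would introduce the barrier $w(x) := A\bigl(\rho^2-|x-x_0|^2\bigr)^{-\alpha}$ with $\alpha := \tfrac{2}{p-1}$ (chosen so that $\alpha p = \alpha+2$) and $A>0$ to be determined. Writing $\varphi := \rho^2 - |x-x_0|^2$, a direct computation gives
\[
\mathscr{L}w = 4A\alpha(\alpha+1)\varphi^{-\alpha-2}(\mathbf{a}(x-x_0),x-x_0) + 2A\alpha\varphi^{-\alpha-1}\bigl[\mathrm{Tr}\,\mathbf{a} + \mathbf{b}\cdot(x-x_0)\bigr].
\]
Using $(\mathbf{a}(x-x_0),x-x_0)\leq\nu\rho^2$, $\mathrm{Tr}\,\mathbf{a}\leq N\nu$, the drift bound above, and $\varphi^{-\alpha-1}\leq\rho^2\varphi^{-\alpha-2}$, one finds $\mathscr{L}w\leq C_1 A\rho^2\,\varphi^{-\alpha-2} = C_1 A^{1-p}\rho^2\,w^p$ for a constant $C_1=C_1(N,\nu,p,c)$. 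Setting $A := (C_1\rho^2/c_0)^{1/(p-1)}$ with $c_0 := \essinf K > 0$ therefore yields $\mathscr{L}w\leq c_0 w^p\leq K(x)w^p$ throughout $B_\rho(x_0)$.

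Finally, since $w\to\infty$ on $\partial B_\rho(x_0)$ while $u$ is continuous on $\overline{B_{\rho'}(x_0)}$ for each $\rho'<\rho$ (by the Sobolev embedding applied to $u\in W^{2,N}$), we have $u\leq w$ on $\partial B_{\rho'}(x_0)$ once $\rho'$ is close enough to $\rho$. On the set $\Omega':=\{u>w\}\subset B_{\rho'}(x_0)$ one has $\mathscr{L}(u-w)\geq K(u^p-w^p)\geq 0$ and $u-w=0$ on $\partial\Omega'$, so by the Alexandrov--Bakelman--Pucci maximum principle applied on $B_{\rho'}(x_0)$ (where all coefficients of $\mathscr{L}$ are bounded) $\Omega'$ must be empty. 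Letting $\rho'\nearrow\rho$ and evaluating at $x_0$ gives
\[
u(x_0) \leq w(x_0) = A\rho^{-2\alpha} = (C_1/c_0)^{1/(p-1)}\rho^{-2/(p-1)} = C|x_0|^{2/(1-p)},
\]
with $C$ depending only on $N,\nu,p,c$ and $c_0$. The main obstacle is simply the bookkeeping in the barrier computation and verifying that the drift contribution is of strictly lower order than the principal second-order term; this is automatic here because $|\mathbf{b}\cdot(x-x_0)|$ is bounded by a pure constant on $B_\rho(x_0)$, rather than blowing up like $1/|x_0|$ as one might naively fear.
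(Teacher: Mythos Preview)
Your proof is correct. The approach is a variant of the paper's own argument rather than identical to it: the paper first rescales $y=x/r$, $v(y)=r^{2/(p-1)}u(ry)$ so as to work on a \emph{fixed} annulus $B_{3/4}\setminus B_{1/4}$, and there builds a barrier $w(y)=c\big[(9/16-|y|^2)(|y|^2-1/16)\big]^{2/(1-p)}$ that blows up on \emph{both} boundary spheres; comparison on the annulus then yields the bound on $\max_{r/3<|x|<2r/3}u$. You instead work directly, choosing a ball $B_\rho(x_0)$ with $\rho=|x_0|/3$ and a single-blow-up barrier $A(\rho^2-|x-x_0|^2)^{-2/(p-1)}$; the scaling is hidden in the proportionality $\rho\sim|x_0|$ and in the homogeneity $\alpha p=\alpha+2$. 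Both routes rely on the same structural fact you isolated --- that on this region $|\mathbf b(x)\cdot(x-x_0)|$ is bounded by a constant depending only on $N,c$ --- so the drift is harmless. Your version is the more common textbook Keller--Osserman construction and has the advantage that the barrier computation is shorter; the paper's annular barrier, on the other hand, gives the bound simultaneously on a whole dyadic shell and makes the scale invariance of the problem explicit. You are also more careful than the paper in naming the maximum principle used (ABP for $W^{2,N}$ subsolutions) and in handling the approximation $\rho'\nearrow\rho$; the paper simply asserts the comparison.
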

\begin{proof} We use an idea from \cite{KLS1} that goes back to
\cite{KLa}. Without loss of generality we may assume $K\equiv 1$. For $0<r<R$ let us set
\[
y=\frac{x}{r} \quad\mbox{ and }\quad v(y)=r^{\frac{2}{p-1}}u(x).
\]
Then $v$ satisfies
\[
\widetilde{\mathscr L}v:=\sum_{i,j=1}^{N} \widetilde a_{ij}(y)\frac{\partial^2
v}{\partial y_i\partial y_j}+\sum_{i=1}^N \widetilde b_i(y)\frac{\partial
v}{\partial y_i}
\ge v^p\quad\mbox{ in
}B_1\setminus\{0\}\,,
\]
where
\[
\widetilde a_{ij}(y)=a_{ij}(ry)\quad\mbox{ and } \quad \widetilde b_i(y)=rb_i(ry).
\]
Note that $\widetilde{\bf a}=(\widetilde a_{ij})$ and $\widetilde {\bf b}=(\widetilde b_i)$ satisfy similar properties to \eq{elliptic} and \eq{bvec}.
Now let
\[
w(y):=c\left[\Big(\frac{9}{16}-|y|^2\Big)\Big(|y|^2-\frac{1}{16}
\Big) \right]^{\frac{2}{1-p}}\,,
\]
where $c>0$ is taken such that $\widetilde{\mathscr L}w\le w^p$ in
$B_{3/4}\setminus B_{1/4}$. Since
\[
w=\infty\quad\mbox{ on }\partial (B_{3/4}\setminus B_{1/4})\,,
\]
it follows that
\[
v\le w\quad\mbox{ in }B_{3/4}\setminus B_{1/4}.
\]
In particular
\[
v(y)\leq \max_{1/3<|y|<2/3} w(y)\quad \mbox{ in }B_{3/4}\setminus
B_{1/4}\,,
\]
which yields
\[
\max_{r/3<|x|<2r/3}u(x)\leq Cr^{\frac{2}{1-p}}.
\]
Since $0<r<R$ was arbitrarily chosen this implies \eq{Keller}.
\end{proof}

%

Our last result in this section concerns a particular type of matrix ${\bf a}$ that satisfies \eq{elliptic} and will be used later in the proof of Theorem \ref{unstable} as well as in the construction of some examples in Section \ref{examples}.

\begin{lemma}\label{computations}
Let $R>0$ and let $\beta,\gamma \in L^\infty(0,R)$,
\[\lim\limits_{r\rightarrow 0}\essinf\limits_{\tau\in(0,r)}\gamma(\tau)>-1.\] Denote
\[
\overline\gamma:= \lim\limits_{r\rightarrow 0}\esssup\limits_{\tau\in(0,r)}\gamma(\tau),~
\underline\gamma:=\lim\limits_{r\rightarrow 0}\essinf\limits_{\tau\in(0,r)}\gamma(\tau),\quad
\overline\beta:= \lim\limits_{r\rightarrow 0}\esssup\limits_{\tau\in(0,r)}\beta(\tau),~
\underline\beta:=\lim\limits_{r\rightarrow 0}\essinf\limits_{\tau\in(0,r)}\beta(\tau).
\]
We assume that, for every couple of limit points $(\overline\gamma, \overline\beta)$,
$(\underline\gamma, \overline\beta)$, $(\overline\gamma, \underline\beta)$ and $(\underline\gamma,\underline\beta )$, there exists a common sequence $r_n\to0$ realizing
both limits.

Set
\begin{equation}\label{matgs}
\mathbf{a}(x) :=\mathrm{I} + \gamma(|x|) \frac{x\otimes x}{|x|^2},
\qquad \mathbf{b}(x) :=\beta(|x|) \frac{x}{|x|^2},
\end{equation}
that is,
\[
\mathbf{a}_{ij}(x) = \delta_{ij}+ \gamma(|x|) \frac{x_ix_j}{|x|^2},
\quad \mathbf{b}_{k}(x) = \beta(|x|) \frac{x_k}{|x|^2},
\quad i,j,k=1,2,\dots,N.
\]
Then, for $\Psi$, $\underline{\underline\Psi}$ and $\overline{\overline\Psi}$ defined as in \eq{ellip-f} and \eqref{simpler_Psi_eq}, one has
\neweq{doipsi}
\begin{split}
\Psi(x) & = 1 + \frac{N-1 + \beta(|x|)}{1+\gamma(|x|) },\\
\overline{\overline{\Psi}} & =1 + (N-1+ \overline{\beta})
\begin{cases}
(1+\underline\gamma)^{-1}& \;\mbox{ if }\overline{\beta}\geq 1- N,\\
(1+\overline\gamma)^{-1}& \;\mbox{ if } \overline{\beta}< 1- N;
\end{cases}
\\
\underline{\underline{\Psi}} & = 1+ (N-1+ \underline{\beta})
\begin{cases}
(1+\overline\gamma)^{-1}& \;\mbox{ if } \underline{\beta}\geq 1- N,\\
(1+\underline\gamma)^{-1}& \;\mbox{ if } \underline{\beta}< 1- N.
\end{cases}
\end{split}
\endeq
\end{lemma}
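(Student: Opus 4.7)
The lemma is a direct calculation from the definitions followed by a careful case split. I would first compute $\Psi(x)$ by substitution. Since $\mathop{Tr}(x\otimes x)=|x|^2$, we have $\mathop{Tr}\mathbf{a}(x)=N+\gamma(|x|)$. Similarly, $(x\otimes x)x=|x|^2\,x$ gives $\mathbf{a}(x)x=(1+\gamma(|x|))x$, so $(\mathbf{a}(x)x,x)/|x|^2=1+\gamma(|x|)$. Finally $\mathbf{b}(x)\cdot x=\beta(|x|)$. Plugging these three identities into~\eqref{ellip-f} yields $\Psi(x)=1+(N-1+\beta(|x|))/(1+\gamma(|x|))$, the first claim in~\eqref{doipsi}. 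The standing hypothesis $\lim_{r\to 0}\essinf_{\tau\in(0,r)}\gamma(\tau)>-1$ guarantees that $1+\gamma(t)$ stays bounded away from $0$ near the origin, so all ratios below are well defined.

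Because $\Psi$ depends on $x$ only through $|x|$, the ess-sup (resp.\ ess-inf) over $B_r$ equals the ess-sup (resp.\ ess-inf) over $t\in(0,r)$ of the one-variable map $\Psi_0(t):=1+(N-1+\beta(t))/(1+\gamma(t))$. The case dichotomy in~\eqref{doipsi} is governed by the sign of the numerator in the limit. When $\overline\beta\geq 1-N$, the numerator is eventually nonnegative along extremizing sequences, so maximizing the ratio means pushing $\beta$ up toward $\overline\beta$ and $\gamma$ down toward $\underline\gamma$, producing $(N-1+\overline\beta)/(1+\underline\gamma)$. When $\overline\beta<1-N$, the numerator is eventually strictly negative; maximizing a negative ratio with positive denominator means \emph{increasing} the denominator, i.e.\ pushing $\gamma$ up toward $\overline\gamma$, producing $(N-1+\overline\beta)/(1+\overline\gamma)$. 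The formula for $\underline{\underline\Psi}$ is proved the same way, split by the sign of $N-1+\underline\beta$ and with suprema and infima interchanged.

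The upper bound in each subcase follows from routine $\eta$-estimates: for any $\eta>0$ and all sufficiently small $r$, one has $\beta(t)\leq\overline\beta+\eta$ and $\gamma(t)\geq\underline\gamma-\eta$ a.e.\ on $(0,r)$, and the ratio is bounded accordingly. The matching lower bound is precisely where the compatibility hypothesis enters: the existence of a common sequence $r_n\to 0$ on which $\beta$ and $\gamma$ simultaneously realize the required pair of limit points supplies a test set of positive measure on which $\Psi_0$ is as close as desired to the purported extremum; without this joint realization, the ess-sup of a ratio need not coincide with the ratio of ess-sup and ess-inf.

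The only real obstacle is the sign bookkeeping in the case split; no PDE machinery is needed, and no limit is obstructed because $1+\gamma$ is uniformly bounded away from zero by hypothesis. In the intended applications (Theorem~\ref{unstable} and the examples of Section~5) the envelopes at $g=\mathrm{I}$ are what is actually used, so no further optimization over $g\in GL_N$ is needed for the statement to be directly applicable.
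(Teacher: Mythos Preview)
Your computation of $\Psi(x)$ is correct and matches the paper's. The gap is in the last two formulas of~\eqref{doipsi}. By definition~\eqref{simpler_Psi_eq}, $\overline{\overline{\Psi}}=\inf_{g\in GL_N}\lim_{r\to0}\esssup_{B_r}\Psi_g$ and $\underline{\underline{\Psi}}=\sup_{g\in GL_N}\lim_{r\to0}\essinf_{B_r}\Psi_g$; both involve an optimization over \emph{all} $g\in GL_N$. Your argument works only at $g=\mathrm{I}$, so it yields $\overline{\overline{\Psi}}\le 1+(N-1+\overline\beta)/(1+\underline\gamma)$ and $\underline{\underline{\Psi}}\ge 1+(N-1+\underline\beta)/(1+\overline\gamma)$ (in the case $\underline\beta\ge 1-N$), but not the reverse inequalities. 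Your closing remark that ``no further optimization over $g\in GL_N$ is needed'' conflates the lemma with its downstream use: the lemma asserts an equality, and half of it is unproved.

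The paper fills this gap by computing $\Psi_g$ explicitly for arbitrary $g$: after writing $\mathbf a_g(x)=gg^\top+\gamma(|g^{-1}x|)\,x\otimes x/|g^{-1}x|^2$ and $\mathbf b_g(x)=\beta(|g^{-1}x|)\,x/|g^{-1}x|^2$, one obtains a formula for $\Psi_g(x)$ involving the quantities $|x|^2/|g^{-1}x|^2$ and $|g^\top x|^2|g^{-1}x|^2/|x|^4$. These are then controlled via the singular values of $g$ and the Kantorovich inequality $1\le |g^\top x|^2|g^{-1}x|^2/|x|^4\le\tfrac14(\lambda_{\max}/\lambda_{\min}+\lambda_{\min}/\lambda_{\max})^2$, and the resulting two-sided bounds for $\Psi_g$ are optimized over $g$ to show that the extremum is attained precisely when $gg^\top$ is a scalar multiple of $\mathrm{I}$. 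This is the missing half of your argument, and it is not a formality: for a generic matrix $\mathbf a$ one would not expect the identity to be optimal.
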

\begin{remark}\label{remga}
Note that, for a bounded measurable function $\gamma:(0,R)\to \mathbb{R}$,
\[
\liminf_{r\to 0}\gamma(r)=\lim_{r\to 0}\essinf\limits_{\tau\in(0,r)}\gamma(\tau)\quad\mbox{ and }\quad
\limsup_{r\to 0}\gamma(r)=\lim_{r\to 0}\esssup\limits_{\tau\in(0,r)}\gamma(\tau)
\]
if and only if
for all $\alpha>\liminf_{r\to 0}\gamma(r)$ and all  $\beta<\limsup_{r\to 0}\gamma(r)$ the sets
\[
\{r>0:\gamma(r)<\alpha\}\quad\mbox{ and }\quad \{r>0: \gamma(r)>\beta\}
\]
have positive Lebesgue measure. This condition obviously holds
if $\gamma$ is a continuous or a monotone function on $(0,R)$.
However, it may fail for an oscillating semi-continuous function. For
instance, consider the following function:
\[
\gamma(r)=\begin{cases}
1 & \mbox{for }r=\tfrac1n,\ n\geq 1;\\
0 & \mbox{otherwise.}
\end{cases}
\]
\end{remark}

\begin{proof} We give the proof for the case $\underline\beta\geq 1-N$ only; the other cases being similar.

Let us first note that for any $g\in GL_N$ we have
\[
\begin{split}
a_g(x)& =ga(g^{-1}x)g^\top = gg^\top
+ \gamma(|g^{-1}x|)\frac {x\otimes x}{|g^{-1}x|^2},\\
b_g(x)& =gb(g^{-1}x) = \beta(|g^{-1}x|)\frac {x}{|g^{-1}x|^2}.
\end{split}
\]
Thus, with $\gamma$ and $\beta$ standing for $\gamma(|g^{-1}x|)$ and $\beta(|g^{-1}x|)$,
respectively,
\[
\Psi_{g}(x)= \frac{{\rm Tr}\, (gg^\top) + (\gamma+\beta)\frac{|x|^2}{|g^{-1}x|^2} }
{\frac{|g^\top x|^2}{|x|^2} + \gamma\frac{|x|^2}{|g^{-1}x|^2}}
= \frac{\frac{{\rm Tr}(gg^\top)}{\frac{|x|^2}{|g^{-1}x|^2}} +  \gamma+\beta}
{\frac{|g^\top x|^2|g^{-1}x|^2}{|x|^4} +\gamma}.
\]
Let $\lambda_{\rm{min}}$ and $\lambda_{\rm{max}}>0$ be the minimal and the maximal singular values of the matrix $g$, that is, their squares are the correspondent eigenvalues of $gg^\top$ and $g^\top g$. Then we have
\[
\lambda_{\rm{min}}^2\leq \frac{|x|^2}{|g^{-1}x|^2}  \leq \lambda_{\rm{max}}^2,
\]
with equality on one side if $x$ is an eigenvector of $gg^*$ corresponding to $\lambda_{\rm min}^2$, respectively $\lambda_{\rm max}^2$. Moreover, by the Kantorovich inequality (see, e.g. \cite[Theorem 6.27]{fzhang}) and Cauchy-Schwartz inequality, we find
\begin{equation}\label{matr-est}
1\leq \frac{|g^\top x|^2|g^{-1}x|^2}{|x|^4} \leq \sigma(g):=\frac14\Big(\frac{\lambda_{\rm{max}}}{\lambda_{\rm{min}}}
+ \frac{\lambda_{\rm{min}}}{\lambda_{\rm{max}}}\Big)^2.
\end{equation}

 Obviously,~\eqref{matr-est} becomes
an equality for any $x$ if $g=\lambda\mathrm{I}.$
Hence
\[
1+\frac{1}{\sigma(g)+\gamma}
\left(\frac{{\rm Tr}(gg^\top)}{\lambda_{\rm max}^2} - \sigma(g) + \beta\right)\leq \Psi_{g}(x)\leq 1 + \frac{1}{1+\gamma}
\left(\frac{{\rm Tr}(gg^\top)}{\lambda_{\rm min}^2} - 1 + \beta\right),
\]
with equality on one side if $x$ is an eigenvector of $gg^*$ corresponding to $\lambda_{\rm min}$, respectively to $\lambda_{\rm max}$.
It follows that
\[
\begin{split}
\lim_{r\rightarrow 0} \esssup\limits_{x\in B_r\setminus\{0\}}\Psi_{g}(x) & \leq 1 +\frac{1}{1+ \underline\gamma}
\left(\frac{{\rm Tr}(gg^\top)}{\lambda_{\rm{min}}^2} -1 + \overline\beta\right),\\
\lim_{r\rightarrow 0} \essinf\limits_{x\in B_r\setminus\{0\}}\Psi_{g}(x) & \geq 1 +\frac{1}{\sigma(g)+ \overline\gamma}
\left(\frac{{\rm Tr}(gg^\top)}{\lambda_{\rm{max}}^2} - \sigma(g) + \underline\beta\right).
\end{split}
\]
Finally, observe that
\[
\min\limits_{g\in GL_N}\frac{{\rm Tr}\, (gg^\top)}{\lambda^2_{\rm{min}}} = \max\limits_{g\in GL_N}\frac{{\rm Tr}\, (gg^\top)}{\lambda^2_{\rm{max}}} = N \mbox{ and }\min\limits_{g\in GL_N}\sigma(g)=1,
\]
with all extrema attained for matrices $g$ such that $gg^\top=\lambda \rm{I}$ for some $\lambda>0$.
Hence
\[
\overline{\overline{\Psi}} =1 + \frac{N-1+\overline\beta}{1+\underline\gamma}\quad\mbox{and }\quad
\underline{\underline{\Psi}}= 1+ \frac{N-1+\underline\beta}{1+\overline\gamma}.
\]
\end{proof}

\section{Emden-Fowler-type equation}

The proof of Theorem \ref{main_technical} relies essentially on the study of the following final value problem for the ODEs:
\begin{equation}\label{aaw4}
\left\{
\begin{aligned}
&v''+\frac{\phi(r) }{r}v' =\theta(r)|v|^{p-1}v\quad \mbox{ on } (0, R),\\
&v(R)=M \,,\,v'(R)=\lambda\,,
\end{aligned}
\right.
\end{equation}
where $R>0$, $M\geq0$, $\lambda\in\mathbb{R}$, $\phi\in L^\infty(0,R)$ and
$\theta:(0,R)\rightarrow (0,\infty)$ is a measurable function such that $\essinf\theta>0$ and $\int_\varepsilon^R\theta(r)dr<\infty$ for all $\varepsilon>0$ small.

We introduce the following notation:
\[
\Gamma(r):=\mathrm{exp}\Big\{-\int_r^R\phi(\tau)\frac{d\tau}\tau\Big\}
\quad\mbox{ and }\quad t(r):=\int_r^R\frac{d\rho}{\Gamma(\rho)}.
\]
\begin{theorem}\label{emden-fowler-thrm}
(i) Assume that
\begin{equation}\label{superpessimal}
\int_0^{R}\theta(r)\Gamma(r)t^p(r)dr<\infty.
\end{equation}
Then there exists $M>0$ and $\lambda\leq0$ such that the
(locally unique) solution $v$ to~\eqref{aaw4} can be extended
to the interval $(0,R)$ and $v(r)\to\infty$ as $r\to0$.

(ii) Assume that
\begin{equation}\label{superoptimal}
\int_0^R\theta(r)\Gamma^{1-p}(r)r^pdr =\infty.
\end{equation}
Then, for every $M>0$ and $\lambda\leq0$, the
(locally unique) solution $v$ to~\eqref{aaw4} cannot be extended
to the interval $(0,R)$, that is, there exists $R'\in(0,R)$ such that
the solution $v$ can be extended to the interval $(R',R)$ and
$v(r)\to\infty$ as $r\to R'$.
\end{theorem}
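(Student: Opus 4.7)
The plan is to recast~\eqref{aaw4} via the integrating factor. Since $\Gamma'/\Gamma=\phi/r$, the equation is equivalent to $(\Gamma v')'=\Gamma\theta|v|^{p-1}v$; integrating twice with the final data gives the Volterra representation
\[
v(r)=M-\lambda t(r)+\int_r^R\Gamma(\rho)\theta(\rho)|v|^{p-1}v(\rho)\bigl(t(r)-t(\rho)\bigr)\,d\rho.
\]
Equivalently, via $t=t(r)$ and $w(t)=v(r(t))$, the system becomes the Emden--Fowler equation $w''=\tilde\theta(t)w^p$, $w(0)=M$, $w'(0)=-\lambda\geq 0$, on $[0,T^*)$ with $\tilde\theta=\Gamma^2\theta$ and $T^*=t(0)\in(0,\infty]$. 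Since $w,w'\geq 0$, solutions are convex and nondecreasing, so either extend to $[0,T^*)$ or blow up at some $T'\leq T^*$. Parts (i) and (ii) correspond respectively to the attainability and unattainability of the boundary blow-up $T'=T^*$. The transformed conditions read: \eqref{superpessimal} becomes $\int_0^{T^*}\tilde\theta(t)\,t^p\,dt<\infty$, and \eqref{superoptimal} becomes $\int_0^{T^*}\tilde\theta(t)(r(t)/\Gamma(r(t)))^p\,dt=\infty$.

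For part (i), I would run a Banach fixed point for the operator $Tv(r)=M-\lambda t(r)+\int_r^R\Gamma\theta v^p(t(r)-t(\rho))\,d\rho$ on the weighted space $\{v\in C((0,R]):\|v\|=\sup_r v(r)/(1+t(r))<\infty\}$. Hypothesis \eqref{superpessimal} provides $K:=\int_0^R\Gamma\theta(1+t)^p\,dr<\infty$; choosing $C>0$ small enough that $pC^{p-1}K<1/2$ and $M+|\lambda|$ correspondingly small, standard estimates show $T$ is a $\tfrac12$-contraction on the ball of radius $C$, whose unique fixed point solves the ODE on $(0,R)$. When $t(r)\to\infty$ (i.e.\ $T^*=\infty$), selecting $\lambda<0$ yields $v(r)\geq|\lambda|t(r)\to\infty$. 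When $T^*<\infty$ the fixed point is bounded; in that regime I would switch to a shooting argument exploiting continuity of the blow-up time $T_{\text{blow}}(M,|\lambda|)$ in the data: it exceeds $T^*$ for small data (by the fixed point) and is less than $T^*$ for large data (by classical Emden--Fowler blow-up), so intermediate data give $T_{\text{blow}}=T^*$, hence a singular solution.

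For part (ii) I would argue by contradiction: suppose $v$ extends to $(0,R)$. From $(\Gamma v')'=\Gamma\theta v^p\geq 0$ together with $\Gamma v'|_R=\lambda\leq 0$, we get $\Gamma v'\leq 0$ on $(0,R)$, hence $v'\leq 0$ and $v\geq M$. The idea is to combine the Keller--Osserman ceiling $v(r)\leq C r^{-2/(p-1)}$ (via Proposition~\ref{KO} applied to the radial PDE represented by the ODE) with iterated lower bounds on $v$ obtained from the Volterra representation starting from $v\geq M$; the successive lower bounds bring into play, as they are refined, the integrand $\theta\Gamma^{1-p}\rho^p$. Matching the iterated lower bound against the K--O upper bound forces $\int_0^R\theta\Gamma^{1-p}r^p\,dr<\infty$, in contradiction with \eqref{superoptimal}. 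Hence $v$ cannot reach $r=0$, so it blows up at some $R'\in(0,R)$.

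The hard part is part (ii): extracting from the divergence of $\int_0^R\theta\Gamma^{1-p}r^p\,dr$ a quantitative obstruction to global existence. Equivalently, on the Emden--Fowler side $w''=\tilde\theta w^p$ one needs a refined energy estimate that tracks the time-dependent coefficient $\tilde\theta$ (which may oscillate) and respects the potentially finite horizon $T^*$; this precludes a direct application of the classical Mambriani/Bellman approach and requires a careful adaptation combined with the Keller--Osserman ceiling (and, at the PDE level behind \eqref{aaw4}, the Harnack inequality for Fuchsian-type operators) to convert the divergence into blow-up before $r=0$.
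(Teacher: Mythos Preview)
Your approach via a Banach fixed point in the weighted space $\sup_r v(r)/(1+t(r))$ is different from the paper's (which uses a direct Gronwall-type estimate in Lemma~\ref{exist_vel0}) but is equally valid for producing a global solution under~\eqref{superpessimal}. Your treatment of the singularity is correct when $T^*=\infty$. When $T^*<\infty$, however, your shooting argument appeals to continuity of the blow-up time and to the statement ``$T_{\text{blow}}>T^*$ for small data''; since the equation is only defined on $[0,T^*)$, the latter is ill-posed as stated. The paper circumvents this by setting $\Lambda=\{\lambda\leq 0:\ v_\lambda\text{ extends to }(0,R)\text{ bounded}\}$, taking $\lambda_0=\inf\Lambda$, and showing (by comparison with an auxiliary blow-up solution furnished by Lemma~\ref{blow-up}) that $v_{\lambda_0}$ extends to $(0,R)$ and cannot be bounded. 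Your idea is morally the same, but you should replace the continuity-of-blow-up-time heuristic by this infimum argument.

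\textbf{Part (ii).} Here there is a genuine gap. Your main proposal---``iterated lower bounds on $v$ starting from $v\geq M$''---would, after feeding $v\geq M$ back into the Volterra representation, generate integrands of the form $\theta\Gamma t^k$, not $\theta\Gamma^{1-p}r^p$; it is not at all clear how the quantity $r/\Gamma(r)$ would emerge from that iteration. The factor $r/\Gamma(r)$ is exactly what the Harnack step (which you mention only parenthetically) supplies, and in the paper it is the centerpiece of the argument, not an aside. Concretely: the Keller--Osserman bound gives $v\leq cr^{-2/(p-1)}$, so the linearized potential $v^{p-1}$ is $\leq c|x|^{-2}$ and the associated radial operator is Fuchsian; Harnack then yields $v(r/2)\leq Cv(r)$. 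Combining this with the monotonicity of $\Gamma v'$ and the elementary bound $\Gamma(r)/\Gamma(\rho)\geq 2^{-\|\phi\|_\infty}$ for $\rho\in(r/2,r)$ gives $-v'(r)\leq Cv(r)/r$. Inserting this into $-\Gamma v'(r)=\int_r^R\theta\Gamma v^p$ yields
\[
v(r)\ \geq\ \frac{cr}{\Gamma(r)}\,V(r),\qquad V(r):=\int_r^R\theta(\rho)\Gamma(\rho)v^p(\rho)\,d\rho,
\]
hence $-V'(r)=\theta\Gamma v^p\geq C\,\theta(r)\,r^p\,\Gamma^{1-p}(r)\,V^p(r)$. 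Integrating this differential inequality over $(0,R/2)$ directly contradicts~\eqref{superoptimal}. You should reorganize your argument so that the Harnack inequality is the mechanism that converts the Keller--Osserman ceiling into the pointwise relation $-rv'\lesssim v$, since that relation is what produces the integrand in~\eqref{superoptimal}.
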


\begin{remark}\label{comparison}
(i) The equation in \eq{aaw4} is equivalent to the following:
\begin{equation}\label{aaw5}
\Big(\Gamma v'\Big)' = \theta\Gamma |v|^{p-1}v.
\end{equation}

(ii) If $\lambda\leq0$ then $v$ is a positive decreasing function.

(iii) For every $M>0$ there exists $\lambda>0$ and $R'\in(0,R)$ such that $v(R')>0$ and $v'(R')=0$. Indeed, assume the contrary. Then there exists $M>0$ such that, for every $\lambda>0$, the solution $v_\lambda$ of~\eqref{aaw4} satisfies $v_\lambda'(r)>0$ on the interval $\{r:v_\lambda(r)>0\}$. Since $v_\lambda$ is continuous in $\lambda$, it follows that $v_0$ (the solution to~\eqref{aaw4} with $\lambda=0$) is a non-decreasing function in a neighborhood of $R$. However, this contradicts to (ii).

(iv) For $p\geq 0$, the functions $v$ and $-v'$ increase in $M$ and $\theta$ and decrease in $\lambda$. Indeed, assume that the statement is false. Then there exist $M_0<M_1$, $\lambda_0>\lambda_1$, $\theta_0\leq\theta_1$ the corresponding solutions $v_0$, $v_1$ and $R'\in (0,R)$ such that $v_0(r)<v_1(r)$ and $v_0'(r)>v_1'(r)$ for $r\in(R',R)$ and $v_0(R')=v_1(R')$ or $v_0'(R')=v_1'(R')$.
Then,
\[
v_0(R')-v_1(R') = M_0 - M_1 - \int_{R'}^{R}\big(v_0'(r)-v_1'(r)\big)dr<0.
\]
and, by~\eqref{aaw5},
\[
\begin{split}
v_0'(R') &- v_1'(R') =\\
& = \frac{\lambda_0-\lambda_1}{\Gamma(R')} + \frac1{\Gamma(R')}\int_{R'}^{R}\Gamma(r)\big(\theta_1(r)|v_1(r)|^{p-1}v_1(r)-\theta_0(r)
|v_0(r)|^{p-1}v_0(r)\big)dr>0.
\end{split}
\]
\end{remark}

The proof of Theorem~\ref{emden-fowler-thrm}(i) is divided into several propositions,
partly inspired by~\cite{kig}.

\begin{lemma}\label{exist_vel0} Assume that~\eqref{superpessimal} holds and let $p>1$.
Then there exists $M>0$ such that the (locally unique) solution to~\eqref{aaw4} with $\lambda=0$ is a decreasing function which can be extended to the interval $(0,R)$.
\end{lemma}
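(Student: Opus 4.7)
The plan is to prove the lemma in three steps: (a) local existence and strict monotonicity near $r=R$; (b) construction of a supersolution to the Volterra integral equation associated with~\eqref{aaw4}; and (c) monotone iteration producing the extended solution on $(0,R]$.

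For (a), on $[R-\delta,R]$ the coefficients $\phi(r)/r$ and $\theta(r)$ are locally bounded/integrable and the nonlinearity $v^p$ is locally Lipschitz in $v$ as long as $v$ stays away from $0$, so standard ODE theory (Carath\'eodory) yields a unique local solution with $v(R)=M$ and $v'(R)=0$. Rewriting the equation as $(\Gamma v')' = \theta\Gamma v^p$, at $r=R$ the right side equals $\theta(R)\Gamma(R)M^p > 0$ while $\Gamma(R)v'(R)=0$; hence $\Gamma v'$ passes strictly through zero, and $\Gamma(r)v'(r)<0$ for $r<R$ nearby. Thus $v'(r)<0$ and $v(r)>M$ there, and this sign pattern propagates: on any interval of existence to the left of $R$, $v$ is strictly decreasing with $v \ge M > 0$.

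For (b), integrating $(\Gamma v')' = \theta\Gamma v^p$ twice against the boundary data and using Fubini yields the Volterra equation
\[
v(r) = M + \int_r^R \theta(s)\Gamma(s)v^p(s)\bigl(t(r)-t(s)\bigr)ds =: F[v](r).
\]
I take the candidate supersolution $\bar v(r) := M + K t(r)$ with $K>0$ to be chosen. From $(t(r)-t(s))\le t(r)$ and $(M+K t(s))^p \le 2^{p-1}(M^p + K^p t^p(s))$,
\[
F[\bar v](r) \le M + 2^{p-1} t(r)\bigl(M^p Q_0 + K^p J\bigr),\quad J := \int_0^R \theta\Gamma t^p\,ds,\quad Q_0 := \int_0^R \theta\Gamma\,ds.
\]
Hypothesis~\eqref{superpessimal} gives $J<\infty$ directly. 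For $Q_0<\infty$, change variables $\tau=t(r)$, $T:=t(0)\in(0,\infty]$: the integral becomes $\int_0^T\Phi(\tau)d\tau$ with $\Phi:=\theta\Gamma^2$, where $\Phi$ is locally integrable on $[0,T)$ (from $\theta\in L^1_{\mathrm{loc}}((0,R])$ and $\Gamma$ locally bounded on $(0,R)$), and the estimate $\Phi \le \tau_0^{-p}\Phi\tau^p$ valid for $\tau\ge\tau_0>0$, together with $\int_0^T\Phi\tau^p d\tau<\infty$, controls the tail at $\tau=T$. Choose $K=\gamma J^{-1/(p-1)}$ with $\gamma>0$ small enough that $2^{p-1}K^p J\le K/2$; then $F[\bar v]\le\bar v$ reduces to $2^{p-1}M^p Q_0\le K/2$, which holds for every sufficiently small $M>0$.

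For (c), since $p>1$ the map $F$ is order-preserving on non-negative functions. Starting from $v_0\equiv M$, the iteration $v_{n+1}=F[v_n]$ is non-decreasing and dominated by $\bar v$, so it converges by monotone convergence to $v$ with $v=F[v]$ and $M\le v(r)\le M+K t(r)$ on $(0,R]$. Differentiating the integral equation (Leibniz rule, together with $t'(r)=-1/\Gamma(r)$ and $(\log\Gamma)'(r)=\phi(r)/r$) promotes $v$ to a solution of~\eqref{aaw4} with $v(R)=M$ and $v'(R)=0$, and local uniqueness identifies $v$ with the Carath\'eodory solution from (a), providing the desired extension to $(0,R]$; part (a) applied globally yields strict monotonicity throughout. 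The main technical obstacle is the choice of supersolution: a constant ansatz $\bar v\equiv 2M$ fails because $\int_r^R\theta\Gamma(t(r)-t(s))ds$ need not remain bounded as $r\to 0$, while the linear-in-$t$ ansatz $\bar v=M+K t(r)$ absorbs this growth, at the cost of having to extract $Q_0<\infty$ from~\eqref{superpessimal} in addition to the immediate bound $J<\infty$.
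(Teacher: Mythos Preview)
Your proof is correct and follows a genuinely different route from the paper. The paper argues by a priori estimate on the local solution: from the same integral identity you derive, it obtains $v(r)\le M + t(r)V(r)$ with $V(r)=\int_r^R\theta\Gamma v^p$, then splits into the cases $v<2M$ everywhere or $v(r_0)=2M$ for some $r_0$; in the latter case it deduces the nonlinear Gronwall-type inequality $-V'\le 2^p\theta\Gamma t^p V^p$ on $(0,r_0)$ and integrates to bound $V$, provided $V(r_0)\le (2M)^pQ_0$ is small enough. You instead construct the global solution directly by monotone iteration of the Volterra operator $F$ below the explicit supersolution $\bar v=M+Kt(r)$. Both arguments rest on the same two quantities $J=\int_0^R\theta\Gamma t^p$ (the hypothesis~\eqref{superpessimal}) and $Q_0=\int_0^R\theta\Gamma$; the paper uses $Q_0<\infty$ implicitly in its bound on $V(r_0)$, while you give an explicit derivation via the substitution $\tau=t(r)$, which is correct. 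Your approach buys a slightly more elementary argument---the nonlinear Gronwall step is replaced by a one-line supersolution check and monotone convergence---while the paper's continuation argument stays closer to the ODE and avoids having to pass to a limit.
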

\begin{proof}
Due to Remark~\ref{comparison}(ii), we are left to prove that $v$ can be extended to the interval $(0,R)$.
Integrate~\eqref{aaw5} to obtain the following:
\begin{equation}\label{pre-gronwall}
\begin{split}
v(r) = & M - \int_r^{R}v'(\varrho)d\varrho = M + \int_r^{R}\frac1{\Gamma(\varrho)}\int_{\varrho}^{R}\theta(\rho)\Gamma(\rho)v^{p}(\rho)d\rho\, d\varrho
\\
= & M + \int_r^{R}\theta(\rho)\Gamma(\rho)v^{p}(\rho) \int_r^\rho\frac1{\Gamma(\varrho)}d\varrho\,d\rho
\\
\leq & M + \int_r^{R}\frac1{\Gamma(\varrho)}d\varrho \int_r^{R}\theta(\rho)\Gamma(\rho)v^{p}(\rho)d\rho
\\
= & M + t(r)\int_r^{R}\theta(\rho)\Gamma(\rho)v^{p}(\rho)d\rho.
\end{split}
\end{equation}
Then \eqref{pre-gronwall} implies the following bound:
\[
v(r)\leq M + t(r)V(r)
\]
with
\[
V(r):= \int_r^{R}\theta(\rho)\Gamma(\rho)v^{p}(\rho)d\rho.
\]
Since $v$ is a decreasing function, one has either $v< 2M$ on $(0,R)$ (in particular, then
$v$ can be extended to the interval $(0,R)$ as a bounded solution to~\eqref{aaw4}), or
there exists a unique $r_0\in(0,R)$ such that $v(r_0)=2M$. Hence
$v(r)-M\geq \frac12 v(r)$ for $r<r_0$ and
\begin{equation}\label{gronwall_ex}
    v(r)\leq 2t(r)V(r)\quad \mbox{ for } r<r_0.
\end{equation}

It follows that
\[
-V'(r)=\theta(r)\Gamma(r)v^{p}(r)\leq 2^p\theta(r)\Gamma(r)t^p(r)V^p(r)\quad  \mbox{ for } r<r_0.
\]
Let
\[
\Phi(r):= 2^p\int_r^{R}\theta(\rho)\Gamma(\rho)t^p(\rho)d\rho.
\]
Note that~\eqref{superpessimal} implies $\Phi(r)<\Phi(0)<\infty$.

The next estimate holds:
\[
V^{1-p}(r_0) - V^{1-p}(r) \leq (p-1)\Phi(r)\leq (p-1)\Phi(0) \quad \mbox{ for } r<r_0.
\]
So
\[
V(r)\leq \Big(V^{1-p}(r_0) - (p-1)\Phi(0)\Big)^{-\frac1{p-1}} \quad \mbox{ for } r<r_0,
\]
provided
\begin{equation}\label{prov}
V(r_0)< \Bigg((p-1)2^p\int_0^{R}\theta(\rho)\Gamma(\rho)t^p(\rho)d\rho\Bigg)^{-\frac1{p-1}}.
\end{equation}
Since $v\in (M,2M)$ on $(r_0,R_0)$, it follows that
\[
V(r_0)=\int_{r_0}^{R}\theta(\rho)\Gamma(\rho)v^{p}(\rho)d\rho \leq (2M)^p\int_{0}^{R}\theta(\rho)\Gamma(\rho)d\rho.
\]
Thus~\eqref{prov} holds for a sufficiently small $M$ since~\eqref{superpessimal} holds.
Hence we conclude that $V$ uniformly bounded on $(0,R)$. Finally,~\eqref{gronwall_ex} implies
that $v$ can be extended to $(0,R)$ as a solution to~\eqref{aaw4}.
\end{proof}

\begin{lemma}\label{blow-up}
For every $p>1$ and $M\geq0$ there exists $\lambda<0$ such that the (locally unique) solution $v$ to~\eqref{aaw4} cannot be extended to the interval $(0,R)$, that is, there exists $R'\in(0,R)$ such that
$v$ can be extended to the interval $(R',R)$ and $v(r)\to\infty$ as $r\to R'$.
\end{lemma}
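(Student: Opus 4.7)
The plan is to eliminate the first-order term in~\eqref{aaw4} by the change of variable suggested in Remark~\ref{comparison}(i), then apply a standard energy blow-up argument to the resulting Emden--Fowler equation, and finally arrange parameters so that the blow-up in the new variable occurs within a prescribed window, corresponding to $r^*>0$.

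Writing the equation as $(\Gamma v')'=\theta\Gamma v^p$ and setting
\[
t(r):=\int_r^R\frac{d\rho}{\Gamma(\rho)},\qquad w(t):=v(r(t)),
\]
a direct computation reduces~\eqref{aaw4} to
\[
w''(t)=\tilde\theta(t)\,w^p(t),\quad w(0)=M,\quad w'(0)=-\Gamma(R)\lambda>0,
\]
where $\tilde\theta(t):=\theta(r(t))\Gamma^2(r(t))$. Blow-up of $w$ at some $t^*<t(0)$ is then equivalent to blow-up of $v$ at $r^*=r(t^*)>0$, which is exactly what is needed.

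First I would fix $T^*\in(0,t(0))$ and observe that $\Gamma$ is bounded below by a positive constant on $[r(T^*),R]$, so $\tilde\theta_0:=\essinf_{(0,T^*)}\tilde\theta>0$. Since $w''\geq 0$, the derivative $w'$ is nondecreasing and $w(t)\geq M+|\lambda|\Gamma(R)\,t$; in particular $w$ is convex increasing. Multiplying $w''\geq\tilde\theta_0 w^p$ by $w'>0$ and integrating from $0$ to $t$ produces the classical Lyapunov-type estimate
\[
(w'(t))^2\geq\Gamma(R)^2\lambda^2-\frac{2\tilde\theta_0}{p+1}M^{p+1}+\frac{2\tilde\theta_0}{p+1}w^{p+1}(t).
\]
For $|\lambda|$ large enough that the first two terms on the right are nonnegative, this gives the differential inequality $w'\geq C\,w^{(p+1)/2}$ with $C:=\sqrt{2\tilde\theta_0/(p+1)}$, which by separation of variables yields blow-up of $w$ within additional time at most $\frac{2}{(p-1)C}\,w(t_1)^{-(p-1)/2}$ starting from any level $w(t_1)$.

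Combining these ingredients, the linear lower bound $w(t)\geq M+|\lambda|\Gamma(R)\,t$ ensures that $w$ reaches any prescribed threshold $W_0\geq M$ within time $t_1\leq (W_0-M)/(|\lambda|\Gamma(R))$. I would first choose $W_0$ so large that $\frac{2}{(p-1)C}W_0^{-(p-1)/2}<T^*/2$, and then take $|\lambda|$ so large that $t_1<T^*/2$ and the energy estimate above is activated. Then $w$ blows up at some $t^*<T^*<t(0)$, hence $v$ blows up at $r^*=r(t^*)\in(r(T^*),R)\subset(0,R)$, proving the lemma. The main technical point is purely bookkeeping: arranging that the blow-up time in the $t$-variable fits strictly inside $(0,t(0))$, which is settled by the freedom to take $\lambda$ arbitrarily negative.
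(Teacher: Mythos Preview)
Your argument is correct. The change of variables is the same as in the paper (note $\Gamma(R)=1$, so $w'(0)=-\lambda$), and the key observation that $\tilde\theta_0=\essinf_{(0,T^*)}\tilde\theta>0$ follows from $\essinf\theta>0$ together with the fact that $\Gamma$, being the exponential of a bounded integral, is bounded away from zero on the compact interval $[r(T^*),R]\subset(0,R]$. The subsequent energy inequality and separation-of-variables blow-up estimate are the standard argument for autonomous Emden--Fowler equations with a positive constant coefficient, and your two-step arrangement (first reach level $W_0$ by time $T^*/2$ via the linear convexity bound, then blow up within the remaining $T^*/2$) is exactly the right way to fit the blow-up time inside $(0,t(0))$.

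The paper proceeds quite differently. It argues by contradiction, assuming global existence for every $\lambda<0$, then uses the monotonicity in Remark~\ref{comparison}(iv) to reduce to the case $\omega\leq1$ and $M=0$. From $\omega\leq1$ it derives an \emph{upper} bound $(v')^2\leq\lambda^2+\tfrac{2}{p+1}v^{p+1}$, combines it with the convex lower bound $v(t)\geq S|\lambda|$ on $(S,T)$ to absorb $\lambda^2$ into $v^{p+1}$, and then through a further integration shows $\int_S^T\omega\,d\tau\leq C\,S$, a contradiction for $S$ small. Your route is more direct and more elementary: you exploit the \emph{lower} bound on $\tilde\theta$ over a compact $t$-interval rather than reducing to an upper bound, which immediately yields the differential inequality $w'\geq Cw^{(p+1)/2}$ and the classical finite-time blow-up. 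The paper's detour through the reduction $\omega\leq1$ and the subsequent chain of integral estimates is avoided entirely; what it buys is that one never needs to localise to a compact window, but since blow-up is inherently a local statement, your localisation costs nothing.
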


\begin{proof}
Assume that $v$ can be extended to the interval $(0,R)$ as a solution to~\eqref{aaw4} for every $\lambda<0$. Note that change of variables $t=t(r)$ is a diffeomorphism $(0,R)\to (0,T)$ with $T=t(0)\in (0,\infty]$, which transforms~\eqref{aaw4} into the following initial value problem:
\begin{equation}\label{aaw6}
\begin{cases}
v^{\prime\prime} = \omega(t)|v|^{p-1}v & \quad \mbox{ on }(0,T),\\
v(0)=M,\quad v^\prime(0)=-\lambda,&
\end{cases}
\end{equation}
with $\omega(t):=\theta\big(r(t)\big)\Gamma^2\big(r(t)\big)$ independent on $\lambda$.

Similarly to Remark~\ref{comparison}(iv), $v$ and $v'$ increase in $M$ and $\omega$, and decrease in $\lambda$. Hence, it suffices to consider the case $\omega\leq 1$ and $M=0$.

Multiply~\eqref{aaw6} by $2v'>0$ and integrate from $0$ to $t$. Since $\omega\leq1$, one has
\neweq{split1}
\begin{aligned}
|v'(t)|^2  &= \lambda^2 + 2\int_0^{t}\omega(\tau)v^p(\tau)v'(\tau)d\tau
\\
& \leq  \lambda^2 + 2\int_0^{t}v^p(\tau)v'(\tau)d\rho
\leq \lambda^2 +\tfrac2{p+1} v^{p+1}(t).
\end{aligned}
\endeq
Note that, for $\lambda\leq0$, the solution $v$ is a convex increasing function. Let $0<S<T$. Then, for all $t\in (S,T)$ we have
\[
v(t)=v(t)-v(0)\geq tv'(0)\geq S|\lambda|.
\]
Therefore, for $|\lambda|>S^{-\frac{p+1}{p-1}}$,
\[
\lambda^2 \leq |S\lambda|^{p+1} \leq v^{p+1}(t) \quad  \mbox{ for all }  t\in(S,T).
\]
Uing this fact in \eq{split1},  there exists $c>0$ such that
\[
|v'(t)|^2 \leq c^2 v^{p+1}(r) \quad \mbox{ for all } t\in(S,T).
\]
Thus, integrating in \eqref{aaw6} it follows that
\[
|\lambda| + \int_0^{t}\omega(\tau)v^p(\tau)d\tau = v'(t) \leq c v^{\frac{p+1}2}(t) \quad \mbox{ for all } t\in(S,T).
\]
Now consider the function $V$ defined as
\[
V(t):= |\lambda| + \int_{S}^t\omega(\tau)v^p(\tau)d\tau \quad \mbox{ for all } t\in(S,T).
\]
The preceding estimates yield
\[
V(t)\leq v'(t)\leq c\Bigg(\frac{V'(t)}{\omega(t)}\Bigg)^{\frac{p+1}{2p}}
\Longleftrightarrow
\omega(t)\leq C V'(t)V^{-\frac{2p}{p+1}}(t) \quad \mbox{ for all } t\in(S,T),
\]
with $C=c^{\frac{2p}{p+1}}>0$. Hence
\[
\int_{S}^T\omega(\tau)d\tau \leq C\tfrac{p+1}{p-1}V^{-\frac{p-1}{p+1}}(S)
= C\tfrac{p+1}{p-1} |\lambda|^{-\frac{p-1}{p+1}}.
\]
Since $|\lambda|>S^{-\frac{p+1}{p-1}}$ it follows that
\[
\int_{S}^T\omega(\tau)d\tau \leq C\tfrac{p+1}{p-1}S.
\]
We can now choose $S>0$ sufficiently small such that the above estimate leads to a contradiction.
\end{proof}

\begin{corollary}\label{exist_gen}
Let $p>1$ and $M>0$. Assume that the (locally unique) solution $v_0$ to~\eqref{aaw4} with $\lambda=0$ can be extended to the interval $(0,R)$. Then there exists $\lambda\leq0$ such that the solution $v$ to~\eqref{aaw4} can be extended to the interval $(0,R)$ and $v(r)\to\infty$ as $r\to0$.
\end{corollary}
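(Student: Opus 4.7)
Set $\mathcal{A}:=\{\lambda\leq 0:\; v_\lambda\text{ extends to }(0,R)\}$ and $\lambda_0:=\inf\mathcal{A}$. By hypothesis $0\in\mathcal{A}$; by Lemma~\ref{blow-up} some $\lambda^{**}<0$ lies outside $\mathcal{A}$; and by Remark~\ref{comparison}(iv) ($v_\lambda$ non-increasing in $\lambda$) the set $\mathcal{A}$ is upward-closed, since if $\lambda_1<\lambda_2\leq 0$ and $v_{\lambda_1}$ extends, then $v_{\lambda_2}\leq v_{\lambda_1}$ on the common domain prevents any interior blow-up of $v_{\lambda_2}$. Hence $\mathcal{A}$ is an interval with $\lambda_0\in[\lambda^{**},0]$. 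The plan is to show that $\lambda_0$ itself does the job: (i) $\lambda_0\in\mathcal{A}$, and (ii) $v_{\lambda_0}(r)\to\infty$ as $r\to 0$.

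\emph{Step (i).}
Pick $\lambda_n\downarrow\lambda_0$ in $\mathcal{A}$. By Remark~\ref{comparison}(iv), $(v_{\lambda_n})$ is pointwise non-decreasing on $(0,R]$, and Carath\'eodory continuous dependence on initial data on any compact $[r_1,R]\subset(0,R]$ gives $v_{\lambda_n}\to v_{\lambda_0}$ uniformly on $[r_1,R]$ whenever $r_1$ lies in the maximal interval of existence $(r^*,R]$ of $v_{\lambda_0}$. Suppose $r^*>0$: choosing $r_1>r^*$ close to $r^*$ makes $v_{\lambda_0}(r_1)$ (and hence $v_{\lambda_n}(r_1)$ for large $n$) arbitrarily large. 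Running the blow-up mechanism of Lemma~\ref{blow-up}, after the change of variable $t=t(r)$, on the bounded subinterval corresponding to $[r^*/2,r_1]$ (where the transformed coefficients are under control and $\essinf\omega>0$) then forces each such $v_{\lambda_n}$ to blow up strictly inside $(0,r_1)$, contradicting $\lambda_n\in\mathcal{A}$. Hence $r^*=0$.

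\emph{Step (ii).}
Suppose for contradiction that $v_{\lambda_0}$ is bounded on $(0,R]$ by some $L<\infty$, and pass to the $t$-variable of~\eqref{aaw6}: $v_{\lambda_0}$ is a convex, strictly increasing, bounded solution on $[0,T)$. If $T=\infty$, strict convexity together with $v_{\lambda_0}'(t)>0$ for $t>0$ forces $v_{\lambda_0}$ to grow at least linearly, contradicting boundedness; so $T<\infty$. Integrating $v_{\lambda_0}''=\omega v_{\lambda_0}^p\geq M^p\omega$ twice over $[0,T]$ and using $v_{\lambda_0}(T^-)\leq L$, $\lambda_0\leq 0$ yields the key integrability bound
\[
M^p\int_0^T\omega(u)(T-u)\,du \;\leq\; L-M \;<\;\infty.
\]
For small $\delta>0$ set $w_\delta:=v_{\lambda_0-\delta}-v_{\lambda_0}\geq 0$; then $w_\delta(0)=0$, $w_\delta'(0)=\delta$, and as long as $v_{\lambda_0-\delta}\leq L+1$ the mean value theorem gives the linearised inequality $w_\delta''\leq A\omega w_\delta$ with $A:=p(L+1)^{p-1}$. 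Double integration, Fubini, and the estimate $(t-u)\leq(T-u)$ lead to
\[
w_\delta(t)\;\leq\;\delta T + A\int_0^t\omega(u)(T-u)\,w_\delta(u)\,du,
\]
so Gronwall yields $w_\delta(t)\leq\delta T\exp\bigl(A(L-M)/M^p\bigr)=:C\delta$ uniformly on $[0,T)$. Choosing $\delta<1/C$ closes the bootstrap $v_{\lambda_0-\delta}\leq L+1$, hence $v_{\lambda_0-\delta}$ is bounded and extends to $(0,R)$, contradicting $\lambda_0=\inf\mathcal{A}$.

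The \emph{main obstacle} is Step (ii): since $\omega$ may blow up as $t\to T$ (i.e.\ $r\to 0$), no naive uniform bound on $\omega$ is available, and the Gronwall closure depends on extracting the integrability $\int_0^T\omega(u)(T-u)\,du<\infty$ intrinsically from the fact that $v_{\lambda_0}$ itself is a bounded solution of the ODE.
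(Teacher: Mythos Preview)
Your overall shooting strategy---take the infimum $\lambda_0$ of the set of admissible $\lambda$'s and show that $v_{\lambda_0}$ extends and is unbounded---is exactly the paper's approach, and your Step~(ii) is correct; in fact it is more informative than the paper's proof, which at that point simply invokes ``continuous dependence of $v$ in $\lambda$'' without justification. Your observation that the mere boundedness of $v_{\lambda_0}$ already yields $\int_0^T\omega(u)(T-u)\,du<\infty$, and that this is precisely the weight needed for a Gronwall closure of $w_\delta=v_{\lambda_0-\delta}-v_{\lambda_0}$, is a clean way to make that step rigorous even though $\omega$ may be unbounded near $T$.

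The genuine gap is in Step~(i). What you need there is the statement ``if $v_{\lambda_n}(r_1)$ is sufficiently large and $v_{\lambda_n}'(r_1)\leq 0$, then $v_{\lambda_n}$ blows up before reaching $r^*/2$''. This is \emph{not} what Lemma~\ref{blow-up} says: that lemma asserts only that for any fixed initial value there \emph{exists some} derivative $\lambda$ producing blow-up; it gives no control in terms of the size of the initial value. The claim you want is true and elementary---on $[r^*/2,r_1]$ one has $\omega\geq\omega_0>0$, so in the $t$-variable $v''\geq\omega_0 v^p$ with $v'\geq0$, and the standard energy estimate $(v')^2\geq\frac{2\omega_0}{p+1}(v^{p+1}-V^{p+1})$ shows the blow-up time is $O(V^{(1-p)/2})$ once $v\geq V$---but this is a separate argument, not ``the blow-up mechanism of Lemma~\ref{blow-up}''.

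The paper avoids this issue by using Lemma~\ref{blow-up} in a different way: it applies the lemma on the interval $(0,R')$ (where $R'=r^*$) with initial value $0$ to produce an auxiliary solution $w$ with $w(R')=0$, $w'(R')=\lambda'$ that blows up in $(0,R')$. Since $v'_{\lambda_n}(R')\to v'_{\lambda_0}(R'^+)=-\infty$, one may pick $\lambda_1\in\mathcal{A}$ with $v'_{\lambda_1}(R')<\lambda'$ and $v_{\lambda_1}(R')>0=w(R')$; then the comparison in Remark~\ref{comparison}(iv) forces $v_{\lambda_1}\geq w$ on $(0,R')$, so $v_{\lambda_1}$ blows up too, contradicting $\lambda_1\in\mathcal{A}$. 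This route uses Lemma~\ref{blow-up} exactly as stated and requires no additional quantitative blow-up estimate.
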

\begin{proof}
The assertion holds trivially if $v_0(r)\to\infty$ as $r\to0$. Otherwise, for all $\lambda<0$ let $v_\lambda$ denote a (locally unique) solution to~\eqref{aaw4} with $v_\lambda(R)=M$ and  $v'_\lambda(R)=\lambda$. Let
\[
\Lambda = \Big\{\lambda<0: v_\lambda
\mbox{ continues to }r=0\mbox{ as a bounded solution to \eq{aaw4}}
\Big\}.
\]
Due to Lemma~\ref{blow-up}, $\Lambda$ is a bounded interval. Let $\lambda_0=\inf \Lambda>-\infty$.

First we show that $v_{\lambda_0}$ can be extended to the interval $(0,R)$ as a solution to~\eqref{aaw4}. Indeed, assume the contrary. Then there exists $R'\in (0,R)$ such that $v_{\lambda_0}(r)\to+\infty$ and
$v'_{\lambda_0}(r)\to-\infty$ as $r\to R'$. Consider the final value problem:
\[
\left\{
\begin{aligned}
&w''+\frac{\phi(r)}{r}w' =\theta(r)|w|^{p-1}w \mbox{ on } (0, R')\,,\\
&w(R')=0 \,,\,w'(R')=\lambda'\,,
\end{aligned}
\right.
\]
By Lemma~\ref{blow-up}, there exists $\lambda'<0$ such that $w$ cannot be extended to the interval $(0,R')$, that is, it blows up on the interval $(0,R')$. On the other hand, there exists $\lambda_1\in \Lambda$ such that $v_{\lambda_1}'(R') < \lambda'$ since otherwise $v'_{\lambda_0}(R')\geq \lambda'$. Hence,
by Remark~\ref{comparison}(iv), $v_{\lambda_1}(r)>w(r)$ on $(0,R')$ and so $w$ cannot blow up.
This contradiction proves that $v_{\lambda_0}$ can be extended to interval $(0,R)$ as a solution to~\eqref{aaw4}.

Finally, if $v_{\lambda_0}$ is bounded on $(0,R)$, then the continuous dependence of $v$
in $\lambda$ implies the existence of $\lambda<\lambda_0$ such that $v_\lambda$ can be extended to the interval $(0,R)$ as a bounded solution to~\eqref{aaw4}. Then $\lambda\in \Lambda$ which contradicts the definition of $\lambda_0$. Thus, $v_{\lambda_0}(r)\to\infty$ as $r\to0$.
\end{proof}

\begin{proof}[{\bf Proof of Theorem~\ref{emden-fowler-thrm} completed}] (i)
The assertion follows from Lemma~\ref{exist_vel0} and Corollary~\ref{exist_gen}.
(ii)
Due to Remark \ref{comparison}(iv), it suffices to consider the case $\theta\leq1$.

Assume that, contrary to the assertion, $v$ can be extended to the interval $(0,R)$ as a solution of~\eqref{aaw4}. It follows from~\eqref{rad1} that $\widehat v(x):=v(|x|)$ satisfies the equation
\[
\sum_{i,j=1}^N\widehat a_{ij}(x)\frac{\partial^2 \tilde v}{\partial x_i\partial x_j}
+ \sum_{k=1}^N\widehat b_{k}(x)\frac{\partial \tilde v}{\partial x_k}
=\tilde v^p\quad\mbox{ in }B_R\setminus\{0\}\,,
\]
where
\[
\begin{aligned}
\widehat a_{ij}(x) & =\tfrac1{\theta(|x|)}
\left\{\delta_{ij}+\frac{\phi_+(|x|)}{N-1}\Big[\delta_{ij}-\frac{x_ix_j}{|x|^2}  \Big]  \right\},&& i,j &=1,2,\ldots, N,
\\
\widehat b_k(x) & =-\tfrac1{\theta(|x|)}(\phi_-(|x|) + N - 1)\frac{x_k}{|x|^2},&&k&=1,2,\ldots, N.
\end{aligned}
\]
Note that $\mathbf{\widehat a}:=\{\widehat a_{ij}\}_{i,j=1}^N$ and $\mathbf{\widehat b}:=\{\widehat b\}_{k=1}^N$
satisfy~\eqref{elliptic} and~\eqref{bvec}, respectively, since
$\theta^{-1}\in L^\infty(0,R)$. By Proposition~\ref{KO} it follows that there exists $c>0$ such that
\[
\widehat v(x)\leq c |x|^{-\frac2{p-1}}\quad\mbox{ for all }x\in B_R\setminus\{0\}\,.
\]
So $\widehat v$ is a positive solution to the equation
\begin{equation}\label{fuchsian}
\Big(\sum_{i,j=1}^N\widehat a_{ij}(x)\frac{\partial^2 }{\partial x_i\partial x_j}
+ \sum_{k=1}^N\widehat b_{k}(x)\frac{\partial }{\partial x_k} - Q(x)\Big)w =0 \quad \mbox{ in }
B_R\setminus\{0\},
\end{equation}
with $0<Q(x) := \widehat v^{p-1}(x)<c|x|^{-2}$. Therefore the operator in equation~\eqref{fuchsian}
is of Fuchsian type. Hence by the scaling argument (see~\cite{pinchover94}), $v$ satisfies the Harnack inequality:
\begin{equation}\label{harnack}
\mbox{there exists }  C>0 \mbox{ such that }\frac{v(r/2)}{v(r)}< C  \mbox{ for all }r\in(0,R).
\end{equation}
It follows from~\eqref{aaw5} that
\[
-v'(r) = \frac1{\Gamma(r)}\int_r^{R}\theta(\rho)\Gamma(\rho) v^{p}(\rho)d\rho.
\]
Since $-v'(r)\Gamma(r)$ is a decreasing function, we have
\[
v(r/2)-v(r) = -\int_{r/2}^rv'(\rho)d\rho = -\int_{r/2}^r\frac1{\Gamma(\rho)}v'(\rho)\Gamma(\rho)d\rho
\geq -v'(r)\Gamma(r)\int_{r/2}^r\frac{d\rho}{\Gamma(\rho)}.
\]
Furthermore, for all $\rho\in (r/2,r)$ we have
\[
\frac{\Gamma(r)}{\Gamma(\rho)}=\expn{\int_\rho^r\phi(\tau)\frac{d\tau}\tau}
\geq \expn{-\|\phi\|_\infty\int_{r/2}^r\frac{d\tau}\tau}=2^{-\|\phi\|_\infty}. 
\]
Hence $v(r/2)-v(r)\geq -2^{-\|\phi\|_\infty-1}rv'(r)$ and by~\eqref{harnack},
there exists $c>0$ such that
$
-v'(r)<\frac1{cr}v(r).
$
So we have
\[
v(r) \geq \frac{cr}{\Gamma(r)}\int_r^{R}\theta(\rho)\Gamma(\rho) v^{p}(\rho)d\rho
\quad \mbox{ for all } r\in(0,R).
\]
Let now
\[
V(r):=\int_r^{R}\theta(\rho)\Gamma(\rho) v^{p}(\rho)d\rho \quad  \mbox{ for all }
r\in(0,R).
\]
Then there exists $C>0$ such that
\[
-V'(r)\geq C\theta(r)r^{p}\Gamma^{1-p}(r)V^{p}(r)
\quad  \mbox{ for all }  r\in(0,R).
\]
So
\[
\Big(\frac d{dr}V^{1-p}\Big)(r) \geq  (p-1)C\theta(r)r^{p}\Gamma^{1-p}(r)
\quad  \mbox{ for all } r\in(0,R).
\]
Integrating the above inequality over $(0,\frac12R)$
we obtain
\[
V^{1-p}(\tfrac12R)\geq (p-1)C\int_0^{\frac12R}\theta(r)r^{p}\Gamma^{1-p}(r)dr=+\infty,
\]
due to~\eqref{superoptimal}. The contradiction proves that $v$ cannot be extended to
the interval $(0,R)$ as a solution to~\eqref{aaw4}.
\end{proof}

\section{Proof of the main results}

\subsection{ Proof of Theorem~\ref{main_technical}} (i) It follows
from Theorem~\ref{emden-fowler-thrm} that equation~\eqref{rad2}
has a singular decreasing solution for $p>1$ provided
\eqref{superpessimal} with $\phi(r)=\Env\Psi(r)-1$ and
$\theta(r)=\Env\Theta(r)$ holds. Since
$\Gamma(r)=\frac{R}{r\Eenv(r)},$ we conclude
that~\eqref{superpessimal} is the same as \eqref{exist_criterion}.

(ii) Given $R,M>0$, we show that there exists $\lambda>0$ such that a solution to
  the final value problem for equation \eqref{rad3} with $v(R)=M$ and $v'(R)=\lambda$ does not
  continue to $r=0$, provided~\eqref{non-exist_criterion} holds. Then the assertion will follow from
  Proposition~\ref{nonex-0}.

  First observe that the corresponding equation to~\eqref{rad3} is equivalent to~\eqref{aaw4} with $\phi(r)=\Env\Psi(r)-1$ and $\theta(r)=\env\Theta(r)$ on interval $\{r:\:v'(r)\geq0\}$. Hence it follows from Remark~\ref{comparison}(iii) that there exist $\lambda>0$ and $R'\in(0,R)$ such that $v(r)>0$ for
  $r\in[R',R]$, $v'(r)>0$ for $r\in(R',R)$ and $v'(R')=0$. Then it follows from Remark~\ref{comparison}(ii) that $v$ satisfies~\eqref{aaw4} with $\phi(r)=\env\Psi(r)-1$ and $\theta(r)=\env\Theta(r)$ on interval $(0,R')$. Now Theorem~\ref{emden-fowler-thrm} asserts that $v$ does not continue to $r=0$ provided~\eqref{superoptimal} holds. Finally, observe that~\eqref{superoptimal} is
  the same as~\eqref{non-exist_criterion} since $\Gamma(r)=\frac{R}{r\eenv(r)}.$\qed


\subsection{Proof of Theorem \ref{generalpositive} and Theorem~\ref{general}}
This follows from the next three lemmas. In the first one we evaluate the lower critical exponent
$p_*$ defined in \eqref{crit-exp}.

\begin{lemma}\label{lower-critical}
Assume there exist $c>0$ and $\sigma\geq0$ such that $\Env\Theta(r)\leq cr^{-\sigma}$ for $r\in(0,R)$. Then $(1)_p$ has a solution for all $p<1$, i.e. $p_*=-\infty$.
\end{lemma}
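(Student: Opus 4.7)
The strategy I would follow is to construct an explicit radial singular solution of the form $u(x) = |x|^{-\alpha}$ for a sufficiently large $\alpha > 0$. First I would note that the ellipticity bound \eqref{elliptic} together with \eqref{bvec} force $\Psi$ to be essentially bounded on $B_R\setminus\{0\}$: the numerator $\mathop{Tr}\mathbf{a}(x) + \mathbf{b}(x)\cdot x$ is bounded in absolute value by $(\nu + c)N$ and the denominator is bounded below by $\nu^{-1}$, so there exists $C_0 > 0$ with $\Env\Psi(r) \leq C_0$ for a.a. $r \in (0,R)$.

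Next, in view of Remark~\ref{Psi}, it is enough to produce a decreasing function $v\colon (0,R') \to (0,\infty)$ with $v(r) \to \infty$ as $r \to 0$ satisfying \eqref{rad2} on $(0,R')$ for some $R' \in (0,R]$; then $u(x) = v(|x|)$ is the desired radial singular solution of $(1)_p$ on $B_{R'}\setminus\{0\}$. For the trial function $v(r) = r^{-\alpha}$ one has $v' < 0$, so $(v')_+ = 0$ and $(v')_- = \alpha r^{-\alpha-1}$. A short computation then reduces \eqref{rad2}, combined with the hypothesis $\Env\Theta(r) \leq cr^{-\sigma}$, to the pointwise inequality
\[
\alpha\bigl(\alpha + 2 - \Env\Psi(r)\bigr) \geq c\, r^{\alpha(1-p) + 2 - \sigma}.
\]

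Since $p < 1$, I can make the exponent $\alpha(1-p) + 2 - \sigma$ strictly positive by choosing $\alpha > (\sigma - 2)_+/(1-p)$, which forces the right-hand side to vanish as $r \to 0$. Simultaneously enlarging $\alpha$ past $C_0$ ensures $\alpha + 2 - \Env\Psi(r) \geq 2$ uniformly in $r$. For such an $\alpha$, the inequality above holds on a sufficiently small interval $(0,R')$, completing the construction. I do not anticipate any real obstacle: the whole argument is driven by the positivity of $1 - p$, which provides enough room in the exponent of $r$ to absorb any power-type singularity of $\Env\Theta$ irrespective of the value of $\sigma \geq 0$.
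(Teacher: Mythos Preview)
Your proposal is correct and takes essentially the same approach as the paper: both construct a radial power function $u(x)=m|x|^{-\alpha}$ with $\alpha$ large enough that $\alpha>\esssup\Env\Psi-2$ and $\alpha(1-p)+2-\sigma\geq 0$, then verify~\eqref{rad2}. The only cosmetic difference is that the paper keeps the full interval $(0,R)$ and absorbs the constant $c$ by taking $m^{p-1}$ small, whereas you fix $m=1$ and shrink the radius to some $R'\in(0,R]$; both devices exploit $p<1$ in the same way.
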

\begin{proof}
We look for a solution $u$ to $(1)_p$ for $p<1$ in the form $u(x)=m|x|^{-\alpha}$ for some $m,\alpha>0$.
By~\eqref{rad2}, it suffices for $m$ and $\alpha$ to satisfy
\begin{equation*}
m\alpha r^{-\alpha-2}\big(2+\alpha-\Env\Psi(r)\big)\geq c m^p r^{-\sigma-\alpha p}\quad
\mbox{ for all } r\in(0,R).
\end{equation*}
This is the case when $m^{p-1}$ is small enough and
\[\alpha+2\geq \alpha p + \sigma\quad \mbox{and}\quad \alpha>\esssup\limits_{(0,R)} \Env\Psi(r) - 2.\]
Since $p<1$, the latter holds for a sufficiently large $\alpha$.
\end{proof}

The lower bound of $p^*$ in~\eqref{ppqqw} follows from the next lemma.

\begin{lemma}\label{exist} Assume $\lim\limits_{\rho\to
0}\essinf\limits_{r<\rho}\Env\Psi(r) >2$ and there exists $0\leq \sigma<2$ such that  
\neweq{k1}
\limsup_{|x|\rightarrow 0} |x|^{\sigma+\varepsilon} K(x)<\infty,
\endeq
for all $\varepsilon>0$.
Then $(1)_p$ has a singular solution for all
$1<p<1+\frac{2-\sigma}{(\overline\Psi-2)_+}$, i.e. $p^*\geq
1+\frac{2-\sigma}{(\overline\Psi-2)_+}$.
\end{lemma}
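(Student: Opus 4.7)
The plan is to invoke Corollary~\ref{coro1}, which, under the running assumption $\lim_{\rho\to0}\essinf_{r<\rho}\Env\Psi(r)>2$, reduces the existence of a singular solution to producing some $g\in GL_N$ for which
\[
\int_0^R \Eenv_g^{p-1}(r)\,\Env\Theta_g(r)\,r^{2p-1}\,dr<\infty. \tag{$\star$}
\]
So the task is purely to estimate $\Eenv_g$ and $\Env\Theta_g$ as powers of $r$ and check integrability near $0$.

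Fix $p$ with $1<p<1+\frac{2-\sigma}{(\overline{\Psi}-2)_+}$. A short algebraic rearrangement (splitting into the cases $\overline{\Psi}>2$, $=2$, $<2$; note that in the last two cases the upper bound for $p$ is $+\infty$ and $\sigma<2$ saves us) shows this range is exactly
\[
2p-(p-1)\overline{\Psi}-\sigma>0,
\]
so I can pick $\eps>0$ small enough that
\[
2p-(p-1)(\overline{\Psi}+2\eps)-\sigma-\eps>0. \tag{$\star\star$}
\]

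Next I would build the power bounds. Since $\overline{\Psi}=\inf_{g\in GL_N}\mathcal{N}(g)$, choose $g\in GL_N$ with $\mathcal{N}(g)<\overline{\Psi}+\eps$. By the $\limsup$ definition of $\mathcal{N}(g)$ there is $r_0\in(0,R)$ with
\[
\int_r^R\Env\Psi_g(\tau)\tfrac{d\tau}{\tau}\le (\overline{\Psi}+2\eps)|\ln r|\quad\text{for all }r\in(0,r_0),
\]
which translates via~\eqref{MandN} into $\Eenv_g(r)\le C_1\,r^{-(\overline{\Psi}+2\eps)}$ on $(0,r_0)$. On the other hand, uniform ellipticity of $\mathbf{a}_g$ gives $({\mathbf a}_g(x)x,x)/|x|^2\ge c(g)>0$, so $\Env\Theta_g(r)\le C_2\,\Env K(r)$, and hypothesis~\eqref{k1} yields $\Env K(r)\le C_3\,r^{-\sigma-\eps}$ on $(0,r_0)$ (shrinking $r_0$ if necessary).

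Plugging these into the integrand of $(\star)$ gives, on $(0,r_0)$, the bound $C\,r^{\alpha}$ with
\[
\alpha=-(\overline{\Psi}+2\eps)(p-1)-(\sigma+\eps)+2p-1=-1+\bigl[2p-(p-1)(\overline{\Psi}+2\eps)-\sigma-\eps\bigr];
\]
by $(\star\star)$ the bracket is strictly positive, so $\alpha>-1$ and the integrand is integrable near $0$. Away from $0$ the integrand is locally bounded (here we use $K\in L^\infty_{\mathrm{loc}}(B_R\setminus\{0\})$, $\Env\Psi_g\in L^\infty_{\mathrm{loc}}(0,R]$), so $(\star)$ holds and Corollary~\ref{coro1} produces the desired singular solution; hence $p^*\ge p$, and sending $p\uparrow 1+\tfrac{2-\sigma}{(\overline{\Psi}-2)_+}$ yields the claimed inequality.

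I expect no deep obstacle; the delicate points are (a) verifying the case analysis that turns the range on $p$ into the single inequality $2p-(p-1)\overline{\Psi}-\sigma>0$ (in particular handling $(\overline{\Psi}-2)_+=0$ so that the upper bound becomes $+\infty$ and one must exploit $\sigma<2$), and (b) extracting a pointwise polynomial upper bound on $\Eenv_g$ from a merely averaged $\limsup$ control on $\Env\Psi_g$, which requires the loss $+\eps$ absorbed in the choice of $\eps$.
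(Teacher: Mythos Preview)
Your proposal is correct and follows essentially the same route as the paper: reduce to Corollary~\ref{coro1}, bound $\Eenv_g$ and $\Env\Theta_g$ by powers of $r$ using the definitions of $\mathcal{N}(g)$ and hypothesis~\eqref{k1}, and verify integrability near $0$. Your $\eps$-bookkeeping is in fact a bit more careful than the paper's (which writes $\Eenv_g(r)\le cr^{-\mathcal{N}(g)}$ without the extra slack), and your explicit reformulation of the range on $p$ as $2p-(p-1)\overline\Psi-\sigma>0$ makes the case $(\overline\Psi-2)_+=0$ cleaner.
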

\begin{proof} We verify \eqref{non-exist_criterion00}. If $g\in GL_N$, then \eq{k1} implies  
$\limsup_{r\rightarrow 0} r^{\sigma+\varepsilon}\Env \Theta_g (r)<\infty$, for all $\varepsilon>0$.

Since $\Eenv_g(r)\leq c r^{-\mathcal{N}(g)}$ for some $c>0$, for all $0<\varepsilon<2-\sigma$ and all $1<p<1+\frac{2-\sigma-\varepsilon}{(\mathcal{N}(g)-2)_+}$
we have
\[
\int_0^R  \Eenv_g^{p-1}(r)\Env\Theta_g(r) r^{2p-1}dr \leq C \int_0^R
r^{2-\sigma - \varepsilon - (p-1)(\mathcal{N}(g)-2)_+}\frac{dr}r<\infty.
\]
By Corollary \ref{coro1} we deduce that $(1)_p$ has a singular solution. We conclude by letting $\varepsilon\rightarrow 0$.
\end{proof}

The upper bound of $p^*$ in~\eqref{ppqqw} follows from the next lemma.

\begin{lemma} Assume there exists $\sigma\geq0$ such that, 
\neweq{k2}
\liminf_{|x|\rightarrow 0} |x|^{\sigma-\varepsilon} K(x)>0,
\endeq
for all $\varepsilon>0$.
Then
\[
p^*\leq 1 + \frac{2-\sigma}{(\underline \Psi-2)_+},\quad\mbox{for }\sigma<2,
\]
and
$p^*=1$ for $\sigma>2$ and for $\sigma=2$, $\underline \Psi>2$.
\end{lemma}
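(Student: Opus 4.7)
The plan is to derive everything from the non-existence criterion \eqref{non-exist_criterion} of Theorem~\ref{main_technical}(ii); for each $p>1$ we seek a $g\in GL_N$ making that integral divergent. Two elementary lower bounds do the job. First, ellipticity of $\mathbf{a}_g$ forces $({\bf a}_g x,x)/|x|^2$ to lie between positive constants, so the hypothesis $\liminf_{|x|\to 0}|x|^{\sigma-\varepsilon}K(x)>0$ gives $\env\Theta_g(r)\geq c_\varepsilon r^{-(\sigma-\varepsilon)}$ for small $r$, for every $\varepsilon>0$. Second, the liminf definition of $n(g)$ yields $\int_r^R\env\Psi_g(\tau)\,d\tau/\tau\geq(n(g)-\varepsilon)|\ln r|$ for small $r$, whence $\eenv_g(r)\geq r^{-(n(g)-\varepsilon)}$. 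Substituting both into~\eqref{non-exist_criterion} and requiring the resulting exponent of $r$ to be $\leq -1$ reduces everything to the single scalar inequality
\[
(p-1)\bigl(n(g)-2\bigr)\ \geq\ 2-\sigma+\eta,
\]
where $\eta>0$ collects all $\varepsilon$-corrections and may be taken arbitrarily small.

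With this inequality in hand, the cases $\sigma<2$ and $\sigma=2$ are routine. If $\sigma<2$ and $\underline\Psi>2$, pick $g$ with $n(g)$ arbitrarily close to $\underline\Psi$; the inequality rearranges to $p\geq 1+(2-\sigma+\eta)/(n(g)-2)$, and letting $n(g)\to\underline\Psi$, $\eta\to 0$ yields $p^*\leq 1+(2-\sigma)/(\underline\Psi-2)$. The subcase $\underline\Psi\leq 2$ is vacuous since the claimed bound is $+\infty$. If $\sigma=2$ and $\underline\Psi>2$, fix any $p>1$ and choose $g$ with $n(g)>2$; then the left side is a fixed positive quantity while the right side is just $\eta$, as small as desired, so the inequality holds and we obtain $p^*=1$.

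The main obstacle is the case $\sigma>2$, where no assumption on $\underline\Psi$ is available, and the direct approach would only yield a finite bound $p^*\leq 1+(\sigma-2)/(2-\underline\Psi)_+$. The decisive observation is that $2-\sigma<0$: choosing any fixed $g$ (say $g=\mathrm{I}$) and any $\eta\in(0,\sigma-2)$, the right-hand side becomes a fixed negative number while the left-hand side $(p-1)(n(g)-2)$ tends to $0$ as $p\to 1^+$. The inequality therefore holds on an interval $(1,p_1]$ with $p_1>1$ (explicitly, $p_1=1+(\sigma-2-\eta)/(2-n(g))_+$, read as $+\infty$ if $n(g)\geq 2$), so the non-existence criterion rules out singular solutions for $p$ arbitrarily close to $1$. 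By the definition $p^*=\inf\{p>1:\,(1)_p\text{ has no singular solution}\}$, this immediately gives $p^*=1$.
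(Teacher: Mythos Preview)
Your proof is correct and follows essentially the same route as the paper's: both verify the non-existence criterion~\eqref{non-exist_criterion} by inserting the power-type lower bounds $\eenv_g(r)\gtrsim r^{-(n(g)-\varepsilon)}$ and $\env\Theta_g(r)\gtrsim r^{-(\sigma-\varepsilon)}$, reduce to the scalar inequality $(p-1)(n(g)-2)>2-\sigma$ (up to $\varepsilon$-corrections), and then split into the cases $\sigma<2$, $\sigma=2$, $\sigma>2$. Your treatment of the case $\sigma>2$ is spelled out in more detail than the paper's one-line remark, but the idea is identical.
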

\begin{proof} We verify \eqref{non-exist_criterion}. To this
aim, let $g\in GL_N$ and note that $\eenv_g(r)\geq c r^{-n(g)}$ for some $c>0$.
Thus, using \eq{k2},  for every $\varepsilon>0$, there exist $C_\varepsilon>0$ such that
\[
\int_0^R\eenv_g^{p-1}(r)\env\Theta_g(r)r^{2p-1}dr \geq
C_\varepsilon\int_0^R r^{2p-\sigma+\varepsilon-1-(p-1)n(g)} dr=\infty,
\]
provided $2-\sigma + \varepsilon-(p-1)(n(g) -2 -\varepsilon)<0$. This choice of
an appropriate $\varepsilon>0$ is possible if
\begin{equation}\label{nonex-exponents}
2-\sigma -(p-1)(n(g) - 2)<0.
\end{equation}
For $\sigma<2$ and $n(g)>2$, \eqref{nonex-exponents} is equivalent to
\[
p>1+\frac{2-\sigma}{n(g) - 2}.
\]
For $\sigma=2$ and $n(g)>2$, ~\eqref{nonex-exponents} holds for all $p>1$.
Finally, for $\sigma>2$ and $n(g)\in\mathbb{R}$, there exists
$p\in(1,1+\delta)$ such that~\eqref{nonex-exponents} holds. Hence $p^*=1$ by
~\eqref{crit-exp}.
\end{proof}

\subsection{Proof of Theorem \ref{critic}}

(i) Note that~\eqref{non-exist_criterion} with $p=1+\frac{2-\sigma}{A-2}$ holds in virtue of~\eqref{optimal}.

(ii) We easily verify that \eq{pessimal} follows from
\eqref{non-exist_criterion00} with $p=1+\frac{2-\sigma}{A-2}$.

(iii) Since $h\in L^\infty(0,R)$, it follows from~\eqref{22} that
\[
\int_0^R h^\epsilon(r)\frac{dr}r=\infty
\]
for all $\epsilon\in(0,\varepsilon).$ Hence~\eqref{non-exist_criterion} implies that
$(1)_p$ has no singular solutions for all $p\in(1,1+\varepsilon]$. Then $p^*=1$
by definition~\eqref{crit-exp}.\qed

\subsection{Proof of Theorem~\ref{unstable}}
Let $\alpha\geq 2$ be such that $q=1+\frac{2}{(\alpha-2)_+}$; in particular $q=\infty$ for $\alpha=2$.  Consider the operator ${\mathscr L}$ in the form \eq{operator} where ${\bf b}\equiv 0$ and ${\bf a}$ is defined by \eq{matgs} with
\[
\gamma(r)=-1+\frac{N-1}{\phi(\ln(1/r))}\,,\quad r\in(0,1),
\]
and $\phi:(0,\infty)\to \mathbb{R}$ is a bounded measurable function
such that $\phi$ is 1-periodic and
\begin{equation}\label{periodic}
\int_n^{n+1}\phi(t)dt=\alpha-1
\quad\mbox{ for all integers } n\geq 0. 
\end{equation}
By Proposition~\ref{computations},
\[
\Psi(x)
= 1 + \frac{N-1}{1+\g(|x|)}
=1+\phi\Big(\ln\frac{1}{|x|}\Big)
\quad\mbox{ for all }x\in B_1\setminus\{0\}.
\]
Also
\[
\int_r^{1}\frac{\Psi(\tau)}{\tau} d\tau=\int_r^{1}\frac{1+\phi(\ln(1/\tau))}{\tau} d\tau=\int_0^{\ln(1/r)}\phi(t)dt + \ln\frac1r.
\]
Note that, by \eqref{periodic},
\[
\int_0^{\ln(1/r)}\phi(t)dt - (\alpha-1)\ln\frac{1}{r} =
\int_{[\ln(1/r)]}^{\ln(1/r)}\big(\phi(t) - \alpha +1\big)dt.
\]
Hence
\[
\alpha\ln\frac{1}{r} - \|\phi-\alpha+1\|_{L^\infty(0,1)}\leq \int_r^1 \frac{\Psi(\tau)}{\tau} d\tau\leq
\alpha\ln\frac{1}{r} + \|\phi-\alpha+1\|_{L^\infty(0,1)}.
\]
Thus, if $\mathcal{M}(r)$ and $m(r)$ are defined by \eq{MandN} we have $\mathcal{M}(r)\sim m(r)\sim r^{-\alpha}$.  Since $K\in L^\infty(B_R)$ and $\essinf K>0$, we also have $\Env \Theta(r)\sim \env\Theta(r)\sim 1$. Now from \eq{non-exist_criterion} and \eq{non-exist_criterion00} 
it follows that $(1)_p$ has a singular solution if
and only if $p<1+\frac2{(\alpha-2)_+}$, that is, $p^*=q$.\qed

\section{Examples}\label{examples}

This part presents some applications to our main results in Section 1. For the sake of clarity we shall assume in the following that $K(x)=|x|^{-\sigma}$, $\sigma\geq 0$.
\subsection{Stabilizing coefficients}

\begin{example}\label{exa1}
Consider the inequality
\neweq{inq1}
\sum_{i=1}^N (1+x_i^2)^k\,\frac{\partial ^2 u}{\partial x^2_i}\geq |x|^{-\sigma} u^p\quad\mbox{ in }B_R\setminus\{0\},
\endeq
\end{example}
where $k\in \R$ and $\sigma\geq 0$.
\begin{proposition} Assume $N\geq 3$. Then, inequality \eq{inq1} has singular solutions if and only if $p<1+(2-\sigma)_+/(N-2)$.
\end{proposition}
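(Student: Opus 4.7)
My plan is to apply Theorem \ref{general} to pin down $p^*$ and then Theorem \ref{critic}(i) to exclude the critical exponent, which together produce the claimed dichotomy. For the operator in \eqref{inq1} one has $\mathbf{a}(x)=\mathrm{diag}\big((1+x_i^2)^k\big)_{i=1}^N$ and $\mathbf{b}\equiv0$, so a direct computation from \eqref{ellip-f} gives
\[
\Psi(x)=\frac{|x|^{2}\sum_{i=1}^{N}(1+x_i^2)^k}{\sum_{i=1}^{N}(1+x_i^2)^k\,x_i^2}.
\]
A Taylor expansion $(1+x_i^2)^k=1+kx_i^2+O(x_i^4)$ gives the numerator $|x|^2\big(N+k|x|^2+O(|x|^4)\big)$ and the denominator $|x|^2+O(|x|^4)$, so
\[
\Psi(x)=N+O(|x|^{2}) \quad\mbox{as } x\to 0.
\]

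Consequently $\Env\Psi(r),\env\Psi(r)\to N$ as $r\to 0$, and a standard Cesaro-type calculation gives $\mathcal{N}(\mathrm{I})=n(\mathrm{I})=N$ in \eqref{dimensions}. Hence $\overline{\Psi}\leq N$ and $\underline{\Psi}\geq N$. Since $K(x)=|x|^{-\sigma}$ obviously satisfies \eqref{essentially}, Theorem \ref{general} applies. For $0\leq\sigma<2$, the double inequality \eqref{ppqqw} --- valid because $\lim_{\rho\to 0}\essinf_{r<\rho}\Env\Psi(r)=N>2$ --- combined with $\overline{\Psi}\leq N\leq \underline{\Psi}$ forces all intermediate inequalities to be equalities, hence $p^*=1+\frac{2-\sigma}{N-2}$. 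For $\sigma\geq 2$, the second alternative of \eqref{ppqqw} (applicable since $\underline{\Psi}\geq N>2$) yields $p^*=1$. Together with $p_*=-\infty$ from the same theorem, this establishes existence for $p\in(p_*,p^*)$ and non-existence for $p>p^*$, matching the claim except at the endpoint $p=p^*$ in the case $\sigma<2$.

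To close the endpoint $p=p^*$ for $\sigma<2$, I apply Theorem \ref{critic}(i) with $A=N>2$. The sharp rate $|\Psi(x)-N|=O(|x|^2)$ forces $|\Env\Psi(r)-N|+|\env\Psi(r)-N|=O(r^2)$, so the integrals $\int_r^R\big(\Env\Psi(\tau)-N\big)\tfrac{d\tau}{\tau}$ and $\int_r^R\big(\env\Psi(\tau)-N\big)\tfrac{d\tau}{\tau}$ stay bounded uniformly in $r\in(0,R)$. Using \eqref{MandN} this yields constants $0<c_1\leq c_2<\infty$ with
\[
c_1\,r^{-N}\leq \eenv(r)\leq \Eenv(r)\leq c_2\,r^{-N}\quad\mbox{on }(0,R),
\]
so \eqref{ggw} holds with $h\equiv c_1$. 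The divergence criterion \eqref{optimal} then reduces to $c_1^{(2-\sigma)/(N-2)}\int_0^R\tfrac{dr}{r}=\infty$, which is automatic, and Theorem \ref{critic}(i) rules out singular solutions at $p=p^*$.

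The main obstacle is the critical case: the bare convergence $\Psi\to N$ is not enough; one needs the quadratic rate $\Psi(x)-N=O(|x|^2)$ to trap $\Eenv$ and $\eenv$ between constant multiples of $r^{-N}$, so that a constant choice of $h$ verifies \eqref{optimal}. This rate is furnished automatically by the smoothness of the coefficients $(1+x_i^2)^k$ at the origin, and the rest of the argument is a bookkeeping exercise in applying Theorems \ref{general} and \ref{critic}.
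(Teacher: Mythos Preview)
Your proof is correct and follows the same overall route as the paper: compute $\Psi$, show it stabilizes to $N$ rapidly, apply Theorem~\ref{general} to identify $p^*$, then Theorem~\ref{critic}(i) with a constant $h$ to exclude the critical exponent. The only technical difference is that the paper handles the lower bound $\Psi(x)\geq N-C|x|^2$ by a case split (Chebyshev's sum inequality for $k\leq 0$, a direct estimate for $k>0$) and invokes the argument of Lemma~\ref{computations} for general $g\in GL_N$ to get $\underline\Psi=\overline\Psi=N$, whereas your Taylor expansion $(1+x_i^2)^k=1+kx_i^2+O(x_i^4)$ gives the two-sided bound $\Psi(x)=N+O(|x|^2)$ in one stroke and you only need $g=\mathrm{I}$ together with the squeeze from~\eqref{ppqqw}; this is slightly more economical.
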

\begin{proof}
Let ${\bf a}_{ij}(x)=(1+x_i^2)^k\delta_{ij}$, $i,j=1,2,\dots,N$. Then
\[
\Psi_g(x)=\frac{|x|^2}{|(g{\bf a}^{1/2})x|^2} {\rm Tr}\big[(g{\bf a}^{1/2})(g{\bf a}^{1/2})^\top \big]
\]
and with the same arguments as in the proof of Lemma \ref{computations} we find $\underline\Psi=\overline \Psi=N$. Thus, by Theorem \ref{general} it follows that $p^*=1+(2-\sigma)_+/(N-2)$.
We next study the existence of a singular solution in the critical case $p=1+(2-\sigma)_+/(N-2)$.  To this aim, let us remark that
\[
\Psi(x)=\frac{|x|^2\sum\limits_{i=1}^N  (1+x_i^2)^k }{\sum\limits_{i=1}^N x_i^2(1+x_i^2)^k} \quad\mbox{ for all }x\in B_R\setminus\{0\}.
\]
If $k\leq 0$ we use Chebyshev's inequality (see, e.g., \cite[Theorem 43, page 43]{hardy}) to deduce $\Psi(x)\geq N$. 
If $k> 0$ then for all $x\in B_R\setminus\{0\}$ we have
\[
\Psi(x)\geq \frac{N|x|^2}{\sum\limits_{i=1}^N x_i^2(1+x_i^2)^k}\geq \frac{N}{(1+|x|^2)^k}\geq N+N[1-(1+|x|^2)^k].
\]
Also there exists $C=C(N,k,R)>0$ such that
\[
N[1-(1+|x|^2)^k]\geq -C|x|^2\quad\mbox{ for all }x\in B_R\setminus\{0\}.
\]
We obtained that in both cases $k\leq 0$ and $k>0$ there exists a positive constant $C>0$ such that
\[
\Psi(x)\geq N-C|x|^2\quad\mbox{ for all }x\in B_R\setminus\{0\}.
\]
Then 
\[
\displaystyle m(r)=\exp\left\{\displaystyle \int_r^R\frac{\Psi(s)}{s}ds\right\}\ge C(R,N,k) r^{-N}\quad\mbox{ for all }r\in (0,R).
\] 
By Theorem \ref{critic}(i) (take $h\equiv C(N,R,k)$) inequality \eq{inq1} has no solutions in the critical case $p=1+(2-\sigma)_+/(N-2)$.
\end{proof}

\subsection{Gilbarg-Serrin matrices}

We focus next on matrices ${\bf a}$ defined by~\eqref{matgs} in Lemma~\ref{computations}. They are related to Gilbarg-Serrin matrices suggested in \cite{GS,KLS1,MeSer} and provide a rich source of interesting examples as we illustrate in the following.

\begin{example}\label{serrin1}
Consider the inequality
\neweq{ser1}
\Delta u+\gamma(|x|)\sum_{i,j=1}^N \frac{x_ix_j}{|x|^2}\frac{\partial ^2 u}{\partial x_i\partial x_j}\geq |x|^{-\sigma}u^p\quad\mbox{ in }B_R\setminus\{0\}\subset \R^N,
\endeq
where $N\geq 3$ and $\sigma\geq 0$. Assume that $\gamma:(0,R)\rightarrow \R$ is bounded and continuous and satisfies $\limsup_{r\rightarrow 0}\gamma>-1$. This last condition on $\gamma$ ensures the uniform ellipticity of the matrix ${\bf a}$ as required in \eq{elliptic}. 
\end{example}

From Theorem \ref{general} we obtain:
\begin{proposition}\label{exs1} Assume $\limsup_{r\rightarrow 0}\gamma(r)<N-2$. Then, there exists $p^*\geq 1$ such that \eq{ser1} has singular solutions for $p<p^*$ and no singular solutions exist if $p>p^*$. Furthermore, $p^*=1$ if $\sigma\geq 2$ and 
\neweq{pstar}
\frac{(N-\sigma)+(1-\sigma)\liminf\limits_{r\rightarrow 0}\gamma(r)}{N-2-\liminf\limits_{r\rightarrow 0}\gamma(r)}\leq p^*\leq
\frac{(N-\sigma)+(1-\sigma)\limsup\limits_{r\rightarrow 0}\gamma(r)}{N-2-\limsup\limits_{r\rightarrow 0}\gamma(r)}\;\mbox{ if }0\leq \sigma<2.
\endeq
\end{proposition}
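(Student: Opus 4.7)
The operator in \eqref{ser1} has matrix $\mathbf{a}(x) = \mathrm{I} + \gamma(|x|)\frac{x\otimes x}{|x|^2}$ with $\mathbf{b} \equiv 0$, which is exactly the Gilbarg--Serrin form of Lemma~\ref{computations} with $\beta \equiv 0$, and the weight $K(x) = |x|^{-\sigma}$ trivially satisfies \eqref{essentially}. The plan is therefore a direct application of Theorem~\ref{general}, using Lemma~\ref{computations} together with Remark~\ref{simpler_Psi} to convert the abstract estimates involving $\overline{\Psi}, \underline{\Psi}$ into the explicit formula \eqref{pstar}.

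First, Theorem~\ref{general} guarantees $p_* = -\infty$ and the existence of a critical exponent $p^* \in [1, \infty]$ in the stated dichotomy. Second, since $\underline\beta = \overline\beta = 0 \geq 1 - N$, Lemma~\ref{computations} gives
\[
\overline{\overline{\Psi}} = 1 + \frac{N-1}{1+\underline\gamma}, \qquad \underline{\underline{\Psi}} = 1 + \frac{N-1}{1+\overline\gamma},
\]
where, by continuity of $\gamma$ and Remark~\ref{remga}, $\underline\gamma = \liminf_{r\to 0}\gamma(r)$ and $\overline\gamma = \limsup_{r\to 0}\gamma(r)$. The assumption $\overline\gamma < N-2$ makes both denominators $N - 2 - \underline\gamma$ and $N - 2 - \overline\gamma$ strictly positive and, in particular, forces $\underline{\underline{\Psi}} > 2$.

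For $\sigma \geq 2$, the inequality $\underline{\Psi} \geq \underline{\underline{\Psi}} > 2$ places us in the second alternative of \eqref{ppqqw} of Theorem~\ref{general}, and so $p^* = 1$. For $0 \leq \sigma < 2$, I will verify the side condition $\lim_{\rho \to 0}\essinf_{r<\rho}\Env\Psi(r) > 2$: continuity of $\gamma$ yields $\Env\Psi(r) = 1 + \tfrac{N-1}{1+\gamma(r)}$ in a neighbourhood of $0$, and passing to the essential infimum as $\rho \to 0$ returns exactly $\underline{\underline{\Psi}} > 2$. The first alternative of \eqref{ppqqw} together with Remark~\ref{simpler_Psi} then provides
\[
1 + \frac{2-\sigma}{(\overline{\overline{\Psi}}-2)_+} \;\leq\; p^* \;\leq\; 1 + \frac{2-\sigma}{(\underline{\underline{\Psi}}-2)_+}.
\]
Substituting the envelope formulas and clearing denominators reduces both sides to the expressions in \eqref{pstar}; for instance,
\[
1 + \frac{(2-\sigma)(1+\underline\gamma)}{N-2-\underline\gamma} \;=\; \frac{(N-\sigma) + (1-\sigma)\underline\gamma}{N-2-\underline\gamma},
\]
and the corresponding identity with $\overline\gamma$ gives the upper bound.

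No substantive obstacle is expected: the argument is a direct substitution into the already-established Theorem~\ref{general}. The only point requiring care is the verification of the side condition $\lim_{\rho\to 0}\essinf_{r<\rho}\Env\Psi(r) > 2$ in the regime $0 \leq \sigma < 2$, which is immediate from the continuity of $\gamma$ combined with the standing hypothesis $\limsup_{r \to 0}\gamma(r) < N-2$.
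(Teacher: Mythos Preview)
Your proof is correct and follows exactly the approach the paper intends: the paper's own proof is the single line ``From Theorem~\ref{general} we obtain,'' and you have simply (and accurately) filled in the details of that application using Lemma~\ref{computations} and Remark~\ref{simpler_Psi}. The only cosmetic quibble is that the existence of $p^*$ with the stated dichotomy comes from the definition~\eqref{crit-exp} and the proposition preceding it rather than from Theorem~\ref{general} itself, but this does not affect the argument.
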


In the critical case we have:
\begin{proposition}\label{exs1b} Assume $\lim_{r\rightarrow 0}\gamma(r)=0$ and $0\leq \sigma<2$. Then:
\begin{itemize}
\item[(i)] $p^*=(N-\sigma)/(N-2)$ and \eq{ser1} has singular solutions for $p=p^*$ if and only if
\neweq{ser11}
\int_0^R{\rm exp}\left\{-\frac{(2-\sigma)(N-1)}{N-2}\int_r^R \frac{\gamma(t)}{t(1+\gamma(t))}dt\right\}\frac{dr}{r}<\infty.
\endeq

\item[(ii)] If $\gamma$ is differentiable on $(0,R)$ and there exists $c>0$ such that
\neweq{ser12}
\gamma(r)\leq cr\gamma'(r) \quad\mbox{ for all } 0<r<R,
\endeq
then \eq{ser1} has no singular solutions for the critical exponent $p=(N-\sigma)/(N-2)$.
\end{itemize}
\end{proposition}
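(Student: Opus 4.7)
The plan is to reduce both parts to Theorem \ref{critic} with $A = N$. From Lemma \ref{computations} applied with $\beta \equiv 0$ and $\gamma$ continuous satisfying $\gamma(r)\to 0$, one obtains $\Psi(x) = 1+(N-1)/(1+\gamma(|x|))$, which is continuous and tends to $N>2$ at the origin; hence $\env\Psi(r) = \Env\Psi(r) = \Psi(r)$ and $\underline{\Psi}=\overline{\Psi}=N$. Theorem \ref{general} then yields $p^* = 1 + (2-\sigma)/(N-2) = (N-\sigma)/(N-2)$ and gives that $\lim_{\rho\to 0}\essinf_{r<\rho}\Env\Psi(r)=N>2$. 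A direct integration produces
\[
\eenv(r) = \Eenv(r) = \Bigl(\tfrac{R}{r}\Bigr)^{N}\exp\Bigl\{-(N-1)\int_r^R \tfrac{\gamma(\tau)}{\tau(1+\gamma(\tau))}\,d\tau\Bigr\},
\]
so that in the setting of Theorem \ref{critic} one may take $h(r)=H(r)=R^N\exp\{-(N-1)\int_r^R \gamma(\tau)/(\tau(1+\gamma(\tau)))\,d\tau\}$.

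For part (i), Theorem \ref{critic}(i) and Theorem \ref{critic}(ii) yield, respectively, non-existence of a singular solution at $p^*$ when $\int_0^R h^{(2-\sigma)/(N-2)}(r)\,dr/r=\infty$ and existence when $\int_0^R H^{(2-\sigma)/(N-2)}(r)\,dr/r<\infty$. Since $h\equiv H$, these two integrals coincide with the integral in \eqref{ser11} up to the multiplicative constant $R^{N(2-\sigma)/(N-2)}$, which gives the claimed equivalence.

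For part (ii), the goal is to show that the hypothesis $\gamma(r)\leq cr\gamma'(r)$ forces the integral in \eqref{ser11} to diverge, whereupon Theorem \ref{critic}(i) produces the desired non-existence. The key observation is that this hypothesis is equivalent to $(r^{-1/c}\gamma(r))'\geq 0$, so $r\mapsto r^{-1/c}\gamma(r)$ is non-decreasing on $(0,R)$ and thus bounded above by $M_0:=R^{-1/c}\gamma(R)$. Setting $F(r):=\int_r^R \gamma(\tau)/(\tau(1+\gamma(\tau)))\,d\tau$, I split into two cases. If $M_0\leq 0$ then $\gamma\leq 0$ on $(0,R)$, the integrand of $F$ is non-positive, and $F(r)\leq 0$. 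If $M_0>0$ then $\gamma_+(r)\leq M_0\, r^{1/c}$ pointwise, and since $\gamma(r)\to0$ and $\gamma>-1$, the factor $1/(1+\gamma)$ is uniformly bounded near the origin, so
\[
F(r)\leq \int_r^R\frac{\gamma_+(\tau)}{\tau(1+\gamma(\tau))}\,d\tau\leq C\int_0^R \tau^{1/c-1}\,d\tau<\infty.
\]
In both cases $F$ is bounded above as $r\to 0$, so $\exp\{-kF(r)\}$ with $k:=(N-1)(2-\sigma)/(N-2)$ is bounded below by a positive constant, and the integral in \eqref{ser11} diverges. The only genuinely non-routine step is the reformulation of $\gamma\leq cr\gamma'$ as monotonicity of $r^{-1/c}\gamma(r)$, which immediately delivers the pointwise power-type bound on $\gamma_+$; all remaining estimates are elementary.
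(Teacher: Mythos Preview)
Your argument is correct. Part (i) follows the paper's approach: compute $\Psi$ via Lemma~\ref{computations}, observe that $\env\Psi=\Env\Psi$ since $\Psi$ is radial and continuous, identify $h=H$, and note that conditions \eqref{optimal} and \eqref{pessimal} in Theorem~\ref{critic} collapse to the single integral condition \eqref{ser11}.

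In part (ii) you reach the same conclusion as the paper but by a genuinely different elementary device. The paper divides \eqref{ser12} by $t(1+\gamma(t))$ and integrates directly:
\[
\int_r^R \frac{\gamma(t)}{t(1+\gamma(t))}\,dt \;\leq\; c\int_r^R \frac{\gamma'(t)}{1+\gamma(t)}\,dt \;=\; c\ln(1+\gamma(R)) - c\ln(1+\gamma(r)),
\]
which is bounded since $\gamma(r)\to0$; hence the integrand in \eqref{ser11} is bounded below by $C(1+\gamma(r))^{ck}/r$ and the integral diverges. You instead recast \eqref{ser12} as monotonicity of $r^{-1/c}\gamma(r)$, extract the pointwise bound $\gamma_+(r)\leq M_0\,r^{1/c}$, and bound $F(r)$ by $C\int_0^R\tau^{1/c-1}\,d\tau<\infty$. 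The paper's one-line integration is marginally shorter; your reformulation has the advantage of making explicit that \eqref{ser12} forces a power-type decay $\gamma(r)=O(r^{1/c})$, which connects transparently to the remark following the proposition that $\gamma(r)=r^\alpha$ always satisfies \eqref{ser12}.
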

\begin{proof}  (i) Since $\lim_{r\rightarrow 0}\gamma(r)=0$, from \eq{pstar} we have $p^*=(N-\sigma)/(N-2)$. Also
\[
\Psi(x)=N-\frac{(N-1)\gamma(|x|)}{1+\gamma(|x|)},\quad x\in B_R\setminus\{0\}.
\]
Condition \eq{ser11} is now a reformulation of \eq{optimal} and \eq{pessimal} with 
\[
h(r)=H(r)=R^N\exp\left\{  -(N-1)\int_r^R \frac{\gamma(t)}{1+\gamma(t)}dt\right\}.
\]

(ii) If $\gamma$ satisfies \eq{ser12} then the integral in \eq{ser11} is divergent since
\[
\int_r^R \frac{\gamma(t)}{t(1+\gamma(t))}dt\leq c\int_r^R \frac{\gamma'(t)}{1+\gamma(t)}dt=C(R)-c\ln(1+\gamma(r)),
\]
for all $0<r<R$. Thus,
\[
\int_0^R{\rm exp}\left\{-\frac{(2-\sigma)(N-1)}{N-2}\int_r^R \frac{\gamma(t)}{t(1+\gamma(t))}dt\right\}\frac{dr}{r}\geq C\int_0^R\frac{(1+\gamma(r))}{r}^{\frac{c(2-\sigma)(N-1)}{N-2}}dr=\infty.
\]
This concludes our proof.
\end{proof}

Let us remark that there are large classes of differentiable functions $\gamma$ satisfying \eq{ser12}. In particular for $\gamma(r)=r^\alpha$, $\alpha\geq 0$, inequality \eq{ser1} has no singular solutions in the critical case $p=(N-\sigma)/(N-2)$, $0\leq \sigma<2$.

We next consider a function $\gamma$ that fails to fulfill \eq{ser12}.

\begin{proposition}\label{exs2} Assume $0\leq \sigma<2$ and let $\gamma(r)=\ln^{-m}\frac{1}{r}$, $m>0$.
\begin{itemize}
\item[(i)] Inequality \eq{ser1} has singular solutions for all $p<(N-\sigma)/(N-2)$ and no singular solutions exist if $p>(N-\sigma)/(N-2)$.
\item[(ii)] If $p=(N-\sigma)/(N-2)$ then \eq{ser1} has singular solutions if and only if either $0<m< 1$ or $m=1$ and $0\leq \sigma<N/(N-1)$.
\end{itemize}
\end{proposition}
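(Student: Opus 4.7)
The plan is to apply Proposition \ref{exs1b} directly, since for every $m>0$ we have $\gamma(r)=\ln^{-m}(1/r)\to 0$ as $r\to 0$, so in particular $\limsup\gamma<N-2$ and the hypothesis of Proposition \ref{exs1b} is satisfied.

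For part (i), Proposition \ref{exs1b}(i) gives immediately $p^*=(N-\sigma)/(N-2)$, and then Theorem \ref{general} yields existence of singular solutions for $p<p^*$ and non-existence for $p>p^*$. So the content of the proposition reduces to deciding, in the critical case $p=p^*$, whether the integrability condition~\eqref{ser11} holds.

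For part (ii), I will evaluate
\[
I(r):=\int_r^R\frac{\gamma(t)}{t(1+\gamma(t))}\,dt
\]
asymptotically as $r\to 0$ via the substitution $s=\ln(1/t)$, which transforms it into
\[
I(r)=\int_{\ln(1/R)}^{\ln(1/r)}\frac{s^{-m}}{1+s^{-m}}\,ds.
\]
Since $s^{-m}/(1+s^{-m})\sim s^{-m}$ for large $s$, three regimes arise: if $m>1$ then $I(r)$ stays bounded; if $m=1$ then $I(r)\sim\ln\ln(1/r)$; and if $0<m<1$ then $I(r)\sim(1-m)^{-1}\ln^{1-m}(1/r)$. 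Substituting $s=\ln(1/r)$ once more in~\eqref{ser11}, and setting $A:=(2-\sigma)(N-1)/(N-2)$, the condition~\eqref{ser11} becomes convergence of $\int^{\infty}e^{-A\,J(s)}\,ds$ with $J(s)\simeq I(e^{-s})$.

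In the case $m>1$, $J$ is bounded, so the integrand stays bounded below by a positive constant and~\eqref{ser11} fails; no singular solutions exist. In the case $0<m<1$, $J(s)$ grows like a positive power of $s$, so the exponential decays faster than any polynomial and~\eqref{ser11} holds; singular solutions exist. The decisive case is $m=1$, where $e^{-AJ(s)}\sim s^{-A}$ and hence~\eqref{ser11} is equivalent to $A>1$, i.e.\ $(2-\sigma)(N-1)>N-2$, which rearranges exactly to $\sigma<N/(N-1)$. The main (minor) obstacle is just making these asymptotic equivalences precise enough to conclude both convergence and divergence in the borderline case $m=1$, which is handled by sandwiching $s^{-m}/(1+s^{-m})$ between $\tfrac12 s^{-m}$ and $s^{-m}$ for $s$ large and using monotonicity of the exponential.
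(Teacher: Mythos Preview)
Your approach is essentially the same as the paper's: reduce to Proposition~\ref{exs1b}(i) and then analyze the integral in~\eqref{ser11} via the substitution $s=\ln(1/t)$, treating the three regimes $m>1$, $m=1$, $0<m<1$ separately. Parts (i) and the cases $m\neq 1$ in (ii) are fine.

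There is, however, a genuine gap in your treatment of the borderline case $m=1$. The sandwich $\tfrac12 s^{-1}\le \dfrac{s^{-1}}{1+s^{-1}}\le s^{-1}$ (for $s\ge1$) only yields, after integration,
\[
\tfrac12\ln s + O(1)\ \le\ J(s)\ \le\ \ln s + O(1),
\]
so that $e^{-AJ(s)}$ is trapped between $c\,s^{-A}$ and $C\,s^{-A/2}$. This decides convergence of $\int^\infty e^{-AJ(s)}\,ds$ only when $A>2$ or $A\le 1$; for $1<A\le 2$ the two bounds disagree, and this range is exactly where the threshold $\sigma=N/(N-1)$ lies (e.g.\ for $\sigma=0$, $N=3$ one has $A=4$, but for $N$ large $A\to 2$, and as $\sigma$ increases $A$ drops below $2$). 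Hence the sandwich is too crude to identify the sharp threshold $A=1$.

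The fix is immediate: for $m=1$ one has $\dfrac{s^{-1}}{1+s^{-1}}=\dfrac{1}{1+s}$, which integrates exactly to $J(s)=\ln(1+s)+O(1)$, giving $e^{-AJ(s)}\asymp s^{-A}$ and the correct threshold $A>1$. This is precisely what the paper does.
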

\begin{proof}
(i) follows from the first part in Proposition \ref{exs1b}(i).

(ii) Without losing any generality we may assume $R<1/e$. We evaluate the integral in \eq{ser11}. 
If $m>1$ we have
\[
\int_r^R\frac{\gamma(t)}{t(1+\gamma(t))}dt \leq \int_r^R \frac{dt}{t\ln^m\frac{1}{t}}=C-\frac{1}{m-1}\ln^{1-m}\frac{1}{r}.
\]
Hence
\[
\int_0^R{\rm exp}\left\{-\frac{(2-\sigma)(N-1)}{N-2}\int_r^R \frac{\gamma(t)}{t(1+\gamma(t))}dt\right\}\frac{dr}{r}
\geq
C\int_0^R e^{c\ln^{1-m}\frac{1}{r}}\frac{dr}{r}=\infty,
\]
where $c=\frac{(2-\sigma)(N-1)}{(N-2)(m-1)}>0$. 

If $0<m< 1$ we have 
\[
\int_r^R\frac{\gamma(t)}{t(1+\gamma(t))}dt = \int^{\ln(1/r)}_{\ln (1/R)} \frac{ds}{1+s^m}\geq 
\int^{\ln(1/r)}_{\ln (1/R)} \frac{ds}{(1+s)^m}\geq \Big(1+\ln\frac{1}{r}\Big)^{1-m}-C(R).
\]
Hence
\[
\begin{split}
\int_0^R{\rm exp}\left\{-\frac{(2-\sigma)(N-1)}{N-2}\int_r^R \frac{\gamma(t)}{t(1+\gamma(t))}dt\right\}\frac{dr}{r}
& \leq
C\int_0^R e^{-c\big(1+\ln\frac{1}{r}\big)^{1-m}}\frac{dr}{r}\\
&= C\int_{1+\ln(1/R)}^\infty e^{-cs^{1-m}  }ds<\infty,
\end{split}
\]
where $c=\frac{(2-\sigma)(N-1)}{N-2}>0$. 
Finally, if $m=1$ we have
\[
\int_r^R\frac{\gamma(t)}{t(1+\gamma(t))}dt = \int_r^R \frac{dt}{t(1+\ln\frac{1}{t})}=\ln\Big(1+\ln\frac{1}{r}\Big)-C(R).
\]
Thus,
\[
\begin{split}
\int_0^R{\rm exp}\left\{-\frac{(2-\sigma)(N-1)}{N-2}\int_r^R \frac{\gamma(t)}{t(1+\gamma(t))}dt\right\}\frac{dr}{r}
& =
C\int_0^R\frac{\Big(1+\ln\frac{1}{r}\Big)}{r}^{-\frac{(2-\sigma)(N-1)}{N-2}}\!\!\!\!\!\!\!dr\\
& = C\int_{1+\ln(1/R)}^\infty s^{-\frac{(2-\sigma)(N-1)}{N-2}}ds.
\end{split}
\]
The integral in \eq{ser11} is finite if and only if $\frac{(2-\sigma)(N-1)}{N-2}>1$, that is $0\leq \sigma <N/(N-1)$. 
\end{proof}

The following result proves the sharpness of condition \eq{optimal} in Theorem \ref{critic}(i).

\begin{proposition}\label{exs3}
Assume $0\leq \sigma<2$ and let
\[
\gamma(r)=\frac{N-A-\varkappa\ln^{-1}\frac{1}{r}}{A-1-\varkappa\ln^{-1}\frac{1}{r}},
\]
where $A>2$ and $\varkappa>0$. Then \eq{ser1} has singular solutions in $B_{1/e}\setminus\{0\}$ for the critical exponent $p=(A-\sigma)/(A-2)$ if and only if $\varkappa>(A-2)/(2-\sigma)$, that is, if and only if \eq{optimal} holds.
\end{proposition}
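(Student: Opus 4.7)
The plan is to combine Lemma~\ref{computations} with an explicit evaluation of the envelope $m(r)$ and an application of Theorem~\ref{critic}. Since $\gamma$ is continuous and bounded on $(0,1/e)$ and $\beta\equiv 0$, Lemma~\ref{computations} gives $\Psi(x) = 1 + (N-1)/(1+\gamma(|x|))$. A short calculation shows $\gamma(r) \to (N-A)/(A-1)$, and hence $\Psi(r) \to A$ as $r \to 0$. Continuity of $\Psi$ yields $\Env\Psi = \env\Psi = \Psi$, so $\overline\Psi = \underline\Psi = A > 2$, and the extra hypothesis $\lim_{\rho\to 0}\essinf_{r<\rho}\Env\Psi(r) > 2$ is automatic. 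Theorem~\ref{general} then delivers $p^* = (A-\sigma)/(A-2)$, reducing the proposition to an analysis of the critical case $p = p^*$.

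The decisive step will be an algebraic simplification using the specific form of $\gamma$: one verifies
$$
1 + \gamma(r) \;=\; \frac{N-1}{A - 1 - \varkappa/\ln(1/r)},\qquad\text{so}\qquad
\Psi(r) \;=\; A - \frac{\varkappa}{\ln(1/r)}.
$$
The substitution $u = \ln(1/\tau)$ then evaluates the envelope integral exactly, and after exponentiating one finds
$$
m(r) \;=\; \Eenv(r) \;=\; \eenv(r) \;=\; C\,r^{-A}\,\ln^{-\varkappa}(1/r),\qquad r \in (0,1/e),
$$
for some constant $C>0$. Consequently, bound~\eqref{ggw} holds with $h(r) = H(r) = C\,\ln^{-\varkappa}(1/r)$.

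Finally, with $p = (A - \sigma)/(A - 2)$, the substitution $u = \ln(1/r)$ converts the integral in conditions~\eqref{optimal} and~\eqref{pessimal} into
$$
\int_0^{1/e} h^{(2-\sigma)/(A-2)}(r)\,\frac{dr}{r} \;=\; C'\int_1^\infty u^{-\varkappa(2-\sigma)/(A-2)}\,du,
$$
which converges precisely when $\varkappa > (A - 2)/(2 - \sigma)$. If $\varkappa \le (A-2)/(2-\sigma)$, the integral diverges, so~\eqref{optimal} holds and Theorem~\ref{critic}(i) rules out singular solutions at $p^*$. If $\varkappa > (A-2)/(2-\sigma)$, it converges, so~\eqref{pessimal} holds; since $\lim\essinf\Env\Psi > 2$ was already checked, Theorem~\ref{critic}(ii) produces a singular solution at $p^*$. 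This yields the claimed equivalence.

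The only non-routine ingredient is the simplification $\Psi(r) = A - \varkappa/\ln(1/r)$; once that identity is in hand, the remaining steps reduce to the substitution $u = \ln(1/r)$ and the elementary convergence test for $\int^{\infty} u^{-c}\,du$. This transparency is precisely what makes the example a sharp witness for the threshold in Theorem~\ref{critic}.
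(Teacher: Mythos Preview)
Your proof is correct and follows the paper's approach for the setup and the non-existence direction: both compute $\Psi(r)=A-\varkappa/\ln(1/r)$, deduce $\underline\Psi=\overline\Psi=A$, evaluate $m(r)=\Eenv(r)=Cr^{-A}\ln^{-\varkappa}(1/r)$, and invoke Theorem~\ref{critic}(i) when $\varkappa\le(A-2)/(2-\sigma)$ (your exponent $-\varkappa$ is the right one; the paper's displayed $|\ln r|^{\varkappa}$ is a typo).

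The only genuine difference is in the existence direction. When $\varkappa>(A-2)/(2-\sigma)$ you close the argument by checking \eqref{pessimal} and invoking Theorem~\ref{critic}(ii), which is perfectly valid. The paper instead writes down an explicit singular solution
\[
u(x)=c\,|x|^{2-A}\ln^{\frac{2-A}{2-\sigma}}\!\Big(\frac{1}{|x|}\Big)
\]
and verifies it directly for small $c>0$. The paper's route has the advantage of being self-contained: since the proposition is meant to demonstrate the sharpness of condition~\eqref{optimal} in Theorem~\ref{critic}(i), exhibiting a concrete solution shows that failure of \eqref{optimal} genuinely permits singular solutions, without circling back through the companion part of the same theorem. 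Your route is cleaner and more systematic, and it has the merit of simultaneously confirming that Theorem~\ref{critic}(ii) is applicable at the exact threshold. Either argument suffices.
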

\begin{proof}
Note first that
\[
\Psi(x)=\frac{N+\g(|x|)}{1+\g(|x|)}=A-\frac{\varkappa}{\ln \frac{1}{|x|}},
\]
and from \eq{doipsi} we have $\underline\Psi=\overline \Psi=A$. Further,  it is easy to  check that 
$m(r)=C(R)r^{-A}|\ln r|^{\varkappa}$ so that 
condition \eq{optimal} holds for $0<\varkappa\leq \frac{A-2}{2-\sigma}$ (see also \eq{parth}). By Theorem \ref{critic}(i), inequality \eq{ser1} has no singular solutions for the critical exponent $p=p^*=(A-\sigma)/(A-2)$. On the other hand, for all $\varkappa>\frac{A-2}{2-\sigma}$ the function
\[
u(x)=c|x|^{2-A}\ln^{\frac{2-A}{2-\sigma}} \Big(\frac{1}{|x|}\Big),
\]
is a solution of \eq{ser1} with $p=(A-\sigma)/(A-2)$ in $B_R\setminus\{0\}$ for suitable small constant $c>0$.
This proves the optimality of \eq{optimal} in Theorem \ref{critic}(i).
\end{proof}

\begin{example}\label{exb}
Consider the inequality
\neweq{inqb}
\Delta u-\frac{1}{2}\sum_{i,j=1}^N \frac{x_ix_j}{|x|^2}\frac{\partial ^2 u}{\partial x_i\partial x_j}+\beta(|x|)\sum_{i=1}^N\frac{x_i}{|x|^2}\frac{\partial u}{\partial x_i}\geq |x|^{-\sigma}u^p\quad\mbox{ in }B_R\setminus\{0\},\, \sigma\geq 0,
\endeq
where $\beta:(0,\infty)\rightarrow \R$ is continuous and $\lim_{r\rightarrow 0} \beta(r)=0$.

\begin{proposition}\label{peb} Assume $N\geq 2$. 
\begin{itemize}
\item[(i)]The inequality \eq{inqb} has singular solutions for all $p<1+(2-\sigma)_+/(2N-3)$ and no singular solutions exist if $p>1+(2-\sigma)_+/(2N-3)$.
\item[(ii)] Assume $0\leq \sigma<2$ and $p=1+(2-\sigma)/(2N-3)$. Then \eq{inqb} has singular solutions if and only if
\neweq{inqbl}
\int_0^R{\rm exp}\left\{\frac{2(2-\sigma)}{2N-3}\int_r^R \beta(t)\frac{dt}{t}\right\}\frac{dr}{r}<\infty.
\endeq
\end{itemize}
\end{proposition}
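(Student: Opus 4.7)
The plan is to recognize that \eqref{inqb} is an instance of the operators treated in Lemma~\ref{computations} with $\gamma\equiv -1/2$ and the prescribed radial $\beta(|x|)$, and then invoke Theorem~\ref{general} and Theorem~\ref{critic} after computing $\Psi$, $h$ and $H$ explicitly.

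First, since $\gamma\equiv-1/2$ gives $\underline\gamma=\overline\gamma=-1/2$ and $\beta(r)\to 0$ gives $\underline\beta=\overline\beta=0$ (with $0\ge 1-N$ as $N\ge 2$), formula \eqref{doipsi} yields
\[
\Psi(x)=1+\frac{N-1+\beta(|x|)}{1+(-1/2)}=2N-1+2\beta(|x|),\qquad x\in B_R\setminus\{0\}.
\]
Continuity of $\beta$ implies $\Env\Psi(r)=\env\Psi(r)=2N-1+2\beta(r)$, so $\overline\Psi=\underline\Psi=2N-1=:A$. Since $A\ge 3>2$ and $\beta(r)\to 0$, we have $\lim_{\rho\to 0}\essinf_{r<\rho}\Env\Psi(r)=A>2$, so the hypotheses of Theorem~\ref{general} apply and give $p_*=-\infty$ together with
\[
p^*=1+\frac{(2-\sigma)_+}{A-2}=1+\frac{(2-\sigma)_+}{2N-3},
\]
which proves part (i).

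For part (ii), assume $0\le\sigma<2$ and $p=p^*=1+(2-\sigma)/(2N-3)$. I would compute
\[
\Eenv(r)=\eenv(r)=\exp\!\left\{\int_r^R\frac{2N-1+2\beta(\tau)}{\tau}\,d\tau\right\}=\Big(\tfrac{R}{r}\Big)^{\!2N-1}\exp\!\left\{2\int_r^R\beta(\tau)\frac{d\tau}{\tau}\right\},
\]
so in the notation of \eqref{ggw} one may take
\[
h(r)=H(r)=R^{2N-1}\exp\!\left\{2\int_r^R\beta(\tau)\frac{d\tau}{\tau}\right\}.
\]
With $A-2=2N-3$, both conditions \eqref{optimal} and \eqref{pessimal} reduce, modulo the harmless multiplicative constant $R^{(2N-1)(2-\sigma)/(2N-3)}$, to the convergence/divergence of the single integral \eqref{inqbl}. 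Hence Theorem~\ref{critic}(i) yields non-existence of a singular solution at $p=p^*$ when \eqref{inqbl} fails (diverges), while Theorem~\ref{critic}(ii) yields existence when \eqref{inqbl} holds, noting that the additional hypothesis $\lim_{\rho\to 0}\essinf_{r<\rho}\Env\Psi(r)>2$ required there has already been verified.

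No substantial obstacle is anticipated: the proof is essentially bookkeeping of constants and exponents, once the matrix has been identified as belonging to the Gilbarg--Serrin class of Lemma~\ref{computations}. The only point requiring a brief remark is the identification $\Env\Psi=\env\Psi=2N-1+2\beta(\cdot)$, which uses the continuity of $\beta$ to replace $\esssup$/$\essinf$ limits by pointwise values, and the observation that $h=H$ there, so that \eqref{optimal} and \eqref{pessimal} become complementary conditions on the same integral \eqref{inqbl}.
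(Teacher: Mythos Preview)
Your proof is correct and follows exactly the approach of the paper: identify the operator as a Gilbarg--Serrin type via Lemma~\ref{computations} with $\gamma\equiv-\tfrac12$, compute $\Psi(x)=2N-1+2\beta(|x|)$ and $\overline\Psi=\underline\Psi=2N-1$, then invoke Theorems~\ref{general} and~\ref{critic}. You have supplied more of the bookkeeping detail (the explicit form of $h=H$ and the verification that $\lim_{\rho\to0}\essinf_{r<\rho}\Env\Psi(r)>2$), but the argument is identical in substance to the paper's.
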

\begin{proof}
With similar computations to those in Lemma \ref{computations} we find $\Psi(x)=2N-1+2\beta(|x|)$ and 
$\underline \Psi=\overline \Psi=2N-1$.  The conclusion follows now from Theorems \ref{general} and \ref{critic}.
\end{proof}

\end{example}

\section{Open Problems}

In this section we state some open problems that stem from our study of $(1)_p$.

\noindent{\bf Problem 1.} {\it Can similar results be obtained for more general elliptic operators?}

In other words, asume that the symmetric matrix ${\bf a}$ is only strictly elliptic, that is, \eq{elliptic} is replaces by 
\[
\nu(x)|\xi|^2 \le\sum_{i,j=1}^{N}a_{ij}(x)\xi_i\xi_j\le c\nu(x) |\xi|^2
\mbox{ for all }\xi\in\R^N,
\]
and the vector field ${\bf b}=(b_i(x))_{i=1}^N\in L^\infty_{\mathrm{loc}}(B_R)$ fulfills
\[
|b_i(x)|\le \frac{c\nu(x)}{|x|}\quad\mbox{  for almost all }x\in
B_R\setminus\{0\}\,, 1\le i\le N.
\]
Here $c>1$ is a constant and $\nu>0$ satisfies  $\nu\in L^\infty_{loc}(B_R\setminus\{0\})$.  If $\nu\in L^\infty(B_R)$ then we can use directly our arguments for the study of $(1)_p$. The problem remains open for $\nu\in  L^\infty_{loc}(B_R\setminus\{0\})$.

\medskip

\noindent{\bf Problem 2.}  {\it Is it true that $p^*=-\infty$ for all potentials $K\in L^\infty_{loc}(B_R\setminus\{0\})$ satisfying $\essinf K>0$?} 

This is indeed the case if the behavior of $K(x)$ is restricted by \eq{essentially} to a power-like potential. One may  wish to investigate the value of the lower critical exponent $p^*$ if $K(x)$ decays faster near the origin.

\appendix

\section{}
For $f\in L^\infty(0,R)$ define the following family of averaging operators:
\begin{equation}\label{Avgmu}
\begin{split}
\mathop{Avg_s}f(r) & :=|s|r^{-s}\int_r^{R}f(\tau)\tau^{s-1}d\tau, \quad s<0;
\\
\mathop{Avg_s}f(r) & :=sr^{-s}\int_0^rf(\tau)\tau^{s-1}d\tau, \quad s>0.
\end{split}
\end{equation}

\begin{proposition}
Let $f\in L^\infty(0,R)$, $s>0$ and $\sigma<0$. Then there exist bounded continuous functions
$g$ and $h$ such that, for $r\in(0,R)$,
\[
\int_r^{R}f(\rho)\tfrac{d\rho}\rho = g(r) + \int_r^{R}\mathop{Avg_s}f(\rho)\tfrac{d\rho}\rho
= h(r) + \int_r^{R}\mathop{Avg_\sigma}f(\rho)\tfrac{d\rho}\rho.
\]
\end{proposition}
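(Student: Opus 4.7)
The plan is to establish both identities by direct computation via Fubini's theorem, thereby identifying $g$ and $h$ explicitly as bounded multiples of the averaging operators themselves (plus a constant).

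First I would handle the case $s>0$. Starting from
\[
\int_r^{R}\mathop{Avg_s}f(\rho)\frac{d\rho}{\rho}
= \int_r^{R}\frac{s}{\rho^{s+1}}\int_0^{\rho}f(\tau)\tau^{s-1}d\tau\,d\rho,
\]
I would swap the order of integration. Splitting the inner region into $\{0<\tau\le r\}$ and $\{r<\tau\le\rho\le R\}$, and computing $\int_{\max(\tau,r)}^{R}s\rho^{-s-1}d\rho=\max(\tau,r)^{-s}-R^{-s}$, this yields
\[
\int_r^{R}\mathop{Avg_s}f(\rho)\frac{d\rho}{\rho}
= \int_r^{R}f(\tau)\frac{d\tau}{\tau}+\frac{1}{s}\mathop{Avg_s}f(r)-C_s,
\]
where $C_s:=R^{-s}\int_0^{R}f(\tau)\tau^{s-1}d\tau$ is a finite constant (since $s>0$ and $f\in L^\infty$). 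Rearranging gives the first identity with
\[
g(r):=C_s-\frac{1}{s}\mathop{Avg_s}f(r).
\]

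Next, for $\sigma<0$ I would perform the analogous Fubini swap:
\[
\int_r^{R}\mathop{Avg_\sigma}f(\rho)\frac{d\rho}{\rho}
= |\sigma|\int_r^{R}f(\tau)\tau^{\sigma-1}\int_r^{\tau}\rho^{-\sigma-1}d\rho\,d\tau.
\]
Since $-\sigma>0$ the inner integral equals $|\sigma|^{-1}\bigl(\tau^{-\sigma}-r^{-\sigma}\bigr)$, so the expression simplifies to
\[
\int_r^{R}f(\tau)\frac{d\tau}{\tau}-\frac{1}{|\sigma|}\mathop{Avg_\sigma}f(r).
\]
Thus the second identity holds with
\[
h(r):=\frac{1}{|\sigma|}\mathop{Avg_\sigma}f(r).
\]

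It then remains to verify that $g$ and $h$ are bounded and continuous on $(0,R)$. Continuity is automatic, since $r\mapsto \int_0^{r}f(\tau)\tau^{s-1}d\tau$ and $r\mapsto \int_r^{R}f(\tau)\tau^{\sigma-1}d\tau$ are absolutely continuous, and division by $r^{\pm s}$ or $r^{\pm\sigma}$ is smooth on $(0,R)$. For boundedness I would use
\[
|\mathop{Avg_s}f(r)|\le s r^{-s}\|f\|_\infty\!\int_0^{r}\!\tau^{s-1}d\tau=\|f\|_\infty,\qquad
|\mathop{Avg_\sigma}f(r)|\le\|f\|_\infty\bigl(1-(R/r)^\sigma\bigr)\le\|f\|_\infty,
\]
the second inequality using $\sigma<0$ and $r<R$. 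This completes the proof.

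The computation is entirely routine; the only subtle point is the careful splitting of the Fubini region in the $s>0$ case, where the inner integral in $\mathop{Avg_s}$ ranges over $(0,\rho)$ rather than $(\rho,R)$, so one has to handle the contribution from $\tau\in(0,r)$ separately and absorb it into the constant $C_s$.
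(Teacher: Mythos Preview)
Your proof is correct and arrives at exactly the same explicit formulas as the paper: $h(r)=\tfrac{1}{|\sigma|}\mathop{Avg_\sigma}f(r)$ and $g(r)=C_s-\tfrac{1}{s}\mathop{Avg_s}f(r)$. The paper obtains these via a one-line integration by parts (writing $f(\rho)\rho^{-1}=\rho^{-\sigma}\cdot f(\rho)\rho^{\sigma-1}$ and integrating the second factor), whereas you use Fubini's theorem; the two computations are equivalent here, and your version has the minor advantage of spelling out the boundedness and continuity of $g,h$ explicitly.
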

\begin{proof}
Indeed, by integration by parts,
\[
\int_r^{R}f(\rho)\tfrac{d\rho}\rho =
-\int_r^{R} \rho^{-\sigma} \frac{d}{d\rho} \left(\int_\rho^{R}f(\tau)\tau^{\sigma-1} d\tau\right)
= \tfrac1{|\sigma|}\mathop{Avg_\sigma}f(r) + \int_r^{R}\mathop{Avg_\sigma}f(\rho)\tfrac{d\rho}\rho.
\]
Similarly for $s$.
\end{proof}

\noindent{\bf Acknowledgement.} The first named author acknowledges the financial support from the Royal Irish Academy and from the Romanian Ministry of Education, Research, Youth and Sport (CNCSIS PCCE-55/2008).

\end{document}